\algrenewcommand\algorithmicrequire{\textbf{Input:}}
\algrenewcommand\algorithmicensure{\textbf{Output:}}
\newtheorem{theorem}{Theorem}[section]
\newtheorem{lemma}[theorem]{Lemma}
\newtheorem{proposition}[theorem]{Proposition}
\newtheorem{assumptions}[theorem]{Assumption}
\newtheorem{definition}[theorem]{Definition}
\newtheorem{remark}[theorem]{Remark}
\newtheorem{example}[theorem]{Example}
\newenvironment{proof}{\begin{trivlist}
		\item[\hskip\labelsep{\bf Proof.}]}{$\hfill\Box$\end{trivlist}}
\numberwithin{equation}{section}
\numberwithin{figure}{section}
\newcommand{\bsx}{{\boldsymbol{x}}}
\newcommand{\parame}{{\tau}}
\newcommand{\paramedet}{{\tau_0}}
\newcommand{\vxi}{\boldsymbol{\xi}}
\newcommand{\Pot}{{\Phi}}
\newcommand{\dHel}{\mathrm{d}_{\mathrm{Hel}}}
\newcommand{\smpl}{\mathrm{smp}}
\newcommand{\RB}{\mathrm{RB}}
\newcommand{\lin}{\mathrm{lin}}
\newcommand{\Nsto}{N_{\mathrm{sto}}}
\newcommand{\snap}{\mathrm{snap}}
\renewcommand{\hat}{\widehat}
\journal{Elsevier}
\begin{document}

\begin{frontmatter}



\title{Fast sampling of parameterised Gaussian random fields}

\author[Ma]{Jonas Latz}
\ead{jonas.latz@ma.tum.de}
\author[AI]{Marvin Eisenberger}
\ead{marvin.eisenberger@in.tum.de}
\author[Ma]{Elisabeth Ullmann}
\ead{elisabeth.ullmann@ma.tum.de}

\address[Ma]{Chair of Numerical Analysis, Department of Mathematics, Technical University of Munich, Boltzmannstr. 3, 85748 Garching b.M., Germany}
\address[AI]{Chair for Computer Vision and Artificial Intelligence, Department of Computer Science, Technical University of Munich, Boltzmannstr. 3, 85747 Garching b.M., Germany}

\begin{abstract}
Gaussian random fields are popular models for spatially varying uncertainties, arising for instance in geotechnical engineering, hydrology or image processing.
A Gaussian random field is fully characterised by its mean function and covariance operator.
In more complex models these can also be partially unknown.
In this case we need to handle a family of Gaussian random fields indexed with hyperparameters.
Sampling for a fixed configuration of hyperparameters is already very expensive due to the nonlocal nature of many classical covariance operators. 
Sampling from multiple configurations increases the total computational cost severely.
In this report we employ parameterised Karhunen-Lo\`eve expansions for sampling.
To reduce the cost we construct a reduced basis surrogate built from snapshots of Karhunen-Lo\`eve eigenvectors.
In particular, we consider Mat\'ern-type covariance operators with unknown correlation length and standard deviation.
We suggest a linearisation of the covariance function and describe the associated online-offline decomposition.
In numerical experiments we investigate the approximation error of the reduced eigenpairs.
As an application we consider forward uncertainty propagation and Bayesian inversion with an elliptic partial differential equation where the logarithm of the diffusion coefficient is a parameterised Gaussian random field.
In the Bayesian inverse problem we employ Markov chain Monte Carlo on the reduced space to generate samples from the posterior measure.
All numerical experiments are conducted in 2D physical space, with non-separable covariance operators, and finite element grids with $\sim 10^4$ degrees of freedom.
\end{abstract}

\begin{keyword}
Uncertainty Quantification \sep Bayesian Inverse Problem  \sep Reduced Basis Methods \sep Spatial Statistics \sep Karhunen-Lo\`{e}ve expansion
\MSC  65C05 \sep 65N21 \sep 65N25 \sep 60G60 \sep 62M40 \sep 62F15	
\end{keyword}

\end{frontmatter}

\section{Introduction} \label{Sec:Introduction}

Many mathematical models in science and engineering require input parameters which are often not fully known or are perturbed by observational noise.
In recent years it has become standard to incorporate the noise or lack of knowledge in a model by using uncertain inputs.
In this work we are interested in models based on partial differential equations (PDEs) where the inputs are spatially varying random functions.
These so called random fields are characterised by a probability measure on certain function spaces.

We consider two typical tasks in uncertainty quantification (UQ): $(i)$ the forward propagation of uncertainty (forward problem) \cite{Ghanem2017, Smith2014}, and $(ii)$ the (Bayesian) inverse problem \cite{Dashti2017,Stuart2010}.
In $(i)$ we study the impact of uncertain model inputs on the model outputs and quantities of interest.
The mathematical task is to derive the pushforward measure of the model input under the PDE solution operator.
In $(ii)$ we update a prior distribution of the random inputs using observations; this gives the posterior measure.
Mathematically, this amounts to deriving the conditional measure of the inputs given the observations using a suitable version of Bayes' formula.
Unfortunately, in most practical cases there are no analytical expressions for either the pushforward or the posterior measure.
We focus on sampling based measure approximations, specifically Monte Carlo (MC) for the forward problem, and Markov chain Monte Carlo (MCMC) for the Bayesian inverse problem.
Importantly, MC and MCMC require efficient sampling procedures for the random inputs.

In this work we consider Gaussian random fields which are popular models e.g. in hydrology.
We recall the following sampling approaches for Gaussian random fields.
Factorisation methods construct either a spectral or Cholesky decomposition of the covariance matrix.
The major computational bottleneck is the fact that the covariance operator is often nonlocal, and a discretisation will almost always result in a dense covariance matrix which is expensive to handle.
Circulant embedding \cite{ChanWood:1997, Dietrich1997, Graham2018} relies on structured spatial grids and stationary covariance operators.
In this case, the covariance matrix can be factorised using the Fast Fourier Transform.
Alternatively, we can approximate the covariance matrix by hierarchical matrices and low rank techniques, see e.g. \cite{Benner2018, ChenStein:2017, DElia2013, Feischl2018, Khoromskij2009}.
A pivoted Cholesky decomposition is studied in \cite{Harbrechtetal:2015}.
More recently, the so called SPDE-based sampling has been developed in the works \cite{BolinKirchner:2017, BKK, Osbornetal:2017a, Osb2018, Roininenetal:2018}.
The major idea is to generate samples of Gaussian fields with Mat\'ern covariance operators by solving a certain discretised fractional PDE with white noise forcing. 
The Karhunen-Lo\`eve (KL) expansion \cite{Karhunen1947, Loeve1978} is another popular approach for sampling Gaussian random fields, however, it also suffers from the nonlocal nature of many covariance operators.
See e.g. the works \cite{Betzetal:2014, Rizzi2018, Khoromskij2009, PraneshGhosh:2015, Saibabaetal:2016, SchwabTodor:2006, ZhengDai:2017} for the efficient computation of the KL expansion.

Gaussian random fields are completely characterised by the mean function and covariance operator, and are thus simple models of spatially varying functions.
They are also flexible; depending on the regularity of the covariance operator it is possible to generate realisations with different degrees of smoothness.
However, in some practical situations the full information on the covariance operator might not be available, e.g. the correlation length, smoothness, and point-wise variance of the random field are not known.
Of course, these model parameters can be fixed a priori.
However, the posterior measure of a Bayesian inverse problem is often very sensitive to prior information.
We illustrate this in the following simple example.

\begin{example} \label{Example_introductory}
We consider a Gaussian random field with exponential covariance operator on the unit square $D=(0,1)^2$ and correlation length $\ell = 0.5$.
The goal is to estimate the statistics of this field given 9 noisy point observations within the framework of Bayesian inversion (cf. \S\ref{Subsec_BIP}).
The noise is centered Gaussian with a noise level about 1\%.
In Figure~\ref{Fig_Intro_Bayes} we plot a realisation of the true field together with the posterior mean and variance associated with four prior fields with a different (fixed) correlation length each.
The posterior mean and variance have been computed analytically.
Note that this is possible since $\ell$ is fixed, the prior and noise are Gaussian, and the forward response operator is linear.

\end{example}
\begin{figure}
\includegraphics[scale=0.65]{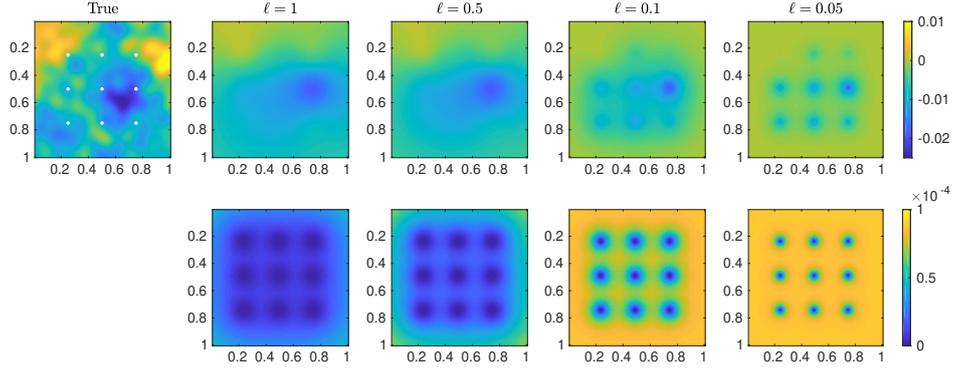}
\caption{Estimation of a Gaussian random field. 
The top-left figure shows a realisation of the true random field. 
The task is to estimate this field given 9 noisy point evaluations (white dots). 
The four top-right (bottom-right) figures show the posterior mean (pointwise posterior variance) for mean-zero Gaussian prior random fields with exponential covariance operator, standard deviation $\sigma=1$, and correlation lengths $\ell= 1, 0.5, 0.1, 0.05$.}
\label{Fig_Intro_Bayes}
\end{figure}

We make two observations in Figure~\ref{Fig_Intro_Bayes}.
First, we see that it is not possible to identify the true random field perfectly in this experiment.
This is due to the sparsity of the data; it is not a defect of the Bayesian inversion.
The posterior measure is well-defined, and has been computed analytically without a sample error in this experiment.

The second observation is the main motivation for our work.
We clearly see in Figure~\ref{Fig_Intro_Bayes} that the posterior measure depends crucially on the underlying prior measure and associated correlation length.
If the assumed correlation length is too small compared to the truth, then the posterior mean estimate is only accurate close to the observation locations.
If, on the other hand, the assumed correlation length is too large, we obtain an overconfident posterior mean estimate.
Inaccurate, fixed prior random field parameters can substantially deteriorate the estimation result in Bayesian inverse problems.
We treat this problem by modelling unknown \textit{hyperparameters} as random variables.

In statistics, a model with multiple levels of randomness is called a \textit{hierarchical model}.
In Bayesian statistics, this means that we work with parameterised prior measures (\textit{hyperpriors}).
Hierarchical models in forward and inverse UQ are discussed in \cite{Wikle2017}.
We also refer to \cite[Ch.10]{Robert2007} for general hierarchical Bayesian analyses.
Hierarchical Bayesian inverse problems with spatially varying random fields are discussed in \cite{BolinKirchner:2017, Dunlop2017, JiangOu:2017, Roininen2016, Sraj2016, TagadeChoi:2014}.
Hierachical models are also considered in the frequentist approach to inference, see e.g.  \cite{Haskard2007,Minden2017} for random field models and spatial statistics.

In our work we consider \textit{parameterised} Gaussian random fields where the covariance operator depends on random variables.
Notably, the resulting random field is not necessarily Gaussian, and we can thus model a larger class of spatial variations.
However, the greater flexibility of parameterised Gaussian fields brings new computational challenges as we explain next.

Assume that we discretise a Gaussian random field by a KL expansion.
The basis functions in this expansion are the eigenfunctions of the covariance operator.
For fixed, deterministic hyperparameters it is sufficient to compute the KL eigenpairs only once since the covariance operator is fixed.
However, changing the hyperparameters changes the covariance and often requires to re-compute the KL eigenpairs.
The associated computational cost and memory requirement scales at least quadratically in the number of spatial unknowns.
Hence it is often practically impossible to use uncertain hyperparameters in a (Gaussian) random field model in 2D or 3D physical space.
To overcome this limitation we suggest and study a reduced basis (RB) surrogate for the efficient computation of parameter dependent KL expansions.
In \cite{Sraj2016} the authors use a polynomial chaos surrogate for this task.
In contrast, our reduced basis surrogate approximates the KL eigenpairs by a linear combination of snapshot eigenpairs associated with certain fixed values of the hyperparameters.
Since this requires the solution of eigenproblems in a small number of unknowns (which is equal to the number of reduced basis functions) the computational cost can be reduced significantly.

Reduced basis methods were introduced in \cite{Noor1980}, and are typically used to solve PDEs for a large number of parameter configurations, see e.g. \cite{Book2, Book1}.
In contrast, parameterised eigenproblems have not been treated extensively with reduced basis ideas.
We refer to \cite{Fumagalli2016,Horger2016, Horger2017,Machiels2000,Sirkovic2016,Vallaghe2015} for applications and reviews of reduced basis surrogates for parameterised eigenproblems with PDEs.
For non-parametric KL eigenproblems we mention that reduced basis methods have been combined with domain decomposition ideas \cite{Rizzi2018}.
In this situation we need to solve several low-dimensional eigenproblems on the subdomains in the physical space.
It is then possible to construct an efficient reduced basis by combining the subdomain solutions, see \cite{Rizzi2018} for details.

The forward and inverse uncertainty quantification with PDE-based models typically requires many expensive model evaluations.
These evaluations are the basis but also the bottleneck in virtually any UQ method.
While we construct a surrogate to replace the expensive computation of KL eigenpairs, it is much more common in the UQ literature to construct surrogates for the underlying computational model.
See e.g. \cite{Farcas2018,Peherstorfer2016,Peherstorfer2018} 
for the forward problem and
\cite{Chen2015,Drohmann2015,Lieberman2010,Manzoni2016,
Mattis2018,Rubio2018,Stuart2018} for the Bayesian inverse problem.

We point out that many differential operators are local, and hence the associated discretised operator is often sparse.
Linear equations or eigenvalue problems with sparse coefficient matrices can often be solved with a cost that scales linearly in the number of unknowns.
In contrast, the reduced linear system matrix is often dense and the solution cost is at least quadratic in the number of unknowns.
Hence, for reduced basis methods to be cheaper compared to solves with the full discretised PDE operator it is necessary that the size of the reduced basis is not too large compared to the number of unknowns of the full system.
Notably, in most cases the discretised KL eigenproblem results in a dense matrix since the covariance integral operator is nonlocal.
Hence we expect a significant reduction of the total computational cost even if the size of the reduced basis is only slightly smaller than the number of unknowns in the unreduced eigenspace. 

The remainder of this report is organised as follows. 
In \S\ref{Sec:Background} we establish briefly the mathematical theory and computational framework for working with parameterised Gaussian random fields.
In \S\ref{Sec_RB_Param_EP} we propose and study a reduced basis surrogate for the parametric KL eigenpairs.
Specifically, we consider Mat\'ern-type covariance operators, and suggest a linearisation to enable an efficient offline-online decomposition within the reduced basis solver.
In \S\ref{Sec_RB_Sampling} we introduce \textit{reduced basis sampling}, and analyse its computational cost and memory requirements.
Finally, we present results of numerical experiments in \S\ref{Sec:Numerical_Experiments}.
We study the approximation accuracy and speed-up of the reduced basis surrogate, and illustrate the use of the reduced basis sampling for hierarchical forward and Bayesian inverse problems.

\section{Mathematical and computational framework}  \label{Sec:Background}
	To begin, we introduce the notation and some key elements, in particular, Gaussian and parameterised Gaussian measures.
Moreover, we discuss sampling strategies and their associated costs for Gaussian and parameterised Gaussian measures. 
We focus on samplers which make use of truncated KL expansions.

\subsection{Gaussian measures}
Let $(\Omega, \mathcal{A}, \mathbb{P})$ denote a probability space.
We recall the notion of a real-valued Gaussian random variable which induces a Gaussian measure on $\mathbb{R}$.
\begin{definition}
	The random variable $\vxi: \Omega \rightarrow \mathbb{R}$ follows a \textit{non-degenerate Gaussian measure}, if 
	\begin{equation*}
	\mathbb{P}(\vxi \leq \bsx) := \mathrm{N}(a, b^2)((-\infty, \bsx]) := \int_{-\infty}^{\bsx}\frac{1}{(2\pi)^{1/2}b} \exp\left(- \frac{(\bsx'-a)^2}{b^2}\right) \mathrm{d}\bsx', \ \ \ \bsx \in \mathbb{R},
	\end{equation*}
	for some $a \in \mathbb{R}$ and $b > 0$. 
	The Gaussian measure is \textit{degenerate}, if $b = 0$. 
	In this case, we define $\mathrm{N}(a, 0) := \delta_a$, the Dirac measure concentrated in $a$. 
	\end{definition}
Let $X$ denote a separable $\mathbb{R}$-Hilbert space with Borel-$\sigma$-algebra $\mathcal{B}X$. 
We now introduce Gaussian measures on $X$.
\begin{definition}\label{def:GRV}
The $X$-valued random variable $\theta: \Omega \rightarrow X$ has a Gaussian measure, if $T \theta$ follows a Gaussian measure for any $T \in X^*$ in the dual space of $X$. 
We write $\theta \sim \mathrm{N}(m, \mathcal{C})$, where
\begin{align*}
m &= \mathbb{E}[\theta] := \int \theta \mathrm{d}\mathbb{P}, \\
\mathcal{C} &= \mathrm{Cov}(\theta) := \mathbb{E}[(\theta-m) \otimes (\theta-m)].
\end{align*}
\end{definition}
In Definition~\ref{def:GRV} we distinguish two cases.
If $X$ is finite-dimensional, then we call $\theta$ a \textit{(multivariate) Gaussian random variable} with mean vector $m$ and covariance matrix $\mathcal{C}$.
If $X$ is infinite-dimensional, then $\theta$ is called \textit{Gaussian random field} with mean function $m$ and covariance operator $\mathcal{C}$. 

We now discuss two options to construct a Gaussian measure with given mean function $m \in X$ and covariance operator $\mathcal{C}: X \rightarrow X$.
While any $m \in X$ can be used as a mean function, we assume that $\mathcal{C}$ is a linear, continuous, trace-class, positive semidefinite and self-adjoint operator.
This ensures that $\mathcal{C}$ is a valid covariance operator, see \cite{Adler1990,Lifshits1995}.
We denote the set of valid covariance operators on $X$ by 
$\mathrm{CO}(X)$.

If $\dim X < \infty$, we can identify a Gaussian measure on $X$ in terms of a probability density function w.r.t. the Lebesgue measure.
\begin{proposition} \label{Propo_Gaussian_PDF}
Let $X := \mathbb{R}^N$, $m \in X$ and $\mathcal{C} \in \mathrm{CO}(X)$ with full rank. 
Then, the Gaussian measure can be written as
\begin{equation}
\mathrm{N}(m,\mathcal{C})(B) = \int_B \mathrm{n}(\theta; m, \mathcal{C}) \mathrm{d}\theta, \ \ \ B \in \mathcal{B}X
\end{equation}
where
\begin{equation}
\mathrm{n}(\theta; m,\mathcal{C})(B) = \det(2 \pi \mathcal{C})^{-1/2} \exp\left(-(1/2)\langle\theta-m, \mathcal{C}^{-1}(\theta-m)\rangle\right)
\end{equation}
is the associated probability density function.
\end{proposition}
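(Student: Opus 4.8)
The plan is to pin down the measure $\mathrm{N}(m,\mathcal{C})$ through its characteristic function and then to appeal to the injectivity of the Fourier transform on finite Borel measures on $\mathbb{R}^N$. First I would use that, for $X = \mathbb{R}^N$, every continuous linear functional $T \in X^*$ is represented by a unique vector $t \in \mathbb{R}^N$ through $T\theta = \langle t, \theta\rangle$. By Definition~\ref{def:GRV} the real random variable $\langle t, \theta\rangle$ is therefore Gaussian for every $t$, and a direct computation from the defining expressions for $m$ and $\mathcal{C}$ yields its mean and variance, namely $\mathbb{E}[\langle t,\theta\rangle] = \langle t, m\rangle$ and $\mathrm{Var}(\langle t,\theta\rangle) = \langle t, \mathcal{C}t\rangle$. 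Hence $\langle t,\theta\rangle \sim \mathrm{N}(\langle t,m\rangle, \langle t,\mathcal{C}t\rangle)$.

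Next I would evaluate the characteristic function of $\theta$. Inserting the argument $1$ into the one-dimensional Gaussian characteristic function of $\langle t,\theta\rangle$, I obtain
\begin{equation*}
\phi_\theta(t) = \mathbb{E}\left[\exp(i\langle t,\theta\rangle)\right] = \exp\left(i\langle t,m\rangle - \tfrac{1}{2}\langle t,\mathcal{C}t\rangle\right), \quad t \in \mathbb{R}^N.
\end{equation*}
In parallel I would compute the characteristic function of the candidate measure with density $\mathrm{n}(\cdot\,;m,\mathcal{C})$. Since $\mathcal{C}$ is symmetric positive definite with full rank, I can diagonalise $\mathcal{C} = Q\Lambda Q^\top$ with $Q$ orthogonal and $\Lambda$ diagonal with strictly positive entries; the substitution $\theta = m + Q\Lambda^{1/2}z$ decouples the integral into a product of one-dimensional Gaussian integrals, absorbs the normalising constant $\det(2\pi\mathcal{C})^{-1/2}$, and yields precisely the same expression $\exp(i\langle t,m\rangle - \tfrac{1}{2}\langle t,\mathcal{C}t\rangle)$.

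Since the two probability measures share the same characteristic function, the uniqueness theorem for characteristic functions forces them to coincide, which is the claimed identity. The full-rank hypothesis is essential and enters twice: it guarantees that $\mathcal{C}^{-1}$ in the density and $\det(\mathcal{C}) > 0$ in the normalisation are well-defined, and it is exactly the condition under which $\mathrm{N}(m,\mathcal{C})$ is absolutely continuous with respect to the Lebesgue measure (otherwise the measure concentrates on a proper affine subspace and no density exists). I expect the only delicate bookkeeping to be the Gaussian integral for the characteristic function of the candidate density: after the substitution one must complete the square in each decoupled coordinate and verify that the Jacobian $\det(\Lambda^{1/2}) = \det(\mathcal{C})^{1/2}$ cancels the normalising factor. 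Everything else follows directly from Definition~\ref{def:GRV} and Fourier uniqueness.
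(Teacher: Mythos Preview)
Your argument via characteristic functions is correct and is one of the standard proofs of this classical fact. Note, however, that the paper does not actually supply its own proof of this proposition: it is stated as a well-known result and left unproved. There is therefore nothing to compare your approach against; your characteristic-function route is a perfectly acceptable way to fill in the details.
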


If $\dim X = \infty$, we can construct a Gaussian measure on $X$ using the so-called \textit{Karhunen-Lo\`{e}ve (KL) expansion}.

\begin{definition} \label{Def_KL_exp} 
	Let $\dim X = \infty$ and let $(\lambda_i, \psi_i)_{i=1}^\infty$ denote the eigenpairs of $\mathcal{C}$, where $(\psi_i)_{i=1}^\infty$ form an orthonormal basis of $X$.
Let $\vxi: \Omega \rightarrow \mathbb{R}^{\infty}$ be a measurable function. 
Furthermore, let the components of $\vxi$ form a sequence  $(\xi_i)_{i=1}^\infty$  of independent and identically distributed (i.i.d.) random variables, where $\xi_1 \sim \mathrm{N}(0,1)$. 
Then, the expansion
\begin{equation*}
\theta_{\mathrm{KL}} :=
m + \sum_{i=1}^{\infty} \sqrt{\lambda_i} \xi_i \psi_i
\end{equation*}
is called KL expansion.  
\end{definition} 
One can easily verify the following proposition, see \cite{Karhunen1947,Loeve1978}.
\begin{proposition}
$\theta_{\mathrm{KL}}$ is distributed according to   $\mathrm{N}(m,\mathcal{C})$.
\end{proposition}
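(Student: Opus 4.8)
The plan is to verify the two defining requirements of Definition~\ref{def:GRV}: that $T\theta_{\mathrm{KL}}$ is real Gaussian for every $T \in X^*$, and that the resulting mean and covariance are exactly $m$ and $\mathcal{C}$. Before anything else I would establish that the defining series converges, since $\theta_{\mathrm{KL}}$ is only meaningful as a genuine $X$-valued random variable once convergence is secured. The natural mode is convergence in $L^2(\Omega; X)$: because $(\psi_i)_{i=1}^\infty$ is orthonormal and the $\xi_i$ are uncorrelated with unit variance, the tail of the partial sums satisfies $\mathbb{E}\bigl\| \sum_{i=n}^{p} \sqrt{\lambda_i}\,\xi_i \psi_i \bigr\|^2 = \sum_{i=n}^{p} \lambda_i$, and this vanishes as $n,p \to \infty$ precisely because $\mathcal{C}$ is trace-class, so that $\sum_i \lambda_i < \infty$. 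Hence the partial sums form a Cauchy sequence and $\theta_{\mathrm{KL}}$ is a well-defined element of $L^2(\Omega; X)$.

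For the Gaussianity I would use the Riesz representation to write any $T \in X^*$ as $T\theta = \langle h, \theta\rangle$ for some $h \in X$, so that $T\theta_{\mathrm{KL}} = \langle h, m\rangle + \sum_i \sqrt{\lambda_i}\,\xi_i \langle h, \psi_i\rangle$. Each partial sum is a finite linear combination of independent $\mathrm{N}(0,1)$ variables shifted by a constant, hence Gaussian; since Gaussianity is preserved under $L^2$ (equivalently, distributional) limits, the full sum $T\theta_{\mathrm{KL}}$ follows a Gaussian measure on $\mathbb{R}$. This confirms the first requirement of Definition~\ref{def:GRV}.

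It remains to identify the two parameters. The mean follows immediately from $\mathbb{E}[\xi_i]=0$ together with the $L^2$ convergence, which permits interchanging expectation and summation, giving $\mathbb{E}[\theta_{\mathrm{KL}}] = m$. For the covariance I would test against arbitrary $g, h \in X$ and use $\mathbb{E}[\xi_i\xi_j]=\delta_{ij}$ to obtain $\mathbb{E}[\langle g, \theta_{\mathrm{KL}}-m\rangle\,\langle h, \theta_{\mathrm{KL}}-m\rangle] = \sum_i \lambda_i \langle g,\psi_i\rangle\langle h,\psi_i\rangle$, which coincides with $\langle g, \mathcal{C}h\rangle$ by the spectral decomposition $\mathcal{C} = \sum_i \lambda_i\,\psi_i\otimes\psi_i$. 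Thus $\mathrm{Cov}(\theta_{\mathrm{KL}})=\mathcal{C}$. Since a Gaussian measure on a separable Hilbert space is uniquely determined by its mean and covariance, we conclude $\theta_{\mathrm{KL}}\sim\mathrm{N}(m,\mathcal{C})$.

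The main obstacle is the convergence and limit-interchange bookkeeping in infinite dimensions: the entire argument hinges on the trace-class assumption on $\mathcal{C}$, which is exactly what turns the formal sum into a convergent one and legitimises passing expectations through the series as well as Gaussianity through the limit. In finite dimensions these concerns evaporate, but here they are the crux, and I would make sure each interchange is justified by the $L^2(\Omega;X)$ convergence established at the outset.
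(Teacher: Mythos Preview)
Your argument is correct and complete: you establish $L^2(\Omega;X)$ convergence via the trace-class property, verify Gaussianity of every continuous linear functional through Riesz representation and closure under $L^2$ limits, and identify the mean and covariance by direct computation using the spectral decomposition of $\mathcal{C}$. The paper itself does not give a proof at all---it merely states that the result is easy to verify and cites the original references \cite{Karhunen1947,Loeve1978}---so your write-up is substantially more detailed than what appears in the text.
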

In the remainder of this paper we assume that all eigenpairs are ordered descendantly with respect to the absolute value of the associated eigenvalues.
For illustration purposes we give a example for a Gaussian measure on an infinite-dimensional, separable  Hilbert space.
\begin{example} \label{Exponential Covariance}
Let $X := \mathcal{L}^2(D; \mathbb{R})$, 
where $D \subseteq \mathbb{R}^d$ ($d = 1,2,3$) is open, bounded and connected. 
We define the \textit{exponential covariance operator} with correlation length $\ell > 0$ and standard deviation $\sigma > 0$ as follows,
\begin{align} \label{EQ_exponential_covariance}
\mathcal{C}_{\exp}^{(\ell, \sigma)}&: X \rightarrow X, \  \varphi \mapsto \int_D c_{\exp}^{(\ell, \sigma)}(\bsx, \cdot) \varphi(\bsx) \mathrm{d}\bsx, \\
c_{\exp}^{(\ell, \sigma)}&: D \times D \rightarrow \mathbb{R}, \ (\bsx,\bsx') \mapsto \sigma^2 \exp\left( -\ell^{-1}\mathrm{dist}(\bsx,\bsx') \right), \notag
\end{align}
where $\mathrm{dist}$ is the metric induced by the 2-norm.
\end{example}
We now briefly discuss the implications of $\ell$ and $\sigma$ on a (mean-zero) Gaussian measure with exponential covariance operator  $\mathrm{N}(0, \mathcal{C}_{\exp}^{(\ell, \sigma)})$ as in \eqref{EQ_exponential_covariance}.
The correlation length $\ell$ models the strength of the correlation between the random variables $\theta(\bsx)$ and $\theta(\bsx')$ in two points $\bsx, \bsx' \in D$ in the random field $\theta \sim \mathrm{N}(0, \mathcal{C}_{\exp}^{(\ell, \sigma)})$.
If $\ell$ is small, the marginals $\theta(\bsx)$ and $\theta(\bsx')$ are only strongly correlated if $\mathrm{dist}(\bsx,\bsx')$ is small. 
Otherwise, if $\ell$ is large, then $\theta(\bsx)$ and $\theta(\bsx')$ are  also strongly correlated, if $\mathrm{dist}(\bsx,\bsx')$ is large.
In Figure \ref{Fig_Various_Corr_len}, we plot samples of mean-zero Gaussian measures with exponential covariance and different correlation lengths.
The pointwise standard deviation $\sigma$ is constant for all $\bsx \in D$. 
One can show that the realisations of $\theta$ are a.s. continuous, independently of $\ell$ and $\sigma$.
\begin{figure}
\centering
\includegraphics[scale=0.9]{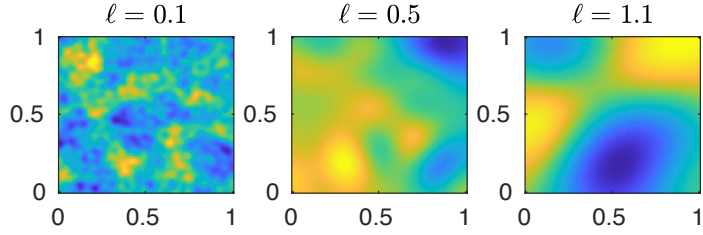}
\caption{Samples of mean-zero Gaussian random fields with exponential covariance, $\sigma = 1$ and $\ell \in \{0.1,0.5,1.1\}$.} \label{Fig_Various_Corr_len}
\end{figure}

\subsection{Parameterised Gaussian measures} \label{subs:ParamGaussMe}
We now construct parameterised measures.
We denote the space of parameters by $R \subseteq \mathbb{R}^{\mathrm{N}_R}$ and assume that $R$ is non-empty and finite-dimensional.
Moreover, we assume that $(R,\mathcal{R})$ is a measurable space.
The associated $\sigma$-algebra $\mathcal{R}$ is the trace-$\sigma$-algebra $\mathcal{R} := R \cap \mathcal{B}\mathbb{R}^{\mathrm{N}_R} := \{R \cap B : B \in\mathcal{B}\mathbb{R}^{\mathrm{N}_R}\}$.
In the following we denote an element of $R$ by $\paramedet$ and a random variable taking values in $R$ by $\parame: \Omega \rightarrow R$.
We recall the notion of a \textit{Markov kernel}; this is a measure-theoretically consistent object for the representation a parameterised probability measure.
\begin{definition}
A Markov kernel from $(R,\mathcal{R})$ to $(X, \mathcal{B}X)$ is a function $K: \mathcal{B}X \times R \rightarrow [0,1]$, where 
\begin{enumerate}
\item $K(B | \cdot ): R \rightarrow [0,1]$ is measurable for all $B \in \mathcal{B}X$,
\item $K(\cdot | \paramedet): \mathcal{B}X \rightarrow [0,1]$ is a probability measure for all $\paramedet \in R$.
\end{enumerate}
\end{definition}
We consider Markov kernels where $K(\cdot | \paramedet)$ is a  Gaussian measure for all $\paramedet \in R$. 
Hence, we define a parameterised Gaussian measure in terms of a Markov kernel from $(R, \mathcal{R})$ to $(X, \mathcal{B}X)$.  
Particularly, we define
\begin{equation} \label{Eq_param_Gaussian}
R \ni \paramedet \mapsto K(\cdot | {\paramedet}):=\mathrm{N}(m(\paramedet),\mathcal{C}(\paramedet)),
\end{equation}
where
\begin{align*}
m : R \rightarrow X, \ \  \mathcal{C} : R \rightarrow \mathrm{CO}(X)
\end{align*}
are measurable functions. 
The family of Gaussian measures in \eqref{Eq_param_Gaussian} does not define a valid parameterised measure yet, it remains to identify a measure on the parameter space $R$.
To this end let $\parame: \Omega \rightarrow R$ be an $R$-valued  random variable.
We assume that $\parame$ is distributed according to some probability measure $\mu'$.
Now, let $\theta: \Omega \rightarrow X$ be an $X$-valued random variable. 
We assume that  $\theta \sim K(\cdot | \parame).$
Then, the joint measure of $(\parame, \theta):  \Omega \rightarrow R \times X$ is given by
\begin{equation}\label{mu}
\mu:=  \mathcal{R} \otimes \mathcal{B}X \ni B \mapsto \iint_B K(\mathrm{d}\theta|\parame) \mu'(\mathrm{d}\parame) \in [0,1].
\end{equation}
The measure $\mu$ in \eqref{mu} models a two-step sampling mechanism.
Indeed, to generate a sample $(\parame, \theta) \in R \times X$ with distribution $\mu$ we proceed as follows:
\begin{enumerate}
\item Sample $\parame \sim \mu'$,
\item Sample $\theta \sim K(\cdot | \parame)$.
\end{enumerate}
Finally, let $\mu''$ be the marginal of $\mu$ w.r.t. to $\theta$, i.e., 
\begin{equation}\label{mu2}
\mu''(B_2) := \mu(R \times B_2), \qquad B_2 \in \mathcal{B}X.
\end{equation}
Alternatively, $\mu''$ can be defined in terms of the composition of $\mu'$ and $K$, $\mu'' := \mu'K$.
{Note that $\mu''$ is a measure on $X$, and that the Markov kernel $K$ is the parameterised Gaussian measure which we wanted to construct.}
\begin{remark}
We point out that even if $K(\cdot | \paramedet)$ is a Gaussian measure for any $\paramedet \in R$, the marginal $\mu''$ is not necessarily a Gaussian measure.
We give two examples.
\begin{itemize}
\item[(a)] Let $R := X := \mathbb{R}$, let $\mu'$ be a Gaussian measure and $K( \cdot | \tau_0) := \mathrm{N}(\tau_0, \sigma^2)$. 
Then $\mu''$ is a Gaussian measure.
{Indeed, this construction models a family of Gaussian random variables where the mean value is Gaussian.}
\item[(b)] Let $R$ be a finite set. 
Then $\mu''$ is called \textit{Gaussian mixture} and is typically not Gaussian. See \S 1.1 and \S 2.1 in \cite{Everitt1981}.
\end{itemize}
\end{remark}
Now, we return to Example \ref{Exponential Covariance} and construct an associated Markov kernel.
\begin{example} \label{Exponential Covariance_2}
We consider again the expo\-nen\-tial covariance operator in \eqref{EQ_exponential_covariance}.
Let $\underline{\ell} > 0$ and $\overline{\sigma} > \underline{\sigma}> 0$. 
For any $\ell \in [\underline{\ell}, \mathrm{diam}(D)]$ and $\sigma \in [\underline{\sigma}, \overline{\sigma}]$, one can show that $\mathcal{C}_{\exp}^{(\ell, \sigma)} \in \mathrm{CO}(X)$ is a valid covariance operator. 
The parameters $\tau = (\ell, \sigma)$ are random variables on a non-empty set $R := [\underline{\ell}, \mathrm{diam}(D)] \times [\underline{\sigma}, \overline{\sigma}]$. 
The associated probability measure $\mu'$ is given by 
\begin{equation*}
\mu' := \mu_\ell' \otimes \mu_\sigma'.
\end{equation*} 
Here, $\mu_\ell'$ is given such that $\ell^{-1} \sim \mathrm{Unif}[\mathrm{diam}(D)^{-1},\underline{\ell}^{-1}]$ and $\mu_\sigma' $ is a Gaussian measure that is truncated outside of $[\underline{\sigma}, \overline{\sigma}]$.
$\sigma$ models the standard deviation of $\theta(\bsx)$, for any $\bsx \in D$. 
The measure $\mu'$ and the Markov kernel $K(\cdot | \ell, \sigma) = \mathrm{N}(0, \mathcal{C}_{\exp}^{(\ell, \sigma)})$ induce a joint measure $\mu$. This can now be understood as follows:
\begin{enumerate}
\item Sample $\tau$ from $\mu'$:
\begin{itemize}
\item[(a)] Sample the correlation length $\ell \sim \mu'_\ell $,
\item[(b)] Sample the standard deviation $\sigma \sim \mu'_\sigma$.
\end{itemize}
\item Sample the random field $\theta \sim K(\cdot | \ell, \sigma)$ with exponential covariance operator, standard deviation $\sigma$ and correlation length $\ell$.
\end{enumerate}
Hence, we modelled a Gaussian random field with exponential covariance, where the correlation length and standard deviation are unknown.
\end{example}
In the following sections we study parameterised Gaussian measures in the setting of forward uncertainty propagation and Bayesian inversion.


\subsection{Sampling of Gaussian random fields}
\label{Subsec_Sampl_stra}
Consider a Gaussian random field $\mathrm{N}(m, \mathcal{C})$ on $(X, \mathcal{B}X)$.
For practical computations the infinite-dimensional parameter space $X$ must be discretised.
Here, we use finite elements.
Let $\mathrm{B}_h := (\varphi_i : i=1,\dots,N) \in X^{N}$ denote an $N$-dimensional basis of a finite element space.
We approximate $X$ by $X_h := \mathrm{span}(\mathrm{B}_h)$.
Note that we can identify $X_h \cong \mathbb{R}^N$.

Let $\langle \cdot, \cdot \rangle$ denote the Euclidean inner product on $\mathbb{R}^N$.
Note that $\mathbb{R}^N$ is a separable Hilbert space with inner product $\langle \cdot, \cdot \rangle_M= \langle \cdot,M \cdot \rangle$, where $M = \mathrm{B}_h^*\mathrm{B}_h$ is the Gramian matrix associated with the finite element basis $\mathrm{B}_h$.
The Gaussian measure $\mathrm{N}(m, \mathcal{C})$ on $(X, \mathcal{B}X)$ can then be represented on $\mathbb{R}^N$ by the measure $\mathrm{N}(\mathrm{B}_h^*m,\mathrm{B}_h^*\mathcal{C}\mathrm{B}_h)$ with mean vector $\mathrm{B}_h^*m$ and covariance matrix $\mathrm{B}_h^*\mathcal{C}\mathrm{B}_h$.

From now on assume that $X := \mathbb{R}^N$ is a finite dimensional space with inner product $\langle \cdot, \cdot \rangle_X$.
Moreover, we assume that the Gaussian measure $\mathrm{N}(m, \mathcal{C})$ has a mean equal to zero, that is $m\equiv 0$.
Note, that the covariance operator $\mathcal{C} \in \mathbb{R}^{N \times N}$ is now a matrix.
A simple sampling strategy uses the Cholesky decomposition $LL^*$ of $\mathcal{C}$.
Let $\vxi \sim \mathrm{N}(0,\mathrm{Id}_N)$. 
Then, it is easy to see that $L\vxi \sim \mathrm{N}(0,\mathcal{C})$.
The computational cost of a Cholesky decomposition is $O(N^3;N\rightarrow \infty)$.

Alternatively, we can use the KL expansion (see Definition \ref{Def_KL_exp})
\begin{equation*}
\sum_{i=1}^N \sqrt{\lambda_i} \xi_i \psi_i \sim \mathrm{N}(0, \mathcal{C}),
\end{equation*}
where $(\lambda_i, \psi_i)_{i=1}^N$ are the eigenpairs of $\mathcal{C}$.
Recall that the eigenvectors form an orthonormal basis of $X$ and that $\vxi \sim \mathrm{N}(0,\mathrm{Id}_N)$.
Computing the spectral decomposition of a symmetric matrix is typically more expensive compared to the Cholesky decomposition.
However, for some special cases the spectral decomposition can be computed cheaply.
One example is circulant embedding \cite{Dietrich1997} which requires a structured mesh, and a stationary covariance function.
Another example can be found in \cite{Dunlop2017} where the eigenpairs of the covariance operator are known analytically.

The KL expansion offers a natural way to reduce the stochastic dimension of an $X$-valued Gaussian random variable by simply truncating the expansion.
This can be interpreted as dimension reduction from a high-dimensional to a low-dimensional stochastic space.
A reduction from an infinite-dimensional to a finite-dimensional stochastic space is also possible.
Let $N_{\mathrm{sto}} \in \mathbb{N}, N_{\mathrm{sto}} < N$ and consider the truncated KL expansion
\begin{equation*}
\theta_{\mathrm{KL}}^{N_{\mathrm{sto}} }:= \sum_{i=1}^{N_{\mathrm{sto}}} \sqrt{\lambda_i} \xi_i \psi_i.
\end{equation*}
The sum of the remaining eigenvalues in the truncated KL expansion give the following $\mathcal{L}^2$ error bound:
\begin{equation}\label{L2errorKLE}
\mathbb{E}\left[\|\theta_{\mathrm{KL}} -  \theta_{\mathrm{KL}}^{N_{\mathrm{sto}} }\|_X^2 \right] = \sum_{i=N_{\mathrm{sto}} +1}^{N} \lambda_i.
\end{equation}
Furthermore, the truncated KL expansion solves the minimisation problem
\begin{equation*}
\min_{\hat{\theta} \in \mathcal{L}^2(\mathbb{R}^{N_{\mathrm{sto}}};X)}\mathbb{E}\left[\|\theta_{\mathrm{KL}} -  \hat{\theta}  \|_X^2 \right],
\end{equation*}
for any given $N_{\mathrm{sto}} \in \mathbb{N}$. 
Hence, the truncated KL expansion $\theta_{\mathrm{KL}}^{N_{\mathrm{sto}}}$ is the optimal $N_{\mathrm{sto}}$-dimensional function which approximates $\theta_{\mathrm{KL}}$.
In the statistics literature this method is called  \textit{principal component analysis}.

Observe that $\theta_{\mathrm{KL}}^{N_{\mathrm{sto}}}$ is a Gaussian random field on $X$ with covariance operator
\begin{equation} \label{EQ_Finite_Rank_Cov}
\mathcal{C}^{N_{\mathrm{sto}}} := \sum_{i=1}^{N_{\mathrm{sto}}} \lambda_i (\psi_i \otimes \psi_i).
\end{equation}
This covariance operator has the rank $\leq \Nsto < N$.
Sampling with the truncated KL expansion requires only the leading $N_{\mathrm{sto}}$ eigenpairs. 
We assume that this reduces the computational cost asymptotically to $O(N^2 \Nsto;N\rightarrow \infty)$.
We discuss this in more detail in \S\ref{Subsec_Comp_Cost}.
In the remainder of this report, we generate samples with the truncated KL expansion.

\subsection{Sampling of parameterised Gaussian random fields} \label{Subsec_Comp_Cost}

We use sample-based techniques to approximate the pushforward and posterior measure in forward propagation of uncertainty problems and Bayesian inverse problem, respectively.
To this end we require samples $(\parame_1, \theta_1),\dots,(\parame_{N_{\smpl}},\theta_{N_{\smpl}}) \sim \mu$.
We assume that sampling from $\mu'$ is accessible and inexpensive.
However, for each sample $\parame_n \sim \mu'$ we also need to sample $\theta_n \sim \mu_0(\cdot | \parame_n)$ using the truncated KL expansion.
This requires the assembly of the (dense) covariance matrix $\mathcal{C}(\parame_n)$, and the computation of its leading $\Nsto$ eigenpairs.
We abbreviate this process by the function $\mathrm{eigs}(\mathcal{C}(\parame_n), \Nsto)$ which returns $\Psi := (\lambda_i^{1/2}(\parame_n)\psi_i(\parame_n))_{i = 1}^{\Nsto}$.
The complete sampling procedure is given in Algorithm~\ref{Alg_SMK}.

The cost for the assembly of the covariance matrix is of order $O(N^2; N\rightarrow \infty)$.
We assume that the cost of a single function call $\mathrm{eigs}(\cdot, \Nsto)$ is of order $O(N^2 \cdot \Nsto;N \rightarrow \infty)$.
This corresponds to an \textit{Implictly Restarted Lanczos Method}, where $p = O(\Nsto)$.
See \cite{Calvetti1994,GuRuhe2000} for details.
Also note that this method is implemented in \textsc{ARPACK} (and thus for instance in \textsc{Matlab}) as \texttt{eigs}. 
Thus, the total computational cost of Algorithm~\ref{Alg_SMK} is $O(N_{\mathrm{smp}}\cdot (N^2 \cdot (\Nsto + 1));N \rightarrow \infty)$. 
The largest contribution to the computational cost is the repeated computation of the leading eigenpairs of $\mathcal{C}(\parame_n)$.
As mentioned before, we can avoid this cost in certain special cases, e.g. when considering a covariance operator that allows for circulant embedding or that has known eigenpairs.
However, in this paper we focus on parameterised covariance operators where the full eigenproblems have to be solved numerically for all parameter values.
\begin{algorithm}[htb]

 \For{$n \in \{1,...,N_{\mathrm{smp}}\}$}{
 Sample $\parame_n \sim \mu'$
 
$\Psi_n \leftarrow  \mathrm{eigs}(\mathcal{C}(\parame_n), \Nsto$)
 
 Sample $\vxi \sim \mathrm{N}(0, \mathrm{Id}_{\Nsto})$
 
 $\theta_n \leftarrow m_0(\parame_n) + \Psi_n\vxi$
 
 }
 \caption{Sampling from parameterised Gaussian measure $\mu$}
 \label{Alg_SMK}
\end{algorithm}

We conclude this section by discussing the choice of $\Nsto$ for parameterised Gaussian random fields. 
In \S \ref{Subsec_Sampl_stra} we study the truncation error of the Karhunen-Lo\`eve expansion of a Gaussian random field.
We now extend this study to parameterised Gaussian random fields.
Let $$\theta_{\mathrm{KL}}^{N_{\mathrm{sto}} }:= \sum_{i=1}^{N_{\mathrm{sto}}} \sqrt{\lambda_i(\tau)} \xi_i \psi_i(\tau),$$
where $\tau \sim \mu'$ and $\vxi \sim \mathrm{N}(0, \mathrm{Id}_{\Nsto})$.
Note that $\theta_{\mathrm{KL}}^{N_{\mathrm{sto}} }$ is an approximation to the parameterised Gaussian random field $\theta \sim \mu''$.
The mean square error of $\theta$ and $\theta_{\mathrm{KL}}^{N_{\mathrm{sto}} }$ can be computed as follows:
\begin{align*}
\mathbb{E}\left[\|\theta -  \theta_{\mathrm{KL}}^{N_{\mathrm{sto}} }\|_X^2 \right] &= \iint_{R \times \mathbb{R}^{\Nsto}} \left(\sum_{i=\Nsto +1}^N \sqrt{\lambda_i(\tau)} \xi_i \psi_i(\tau) \right)^2 \mathrm{N}(0,\mathrm{Id}_{\Nsto})(\mathrm{d}\vxi) \mu'(\mathrm{d}\tau) \\ &= \int_R \sum_{i=N_{\mathrm{sto}} +1}^{N} \lambda_i(\tau) \mu'(\mathrm{d}\tau).
\end{align*}
For Gaussian random fields $\Nsto$ is typically chosen such that the root mean square error fulfills a certain threshold. 
For example,
\begin{align*}
\Nsto := \min \left\lbrace N'=1,\dots,N : \sum_{i=1}^{N'} \lambda_i \geq A \cdot \sum_{i=1}^{N} \lambda_i \right\rbrace
\end{align*} 
where $A$ is a fixed factor.
Looking at the error bound in \eqref{L2errorKLE} we see that $A$ determines which amount of the total variance of the exact (Gaussian) random field is captured by the truncated KL expansion.
The same strategy can be applied for parameterised Gaussian random fields. 
Let \begin{align*}
\Nsto^\mathrm{all} := \min \left\lbrace N'=1,\dots,N : \sum_{i=1}^{N'} \lambda_i(\tau_0) \geq A \cdot \sum_{i=1}^{N} \lambda_i(\tau_0) \quad (\tau_0 \in R)\right\rbrace
\end{align*} 
be the number of terms that fulfils the threshold $A$ for all parameters $\tau_0 \in R$.
Then, the mean square error is bounded by
\begin{equation}\label{mse}
\mathbb{E}\left[\|\theta -  \theta_{\mathrm{KL}}^{N_{\mathrm{sto}}^\mathrm{all}}\|_X^2 \right] \leq (1-A) \cdot \mathbb{E}\left[\|\theta\|_X^2\right].
\end{equation}
Alternatively, it is possible to choose $\Nsto$ individually for each $\tau_0 \in R$,
\begin{align*}
\Nsto^{\tau_0} := \min \left\lbrace N'=1,\dots,N : \sum_{i=1}^{N'} \lambda_i(\tau_0) \geq A \cdot \sum_{i=1}^{N} \lambda_i(\tau_0) \quad\right\rbrace.
\end{align*} 
This gives the truncated representation 
$$\theta_{\mathrm{KL}}^{N_{\mathrm{sto}}^{\tau} }:= \sum_{i=1}^{N_{\mathrm{sto}}^{\tau}} \sqrt{\lambda_i(\tau)} \xi_i \psi_i(\tau) \quad (\vxi \sim \mathrm{N}(0, \mathrm{Id}_{\Nsto}), \tau \sim \mu').$$
Clearly, the mean square error of this expansion fulfils the exact same error bound as in \eqref{mse}.
However, the total number of terms in the expansion for a fixed parameter value $\tau$ might be smaller.
Recall that the cost of the sampling depends (linearly) on the number of KL terms. 
Observe that $\Nsto^{\mathrm{all}}$ is a sharp upper bound for $\Nsto^{\tau_0}$, $\tau_0 \in R$.
Hence, using $\Nsto^{\tau_0}$ is overall not more expensive than using $\Nsto^\mathrm{all}$, and the truncated expansion satisfies the same error bound.
Moreover, the numbers $(\Nsto^{\tau_0})_{\tau_0 \in R}$ are a priori unknown and have to be computed. 
To avoid this additional cost and to simplify the following discussion, we use $\Nsto := \Nsto^{\mathrm{all}}$ independently of $\tau_0 \in R$.

\section{Reduced basis approach to parameterised eigenproblems} \label{Sec_RB_Param_EP}
In \S\ref{Subsec_Comp_Cost} we discuss the sampling of parameterised Gaussian random fields.
The largest contribution to the computational cost is the repeated computation of eigenpairs of the associated parameterised covariance matrix $\mathcal{C}(\paramedet)$ for multiple parameter values $\paramedet \in R$.
Reduced basis (RB) methods construct a low-dimensional trial space for a family of parameterised eigenproblems.
This is the cornerstone in our fast sampling algorithm.
To begin we explain the basic idea behind reduced basis (RB) approaches for eigenproblems.
There are many options to construct a reduced basis, such as the proper orthogonal decomposition (POD), as well as single- and multi-choice greedy approaches.
POD and greedy approaches for parameterised eigenproblems are discussed and compared in \cite{Horger2016}.
In this paper we focus on the POD.

RB algorithms have two parts, an offline or preprocessing phase and an online phase.
The offline phase consists of the construction of the reduced basis.
In the online phase the reduced basis is used to solve a large number of low-dimensional eigenproblems.
Finally, in \S\ref{Subsec:Matern}, we discuss the offline-online decomposition for Mat\'ern-type covariance operators.
To be able to implement the RB approach efficiently we approximate the full covariance operators by linearly separable operators.
We explain how this can be done, and analyse the proposed class of approximate covariance operators.

\subsection{Basic idea} \label{Subsec_Basic_Idea_RB}
Let $\mathcal{C}: R \rightarrow \mathrm{CO}(X)$ be a measurable map, where $(X, \langle \cdot, \cdot \rangle_X) := (\mathbb{R}^N, \langle \cdot, \cdot \rangle_M)$ is a finite-dimensional space arising from the discretisation of an infinite-dimensional function space (see  \S \ref{Subsec_Sampl_stra}). 
Recall that in Algorithm \ref{Alg_SMK} we need to solve the generalised eigenproblem associated with $\mathcal{C}(\paramedet)$ for multiple parameter values $\paramedet \in R$. 
That is, we want to find $(\lambda_i(\paramedet), \psi_i(\paramedet))_{i = 1}^{N_{\mathrm{sto}}} \in (\mathbb{R} \times X)^{N_{\mathrm{sto}}}$, such that
\begin{equation}\label{fullEVP}
\mathcal{C}(\paramedet) \psi_i(\paramedet) = \lambda_i(\paramedet) M\psi_i(\paramedet).
\end{equation}
$X$ is in general high-dimensional, which results in a large computational cost for solving the eigenproblems.
However, it is often not necessary to consider the full space $X$.
If we assume that the eigenpairs corresponding to different parameter values are closely related, then the space
\begin{equation*}
\mathrm{span}\{\psi_i(\paramedet) : i=1,\dots,N_{\mathrm{sto}}, \paramedet \in R\} \subseteq X
\end{equation*}
can be approximated by a low dimensional subspace $X_{\RB}$, where $N_{\RB} := \dim X_{\RB} \ll N$.
We point out that the truncated KL expansion requires $\Nsto$ eigenpairs by assumption.
However, the reduced operators are ${N_{\RB} \times N_{\RB}}$ matrices with $N_{\RB}$ eigenpairs. 
Hence, $N_{\RB} \geq \Nsto$ is required.

Now, let $W \in X_{\RB}^{N_{\RB}} $ be an orthonormal basis of $X_{\RB}$ with respect to the inner product $\langle \cdot , \cdot \rangle_{X_{\RB}} := \langle \cdot , \cdot \rangle_{X} := \langle \cdot,  \cdot \rangle_M$.
$W$ is called \textit{reduced basis} and $X_{\RB}$ is called \textit{reduced space}.
We can represent any function $\psi(\paramedet) \in X_{\RB}$ by a coefficient vector $w(\paramedet) \in \mathbb{R}^{N_{\RB}}$, such that $\psi(\paramedet) = Ww(\paramedet)$.
The \textit{reduced eigenproblem} is obtained by a Galerkin projection of the full eigenproblem in \eqref{fullEVP}, and is again a generalised eigenproblem.
The task is to find $(\lambda_i^\RB(\paramedet), w_i^\RB(\paramedet))_{i = 1}^{\Nsto} \in (\mathbb{R} \times\mathbb{R}^{N_{\RB}})^{\Nsto}$, such that
\begin{equation}  \label{EQ_rhoRBEP}
\mathcal{C}^{\RB}(\paramedet) w_i(\paramedet) = \lambda^{\RB}_i(\paramedet) M^{\RB} w_i(\paramedet),\ \ \  i = 1,\dots,\Nsto.
\end{equation}
In \eqref{EQ_rhoRBEP} we have the \textit{reduced operator} $\mathcal{C}^{\RB}(\paramedet)  := W^*\mathcal{C}(\paramedet)W$, and the \textit{reduced Gramian matrix} $M^{\RB}  := W^* M W$, that are both  ${N_{\RB}\times N_{\RB}}$ matrices.
The eigenvector approximation in $X_{\RB}$ can then be obtained by $$\psi_i^\RB(\paramedet) = Ww_i(\paramedet), \qquad i = 1,\dots, \Nsto.$$

\subsection{Offline-online decomposition} \label{Subsec:Offline_Online}
A reduced basis method typically has two phases.
In the offline phase the reduced basis $W$ is constructed. 
In the online phase the reduced operator $\mathcal{C}^{\RB}(\paramedet)$  is assembled, and the reduced eigenproblem \eqref{EQ_rhoRBEP} is solved for selected parameter values $\paramedet \in R$.
To be able to shift a large part of the computational cost from the online to the offline phase we assume that the following offline-online decomposition is available for the family of parameterised covariance operators.
\begin{assumptions} \label{Ass_Linearly_Separable}
Let $N_{\mathrm{lin}} \in \mathbb{N}$. 
We assume that there are functions $F_k: R \rightarrow \mathbb{R}$ and linear operators $\mathcal{C}_k$,  $k = 1,\dots,N_{\mathrm{lin}}$, such that
\begin{equation*}
\mathcal{C}(\paramedet) = \sum_{k=1}^{N_{\mathrm{lin}}}F_k(\paramedet)\mathcal{C}_k, \ \ \  \paramedet \in R.
\end{equation*}
In this case, $\mathcal{C}(\paramedet)$ is called a \textit{linearly separable} operator.
\end{assumptions}

\subsubsection{Offline phase} \label{Subsubsec_Offline_Phase}
We use snapshots of the full eigenvectors to construct the reduced basis.
Meaning that we choose a vector $\parame^{\snap} \in R^{N_\snap}$ and solve the full eigenproblem \eqref{fullEVP} for all elements of $\parame^{\snap}$. 
We then have
\begin{equation*}
W_\snap = (\psi_i(\parame^\snap_s) : s = 1,\dots,N_\snap, i = 1,\dots,\Nsto),
\end{equation*}
where all of the computed eigenfunctions are included and here represented as column vectors. Hence, we obtain a matrix $W_\snap \in \mathbb{R}^{N \times \Nsto N_\snap}$.
Moreover, we define the reduced space $X_\RB :=\mathrm{span}(W_\snap)$.
Next, we construct an orthonormal basis for this vector space. 
One option to do this is the \textit{proper orthogonal decomposition}.
As result of the POD we obtain a spectral decomposition of $W_\snap^{(2)}:=W_\snap W_\snap^*$ of the form
\begin{equation*}
W_\snap^{(2)} = Q \Lambda Q^{*},
\end{equation*}
where $\Lambda := \mathrm{diag}(\lambda_1^{\snap},\dots,\lambda_N^{\snap})$ is a diagonal matrix containing the eigenvalues of $W_\snap^{(2)}$ and each  column of $Q$ contains the associated orthonormal eigenvectors. 
We use the eigenvectors with non-zero eigenvalues as basis vectors of $X_\RB$, that is,
\begin{equation*}
W := (Q_{\cdot,i} : \lambda_i^{\snap} > 0, i=1,\dots,N).
\end{equation*}
The magnitude of the eigenvalues of $W_\snap^{(2)}$ is an indicator for the error when the corresponding eigenvectors are not included in $W$.
See the discussion in \S \ref{Subsec_Sampl_stra}.
Neglecting reduced basis vectors however is beneficial due to the smaller dimension of the reduced basis.
Depending on the pay-off of the dimension reduction compared to the approximation accuracy of the reduced basis one can choose a threshold $\underline{\lambda} > 0$ and work with the basis 
\begin{equation*}
W := (Q_{\cdot,i} : \lambda_i^{\snap} > \underline{\lambda}, i=1,\dots,N).
\end{equation*}
In this case, we redefine  $X_\RB :=\mathrm{span}(W)$.
\begin{remark}
In this paper we compute the singular value decomposition (SVD) of $W_\snap$ instead of the spectral decomposition of $W_\snap^{(2)}$.
This is possible since the squares of the singular values of $W_\snap$ are identical to the eigenvalues of $W_\snap^{(2)}$.
\end{remark}

There are many options to choose the snapshot parameter values $\parame^{\snap}$. 
In our applications $\parame$ is a random variable with probability measure $\mu'$.
Hence, a straightforward method is to sample independently $\parame^{\snap}_s \sim \mu'$ $(s = 1,\dots,N_\snap)$.
Alternatively, one can select deterministic points in $R$, e.g. quadrature nodes.
We will come back to this question in \S \ref{Sec:Numerical_Experiments} where we discuss the numerical experiments.
Furthermore, note that it is generally possible to use different reduced bases $W_1, W_2,\dots$ for different subsets $R_1, R_2,\dots \subseteq R$ of hyperparameters and/or index sets $I_1,I_2,\dots \subseteq \{1,\dots,\Nsto\}$  of eigenpairs.

\subsubsection{Online phase}
In the online phase we iterate over various hyperparameter values $\paramedet \in R$. 
In every step, we assemble the operator $\mathcal{C}^\RB(\paramedet)$, and then we solve the associated eigenproblem \eqref{EQ_rhoRBEP}.
By Assumption \ref{Ass_Linearly_Separable} it holds $$\mathcal{C}(\paramedet)=\sum_{k=1}^{N_\lin}F_k(\paramedet)\mathcal{C}_k.$$
Hence, the reduced operator can be assembled efficiently as follows,
\begin{align*}
\mathcal{C}^\RB(\parame) = W^*\sum_{k=1}^{N_\lin}F_m(\parame)\mathcal{C}_kW = \sum_{k=1}^{N_\lin}F_k(\parame)W^*\mathcal{C}_kW = \sum_{k=1}^{N_\lin}F_k(\parame)\mathcal{C}_k^\RB.
\end{align*}
The reduced operators $\mathcal{C}_k^\RB$ $(k = 1,\dots,N_\lin)$ can be computed in the offline phase and stored in the memory.
In the online phase, we then only need to compute a certain linear combination of $(\mathcal{C}_k^\RB)_{k = 1}^{N_\lin}$.
This reduces the computational cost of the assembly of the reduced operator significantly.
After the assembly step we solve the reduced eigenproblem \eqref{EQ_rhoRBEP} to obtain the eigenfunctions $\psi_i^\RB(\paramedet):=Ww_i(\paramedet) \in X$ and eigenvalues $\lambda_i^\RB(\parame), i = 1,\dots,\Nsto$.

\subsection{Mat\'ern-type covariance operators} \label{Subsec:Matern}
Mat\'ern-type covariance operators are widely used in spatial statistics and uncertainty quantification.
They are particularly popular for modelling spatially variable uncertainties in porous media.
We are interested in solving the KL eigenproblem with Mat\'ern covariance operators with hyperparameters e.g. the correlation length.
Unfortunately, the Mat\'ern-type covariance operators are not linearly separable with respect to the hyperparameters of interest (see Assumption~\ref{Ass_Linearly_Separable}).
For this reason we introduce and analyse a class of linearly separable covariance operators which can approximate Mat\'ern-type covariance operators with arbitrary accuracy.
\begin{definition} \label{Def_Matern_int}
Let $D \subseteq \mathbb{R}^d, d=1,2,3$ be an open, bounded and connected domain, and let $X := \mathcal{L}^2(D; \mathbb{R})$. Furthermore, let $\ell \in (0, \mathrm{diam}(D)), \nu \in (0, \infty], \sigma \in (0, \infty)$.
Define the covariance kernel $c{(\nu, \ell, \sigma)}: [0, \infty) \rightarrow [0, \infty)$ as
\begin{equation*}
z \mapsto c{(\nu, \ell, \sigma)}(z) =  \frac{\sigma^2 \cdot 2^{1-\nu}}{\Gamma(\nu)}\left(\sqrt{2\nu}\frac{z}{\ell}\right)^\nu K_\nu\left( \sqrt{2\nu}\frac{z}{\ell}\right),
\end{equation*}
where $K_\nu$ is the modified Bessel function of the second kind. 
Then, the \textit{Mat\'ern-type covariance operator with smoothness $\nu$, standard deviation $\sigma$ and correlation length $\ell$} is given by
\begin{equation*}
\mathcal{C}{(\nu, \ell, \sigma)}: X \rightarrow X, \varphi \mapsto \int_D \varphi(\bsx) c{(\nu, \ell, \sigma)}(\mathrm{dist}(\bsx,\cdot)) \mathrm{d}\bsx,
\end{equation*}
where $\mathrm{dist}:D\times D\rightarrow [0, \infty)$ is the Euclidean distance in $D$.
\end{definition}

\begin{remark}
The exponential covariance operator in Examples \ref{Exponential Covariance} and \ref{Exponential Covariance_2} is a Mat\'ern-type covariance operator. 
Indeed, $\mathcal{C}{(1/2, \ell, \sigma)}\equiv \mathcal{C}^{(\ell, \sigma)}_{\exp}$.
\end{remark}

In Example~\ref{Exponential Covariance_2} we discussed the possibility of using the standard deviation $\sigma$ and the correlation length $\ell$ as hyperparameters in a Mat\'ern-type covariance operator.
What are the computational implications for the KL expansion?
Changing $\sigma$ only rescales the KL eigenvalues, and does not require a re-computation of the KL.
However, changing the correlation length clearly changes the KL eigenfunctions.
We can see this in Figure \ref{Figure_Comparison_Eigenfunctions}.
However, the good news is that the KL eigenfunctions for different correlation lengths are very similar, for example, the number and location of extrema is preserved.
This suggests that we might be able to construct a useful reduced basis from selected snapshots of KL eigenpairs corresponding to different correlation lengths.
\begin{figure}[htb]
\centering
\includegraphics[scale=0.75]{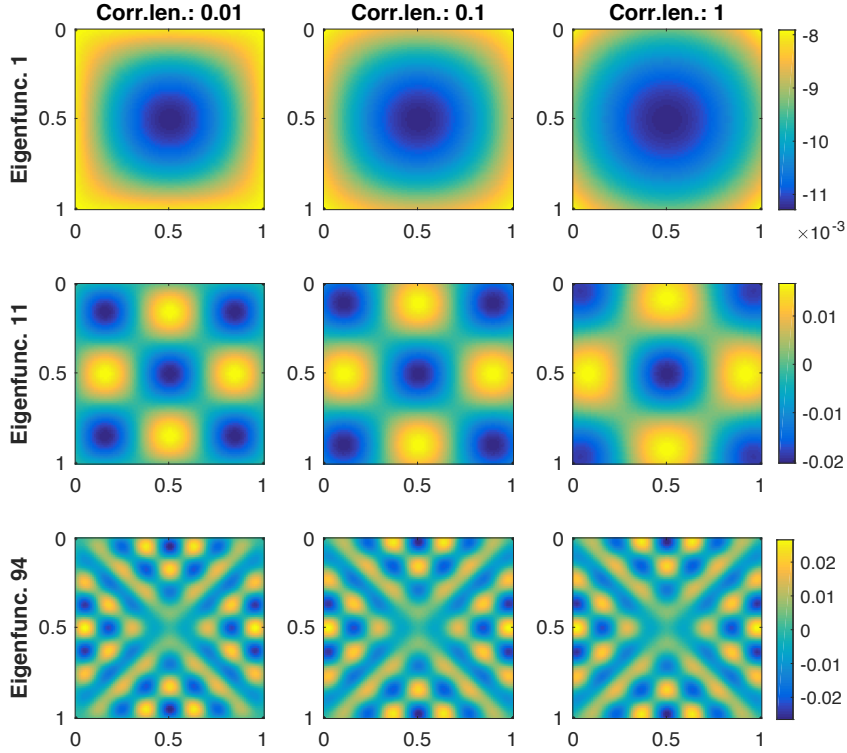}
\caption{Eigenfunctions 1, 11 and 94 of the Mat\'ern-type covariance operator with correlation lengths $\ell = 0.01, 0.1, 1$ and $\nu = 1/2$.}
\label{Figure_Comparison_Eigenfunctions}
\end{figure}

To be able to construct and use the reduced basis efficiently requires the linear separability of the covariance operator, see Assumption~\ref{Ass_Linearly_Separable}.
The Mat\'ern operator is linearly separable with respect to $\sigma$.
Unfortunately, it is not linearly separable with respect to $\ell$, since the covariance function 
$c{(\nu, \ell, \sigma)}$ is not linearly separable.
However, it is possible to approximate $\mathcal{C}(\nu, \ell, \sigma)$ with any precision by a linearly separable operator.
Using the approximate, linearly separable operator allows us to construct an offline-online decomposition for the exact Mat\'ern covariance operator without the need to use advanced linearisation techniques, such as the discrete empirical interpolation method ((D)EIM).
We show this in the remainder of this section for $\nu \in (0, \infty) \backslash \mathbb{N}$.
Similar approximations for $\nu \in \mathbb{N} \cup \{\infty\}$ follow from the analyticity of $K_\nu$.

\begin{assumptions}\label{assumptions:ell}
	The correlation length $\ell$ satisfies $0<\underline{\ell} \leq \ell$ with fixed $\underline{\ell}$.
\end{assumptions}

\begin{definition} \label{Def_LinearSeparable_matern}
Let $\nu \in (0, \infty) \backslash \mathbb{N}$ and $N_\lin \in 2\mathbb{N}$. 
Moreover, let Assumption~\ref{assumptions:ell} hold.
We define the \emph{$N_\lin$-term approximate} of $c(\nu, \ell, \sigma)$ by
\begin{align*}
c&(\nu, \ell, \sigma, N_\lin)(z)\\
&:= \frac{\sigma^2\pi \mathrm{csc}(\pi\nu)}{\Gamma(\nu)}   \sum_{k=1}^{N_\lin/2}\frac{(\sqrt{\nu}z/\ell)^{2k-2}}{2^{k-1} \Gamma(k-\nu)(k-1)!} -  \frac{(\sqrt{\nu}z/\ell)^{2\nu+2k-2} }{2^{k+\nu-1}\Gamma(k+\nu)(k-1)!}.
\end{align*}
The associated covariance operator is then defined as
$$\widetilde{\mathcal{C}}{(\nu, \ell, \sigma, N_\lin)}:X \rightarrow X, \varphi \mapsto \int_D \varphi(\bsx) c{(\nu, \ell, \sigma, N_\lin)}(\mathrm{dist}(\bsx,\cdot)) \mathrm{d}\bsx.$$
\end{definition}
Note that the operator $\widetilde{\mathcal{C}}{(\nu, \ell, \sigma, N_\lin)}$ is linearly separable w.r.t. $\ell$. 
In particular, we have
\begin{align*}
\widetilde{\mathcal{C}}{(\nu, \ell, \sigma, N_\lin)}&:=\sum_{k=1}^{N_\lin}F_k(\nu,\ell,\sigma)\mathcal{C}_k(\nu) \\
F_k(\nu,\ell,\sigma) &:= \frac{\pi \mathrm{csc}(\pi\nu)}{\Gamma(\nu)}  \cdot \begin{cases}  \frac{\sigma^2}{\ell^{k-2}}, &\text{if } k \in 2\mathbb{N},\\
\frac{\sigma^2}{\ell^{2\nu + k-1}}, &\text{if } k \in 2\mathbb{N}-1,
\end{cases} \\
\mathcal{C}_k\varphi &:= \begin{cases} \int_D\varphi(\bsx) \frac{(\sqrt{\nu}\mathrm{dist},\cdot))^{k-2}}{2^{k/2-1} \Gamma(k/2-\nu)(k/2-1)!} \mathrm{d}\bsx, &\text{if } k \in 2\mathbb{N},\\
\int_D\varphi(\bsx)  \frac{(-1) \cdot (\sqrt{\nu}z/\ell)^{2\nu+k-1} }{2^{k/2+\nu-1/2}\Gamma(k/2+1/2+\nu)(k/2-1/2)!} \mathrm{d}\bsx, &\text{if } k \in 2\mathbb{N}-1,
\end{cases}
\end{align*}
for any $\varphi \in X$.

The operator $\widetilde{\mathcal{C}}{(\nu, \ell, \sigma, N_\lin)}$ arises from a truncation of a series expansion of $K_\nu$.
This is detailed in the proof of the following lemma, where we derive an error bound between the exact Mat\'ern covariance operator $\widetilde{\mathcal{C}}{(\nu, \ell, \sigma, N_\lin)}$ and the linearly separable approximation ${\mathcal{C}}{(\nu, \ell, \sigma)}$.

\begin{lemma} \label{Theorem_Matern_approx_linearly_sepa}
Let $\nu \in (0,\infty) \backslash \mathbb{N}$, let $N_\lin \in 2\mathbb{N}$ and let Assumption~\ref{assumptions:ell} hold. Then,
\begin{align*}
\| \widetilde{\mathcal{C}}{(\nu, \ell, \sigma,N_\lin)}&- {\mathcal{C}}{(\nu, \ell, \sigma)} \|_X \\&\leq \mathrm{diam}(D)^{2d} \frac{\pi|\mathrm{csc}(\pi\nu)|}{2^{1-\nu}}(1+\zeta_{\max}^{2\nu})  \exp\left(\frac{\zeta_{\max}^2}{4}\right)    \frac{\zeta_{\max}^{2N_\lin}}{(N_\lin)!},
\end{align*}
where $\zeta_{\max} := {\mathrm{diam}(D)}/{\underline{\ell}}$.
\end{lemma}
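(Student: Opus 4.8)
The plan is to exploit the observation, made just before the statement, that the $N_\lin$-term approximate of Definition~\ref{Def_LinearSeparable_matern} is nothing but a \emph{truncated power series} for the Mat\'ern kernel. \textbf{Step 1 (series representation).} For $\nu\in(0,\infty)\setminus\mathbb{N}$ I would start from the representation of the modified Bessel function of the second kind through those of the first kind, $K_\nu(x)=\tfrac{\pi}{2}\csc(\pi\nu)\bigl(I_{-\nu}(x)-I_\nu(x)\bigr)$, together with the everywhere-convergent series $I_{\pm\nu}(x)=\sum_{m\ge 0}\frac{(x/2)^{2m\pm\nu}}{m!\,\Gamma(m\pm\nu+1)}$. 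Substituting $x=\sqrt{2\nu}\,z/\ell$ into the kernel of Definition~\ref{Def_Matern_int} and absorbing the prefactor $\tfrac{\sigma^2 2^{1-\nu}}{\Gamma(\nu)}x^\nu$ turns $c(\nu,\ell,\sigma)(z)$ into the difference of two power series in $z$. A direct index match ($k=m+1$) then shows that $c(\nu,\ell,\sigma,N_\lin)$ is precisely the partial sum retaining the terms $m=0,\dots,N_\lin/2-1$ of each series, so the pointwise kernel error $E(z):=c(\nu,\ell,\sigma)(z)-c(\nu,\ell,\sigma,N_\lin)(z)$ equals the sum of the two \emph{tails} $\sum_{m\ge N_\lin/2}(\cdots)$.

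\textbf{Step 2 (pointwise tail bound).} Under Assumption~\ref{assumptions:ell} the argument obeys $\sqrt{\nu}\,z/\ell\le\zeta_{\max}$ uniformly for $z\in[0,\mathrm{diam}(D)]$ and $\ell\ge\underline\ell$, so I would bound each tail term by its value at $\zeta_{\max}$. The first tail carries only powers $\zeta^{2m}$ whereas the second carries an additional factor $\zeta^{2\nu}$; this dichotomy is what produces the $(1+\zeta_{\max}^{2\nu})$ factor. For the summation I would invoke the standard exponential-series tail estimate $\sum_{m\ge M}\frac{y^m}{m!}\le\frac{y^M}{M!}e^{y}$ with $M=N_\lin/2$ and $y$ a fixed multiple of $\zeta_{\max}^2$, together with the monotonicity bound $\Gamma(m+\nu+1)\ge m!$ for the second series. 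After simplifying the factorials this delivers a leading factor of the form $\zeta_{\max}^{2N_\lin}/(N_\lin)!$ and an exponential factor $\exp(\zeta_{\max}^2/4)$, while the reflection-formula prefactor $\pi\csc(\pi\nu)/\Gamma(\nu)$ is carried through in absolute value.

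\textbf{Step 3 (from kernel to operator).} The difference $\widetilde{\mathcal C}(\nu,\ell,\sigma,N_\lin)-\mathcal C(\nu,\ell,\sigma)$ is the integral operator on $X=\mathcal{L}^2(D;\mathbb{R})$ with kernel $E(\mathrm{dist}(\bsx,\bsx'))$. I would bound its $X$-operator norm by its Hilbert--Schmidt norm, which is in turn controlled by the domain volume times the uniform kernel bound $\sup_{z\in[0,\mathrm{diam}(D)]}|E(z)|$ obtained in Step~2; estimating the volume by a power of $\mathrm{diam}(D)$ yields the $\mathrm{diam}(D)^{2d}$ prefactor. Combining Steps~2 and~3 gives the claimed estimate, and the explicit $\sigma^2$ scaling factors out cleanly.

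\textbf{Main obstacle.} The delicate point is the uniform control of $1/\Gamma(m-\nu+1)$ in the first tail: since $\nu$ may exceed $m+1$, its argument can be negative, the factor changes sign, and so plain monotonicity is unavailable. I expect to handle this via the reflection formula $\Gamma(m-\nu+1)^{-1}=\pi^{-1}(-1)^m\sin(\pi\nu)\,\Gamma(\nu-m)$, which simultaneously cancels part of the $\csc(\pi\nu)$ prefactor and exposes the decay of the factor as $m\to\infty$; recasting the resulting mixed Gamma expressions into the clean closed form above is the most bookkeeping-intensive part of the argument. A secondary, routine point is to verify that every estimate is uniform in $\ell\ge\underline\ell$, which is immediate once all quantities are written through $\zeta_{\max}=\mathrm{diam}(D)/\underline\ell$.
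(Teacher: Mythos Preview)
Your proposal is correct and follows essentially the same route as the paper: series representation of $K_\nu$ via $I_{\pm\nu}$, a pointwise tail bound using the exponential remainder estimate, and then Cauchy--Schwarz (Hilbert--Schmidt) to pass from kernel to operator norm with the volume factor $\mathrm{Leb}(D)^2\le\mathrm{diam}(D)^{2d}$.

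The one place where you work harder than necessary is your ``main obstacle''. The paper does not invoke the reflection formula for $\Gamma(m-\nu+1)$ at all; instead it simply assumes without loss of generality that $N_\lin>\nu$, so that every index $k$ appearing in the tail satisfies $k-\nu>0$ and hence $\Gamma(k-\nu)>1$ can be dropped from the denominator directly. Since the statement is only interesting asymptotically in $N_\lin$ (and the finitely many small-$N_\lin$ cases are absorbed into the constants or are vacuous because the right-hand side is not small), this shortcut is legitimate and spares you the bookkeeping you anticipated. Everything else in your plan matches the paper's argument.
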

\begin{proof}
See Appendix~\ref{Section_Proofs_of_Lemmata}.
\end{proof}
The covariance operator approximation brings new issues. 
The Mat\'ern-type covariance operators $\mathcal{C}{(\nu, \ell, \sigma)}$ are valid covariance operators in $\mathrm{CO}(X)$.
However, this is not necessarily the case for $\widetilde{\mathcal{C}}{(\nu, \ell, \sigma, N_\lin)}$.
One can easily verify the following.
\begin{lemma} \label{Lemma_C_tilde}
The operator $\widetilde{\mathcal{C}}{(\nu, \ell, \sigma, N_\lin)}$ is self-adjoint, trace-class and continuous.
\end{lemma}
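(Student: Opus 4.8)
The plan is to treat the three asserted properties separately: self-adjointness and continuity follow at once from the structure of the kernel, while the trace-class property is the only point that needs genuine work. Throughout I write $\kappa(\bsx,\bsx') := c(\nu,\ell,\sigma,N_\lin)(\mathrm{dist}(\bsx,\bsx'))$ for the kernel of $\widetilde{\mathcal C}(\nu,\ell,\sigma,N_\lin)$ and exploit that, by Definition~\ref{Def_LinearSeparable_matern}, $\kappa$ is a finite linear combination of pure powers $\mathrm{dist}(\bsx,\bsx')^{2k-2}$ (even integer exponents) and $\mathrm{dist}(\bsx,\bsx')^{2\nu+2k-2}$ (non-integer exponents, since $\nu\notin\mathbb N$), with $k=1,\dots,N_\lin/2$.

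First I would dispatch self-adjointness and continuity. Since $\mathrm{dist}$ is symmetric, $\kappa(\bsx,\bsx')=\kappa(\bsx',\bsx)$, and $\kappa$ is real-valued; hence the associated operator is self-adjoint on $X=\mathcal L^2(D;\mathbb R)$. Every exponent appearing in $\kappa$ is nonnegative, so each term is continuous in $z=\mathrm{dist}(\bsx,\bsx')\ge 0$ up to and including the diagonal $z=0$; thus $\kappa\in C(\overline D\times\overline D)$, which is bounded because $\overline D$ is compact. A bounded kernel on the bounded set $D\times D$ lies in $\mathcal L^2(D\times D)$, so $\widetilde{\mathcal C}$ is Hilbert--Schmidt and in particular compact and continuous.

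For the trace-class property I would split $\widetilde{\mathcal C}=\widetilde{\mathcal C}^{\mathrm{pol}}+\widetilde{\mathcal C}^{\mathrm{frac}}$ according to the two families of exponents. The polynomial part is easy: writing $\mathrm{dist}(\bsx,\bsx')^{2k-2}=\big(\sum_{i=1}^d (x_i-x_i')^2\big)^{k-1}$ and expanding by the multinomial theorem produces a finite sum of products $a(\bsx)\,b(\bsx')$ of polynomials, so the integral operator with this kernel has finite rank, and $\widetilde{\mathcal C}^{\mathrm{pol}}$ is therefore trace class. For the fractional part it suffices, since the trace-class operators form a linear ideal, to show that each operator with kernel $|\bsx-\bsx'|^{\beta}$, $\beta=2\nu+2k-2>0$, has summable singular values. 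Such a kernel is smooth off the diagonal and exhibits a singularity only in its derivatives of order exceeding $\beta$; by Birman--Solomjak-type eigenvalue estimates for integral operators with a diagonal singularity of order $\beta$ on a bounded $D\subset\mathbb R^d$, the singular values decay like $n^{-1-\beta/d}$, which is summable precisely because $\beta>0$. Summing the finitely many pieces gives that $\widetilde{\mathcal C}$ is trace class.

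The main obstacle is exactly this last step. Self-adjointness, continuity, and the finite-rank polynomial part are routine; but the fractional kernels are sign-indefinite and merely Hölder at the diagonal, and a continuous symmetric kernel on a bounded domain is in general \emph{not} trace class, so the summability of the singular values of the operators with kernel $|\bsx-\bsx'|^\beta$ is the crux. I would establish it through the diagonal-singularity eigenvalue estimate above, or equivalently through the Fourier picture, in which the (distributional) symbol of $|\cdot|^\beta$ decays like $|\xi|^{-d-\beta}$ and thus forces the decay rate $n^{-1-\beta/d}$; the restriction $d\le 3$ plays no role beyond keeping the domain low-dimensional. Note that the operator-norm estimate of Lemma~\ref{Theorem_Matern_approx_linearly_sepa} is of no use here, as it controls the wrong ideal norm.
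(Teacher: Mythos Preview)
Your handling of self-adjointness and continuity matches the paper's: both argue from symmetry of the kernel and boundedness of $D$. For the trace-class property, however, you take a substantially more careful route than the paper, and with good reason. The paper's proof simply records that the diagonal integral
\[
\int_D \kappa(\bsx,\bsx)\,\mathrm{d}\bsx \;=\; c(\nu,\ell,\sigma,N_\lin)(0)\cdot \mathrm{Leb}(d)(D)
\]
is finite and stops there. As you correctly point out, for a continuous symmetric kernel this implication (finite diagonal integral $\Rightarrow$ trace class) is only valid when the operator is \emph{positive semidefinite}, via Mercer's theorem; in general it fails. Since $\widetilde{\mathcal C}(\nu,\ell,\sigma,N_\lin)$ is precisely the object that may have negative eigenvalues---this is the very reason the paper introduces $\widetilde{\mathcal C}_0$ immediately afterwards---the paper's argument is, strictly speaking, incomplete at this step.

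Your decomposition $\widetilde{\mathcal C}=\widetilde{\mathcal C}^{\mathrm{pol}}+\widetilde{\mathcal C}^{\mathrm{frac}}$ cleanly isolates the real content. The even-power piece is finite rank by the multinomial expansion, which is correct and elegant. The fractional piece then carries the burden, and the claimed singular-value decay $s_n \asymp n^{-1-\beta/d}$ for the operator with kernel $|\bsx-\bsx'|^\beta$ on a bounded domain is indeed correct; it follows from the Fourier symbol $\sim |\xi|^{-d-\beta}$ combined with Weyl-type asymptotics (Widom, Birman--Solomjak), or from Reade-type results on eigenvalues of H\"older kernels. One minor wording point: for $\beta>0$ the kernel $|\bsx-\bsx'|^\beta$ is not singular on the diagonal---it vanishes there; the lack of smoothness is in its higher derivatives. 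Your Fourier-side description captures this accurately, but the phrase ``diagonal singularity of order $\beta$'' is slightly misleading. Overall your argument is sound and fills a genuine gap that the paper's short proof glosses over.
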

\begin{proof}
See Appendix~\ref{Section_Proofs_of_Lemmata}.
\end{proof}
However, $\widetilde{\mathcal{C}}{(\nu, \ell, \sigma, N_\lin)}$ is not necessarily positive definite.
Under weak assumptions we can cure this by replacing $\widetilde{\mathcal{C}}{(\nu, \ell, \sigma, N_\lin)}$ by an operator $\widetilde{\mathcal{C}}_{0}{(\nu, \ell, \sigma, N_\lin)}$ which has the exact same eigenfunctions and positive eigenvalues as $\widetilde{\mathcal{C}}{(\nu, \ell, \sigma, N_\lin)}$, however, all negative eigenvalues are set to zero. 
Formally, we define  $\widetilde{\mathcal{C}}_{0}{(\nu, \ell, \sigma, N_\lin)}$ by
\begin{align} \label{C0tilde}
\widetilde{\mathcal{C}}_{0}{(\nu, \ell, \sigma, N_\lin)} = \sum_{i=1; \widetilde{\lambda}_i > 0}^{\infty} \widetilde{\lambda}_i (\widetilde{\psi}_i \otimes \widetilde{\psi}_i),
\end{align}
where $(\widetilde{\lambda}_i,\widetilde{\psi}_i)_{i=1}^\infty$ are eigenpairs of $\widetilde{\mathcal{C}}{(\nu, \ell, \sigma, N_\lin)}$ and the eigenfunctions are orthonormal.
Note that the same technique has been applied in \cite{Chernov2016} to remove the degeneracy of multilevel sample covariance estimators.
Fortunately, we can show that the approximation error of $\widetilde{\mathcal{C}}_{0}{(\nu, \ell, \sigma, N_\lin)}$ is of the same order as the error of $\widetilde{\mathcal{C}}{(\nu, \ell, \sigma, N_\lin)}$.

\begin{lemma} \label{Lemma_Operator_bounds}
	The Mat\'ern-type covariance operator $\mathcal{C}(\nu, \ell, \sigma)$ and the approximate operator $\widetilde{\mathcal{C}}_{0}(\nu, \ell, \sigma,N_\lin)$ in \eqref{C0tilde} satisfy 
		\[\| \widetilde{\mathcal{C}}_{0}{(\nu, \ell, \sigma,N_\lin)}- {\mathcal{C}}{(\nu, \ell, \sigma)} \|_X \leq 2 \| \widetilde{\mathcal{C}}{(\nu, \ell, \sigma,N_\lin)}- {\mathcal{C}}{(\nu, \ell, \sigma)} \|_X.\]
\end{lemma}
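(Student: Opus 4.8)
The plan is to reduce everything to a triangle inequality plus one eigenvalue perturbation estimate. Write $\mathcal{C} := \mathcal{C}(\nu,\ell,\sigma)$, $\widetilde{\mathcal{C}} := \widetilde{\mathcal{C}}(\nu,\ell,\sigma,N_\lin)$ and $\widetilde{\mathcal{C}}_0 := \widetilde{\mathcal{C}}_0(\nu,\ell,\sigma,N_\lin)$, and set $E := \widetilde{\mathcal{C}} - \mathcal{C}$. First I would apply the triangle inequality in the operator norm,
\[
\|\widetilde{\mathcal{C}}_0 - \mathcal{C}\|_X \leq \|\widetilde{\mathcal{C}}_0 - \widetilde{\mathcal{C}}\|_X + \|\widetilde{\mathcal{C}} - \mathcal{C}\|_X,
\]
so that the claim follows once I show that the \emph{curing error} satisfies $\|\widetilde{\mathcal{C}}_0 - \widetilde{\mathcal{C}}\|_X \leq \|\widetilde{\mathcal{C}} - \mathcal{C}\|_X = \|E\|_X$.

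To analyse the curing error I would use the spectral representation. By Lemma~\ref{Lemma_C_tilde} the operator $\widetilde{\mathcal{C}}$ is self-adjoint and trace-class, hence compact, and the spectral theorem provides the orthonormal eigenpairs $(\widetilde{\lambda}_i,\widetilde{\psi}_i)$ appearing in \eqref{C0tilde}. By the very definition of $\widetilde{\mathcal{C}}_0$, the difference $\widetilde{\mathcal{C}} - \widetilde{\mathcal{C}}_0 = \sum_{i:\,\widetilde{\lambda}_i<0}\widetilde{\lambda}_i(\widetilde{\psi}_i\otimes\widetilde{\psi}_i)$ is exactly the negative part of $\widetilde{\mathcal{C}}$. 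Since its nonzero eigenvalues are the negative $\widetilde{\lambda}_i$ and these accumulate only at $0$, its operator norm equals the modulus of the most negative eigenvalue, $\|\widetilde{\mathcal{C}} - \widetilde{\mathcal{C}}_0\|_X = \max\{-\widetilde{\lambda}_i : \widetilde{\lambda}_i<0\}$ (and vanishes if $\widetilde{\mathcal{C}}$ has no negative eigenvalues, which is the trivial case).

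The core step is to bound the smallest eigenvalue of $\widetilde{\mathcal{C}}$ from below via a Rayleigh quotient argument. Since $\mathcal{C}\in\mathrm{CO}(X)$ is positive semidefinite, for every unit vector $v\in X$,
\[
\langle \widetilde{\mathcal{C}} v, v\rangle_X = \langle \mathcal{C} v, v\rangle_X + \langle E v, v\rangle_X \geq 0 - \|E\|_X = -\|E\|_X.
\]
Taking the infimum over $\|v\|_X=1$ and using the variational characterisation $\min_i\widetilde{\lambda}_i = \inf_{\|v\|_X=1}\langle \widetilde{\mathcal{C}} v,v\rangle_X$ gives $\min_i\widetilde{\lambda}_i \geq -\|E\|_X$, i.e. $-\widetilde{\lambda}_i \leq \|E\|_X$ for every $i$ with $\widetilde{\lambda}_i<0$. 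Combined with the previous paragraph this yields $\|\widetilde{\mathcal{C}} - \widetilde{\mathcal{C}}_0\|_X \leq \|E\|_X = \|\widetilde{\mathcal{C}} - \mathcal{C}\|_X$, and substituting into the triangle inequality produces the factor-two bound.

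I expect the only delicate points to be bookkeeping rather than genuine difficulty: correctly identifying the operator norm of the negative part with $|\min_i\widetilde{\lambda}_i|$ (which relies on compactness so that the minimum is attained), and making sure positive semidefiniteness of $\mathcal{C}$ enters the Rayleigh estimate at the right place. Everything else is the standard Weyl-type perturbation inequality, which I would spell out through the Rayleigh quotient above to keep the argument self-contained.
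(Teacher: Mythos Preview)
Your proposal is correct and follows essentially the same approach as the paper: triangle inequality plus the observation that $\|\widetilde{\mathcal{C}}_0-\widetilde{\mathcal{C}}\|_X$ equals the modulus of the most negative eigenvalue of $\widetilde{\mathcal{C}}$, which is bounded by $\|\widetilde{\mathcal{C}}-\mathcal{C}\|_X$ because $\mathcal{C}$ is positive semidefinite. The only cosmetic difference is that the paper tests the Rayleigh quotient at the single eigenvector $\widetilde{\psi}_{\max}$ realising the most negative eigenvalue, whereas you take the infimum over all unit vectors; both yield the same bound.
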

\begin{proof}
See Appendix~\ref{Section_Proofs_of_Lemmata}.
\end{proof}
We summarise the results in Lemma~\ref{Theorem_Matern_approx_linearly_sepa}--\ref{Lemma_Operator_bounds} as follows.
\begin{proposition}\label{Propo_Matern_approx_compl}
Let $\nu \in (0,\infty) \backslash \mathbb{N}$.
Under Assumption~\ref{assumptions:ell} there is a linearly separable, valid covariance operator
$\widetilde{\mathcal{C}}_0{(\nu, \ell, \sigma, N_\lin)} \in \mathrm{CO}(X)$
consisting of $N_\lin \in 2\mathbb{N}$ terms, such that
\begin{equation*}
\| \widetilde{\mathcal{C}}_0{(\nu, \ell, \sigma,N_\lin)}- {\mathcal{C}}{(\nu, \ell, \sigma)} \|_X \leq \mathrm{const}''(\nu) (1+\zeta_{\max}^{2\nu})  \exp\left(\frac{\zeta_{\max}^2}{4}\right)    \frac{\zeta_{\max}^{2N_\lin}}{(N_\lin)!},
\end{equation*}
where $\zeta_{\max} := {\mathrm{diam}(D)}/{\underline{\ell}}$ and $\mathrm{const}''(\nu)>0$ is a constant that depends only on $\nu$.
\end{proposition}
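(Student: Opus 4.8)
The plan is to treat Proposition~\ref{Propo_Matern_approx_compl} as the consolidation of the three preceding lemmas, assembling it in three stages: first verify that $\widetilde{\mathcal{C}}_0{(\nu, \ell, \sigma, N_\lin)}$ is a genuine member of $\mathrm{CO}(X)$, then address its separability, and finally chain the two error estimates into the stated bound.

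For the membership in $\mathrm{CO}(X)$ I would argue directly from the spectral representation \eqref{C0tilde}. By construction $\widetilde{\mathcal{C}}_0$ retains the orthonormal eigenfunctions $(\widetilde{\psi}_i)$ of $\widetilde{\mathcal{C}}$ together with only its non-negative eigenvalues, so it is positive semidefinite, and self-adjointness is immediate since each rank-one summand $\widetilde{\lambda}_i(\widetilde{\psi}_i\otimes\widetilde{\psi}_i)$ is self-adjoint. For the trace-class and continuity properties I would control the spectrum of $\widetilde{\mathcal{C}}_0$ by that of $\widetilde{\mathcal{C}}$: the retained eigenvalues form a subset of $\{\widetilde{\lambda}_i\}$, whence $\sum_{\widetilde{\lambda}_i>0}\widetilde{\lambda}_i\le\sum_i|\widetilde{\lambda}_i|<\infty$ and $\sup_{\widetilde{\lambda}_i>0}\widetilde{\lambda}_i\le\|\widetilde{\mathcal{C}}\|_X<\infty$, the finiteness being exactly what Lemma~\ref{Lemma_C_tilde} supplies. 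Since linearity is clear, this gives $\widetilde{\mathcal{C}}_0\in\mathrm{CO}(X)$.

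The error bound is then a short chaining argument. By Lemma~\ref{Lemma_Operator_bounds} we have $\|\widetilde{\mathcal{C}}_0-\mathcal{C}\|_X\le 2\|\widetilde{\mathcal{C}}-\mathcal{C}\|_X$, and Lemma~\ref{Theorem_Matern_approx_linearly_sepa} bounds the right-hand side by $2\,\mathrm{diam}(D)^{2d}\frac{\pi|\mathrm{csc}(\pi\nu)|}{2^{1-\nu}}(1+\zeta_{\max}^{2\nu})\exp(\zeta_{\max}^2/4)\frac{\zeta_{\max}^{2N_\lin}}{(N_\lin)!}$. Collecting the prefactor $2\,\mathrm{diam}(D)^{2d}\frac{\pi|\mathrm{csc}(\pi\nu)|}{2^{1-\nu}}$ into a single quantity yields the claimed estimate; I would note explicitly that, as $D$ and $d$ are fixed throughout, this prefactor may be written as $\mathrm{const}''(\nu)$, depending only on $\nu$.

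The single point that needs more than a mechanical combination of the lemmas, and which I expect to be the main obstacle, is the separability claim. Linear separability (Assumption~\ref{Ass_Linearly_Separable}) holds for $\widetilde{\mathcal{C}}$ by Definition~\ref{Def_LinearSeparable_matern}, but the passage to $\widetilde{\mathcal{C}}_0$ is a spectral truncation, which is a nonlinear operation and need not preserve the representation $\sum_k F_k(\paramedet)\mathcal{C}_k$ term by term. I would resolve this by stressing that $\widetilde{\mathcal{C}}_0$ and $\widetilde{\mathcal{C}}$ share the same eigenfunctions, so the offline-online decomposition of \S\ref{Subsec:Offline_Online} is performed on the linearly separable operator $\widetilde{\mathcal{C}}$, and the reduction to $\widetilde{\mathcal{C}}_0$ amounts only to discarding the negative reduced eigenvalues at the end of the online eigensolve. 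In this sense the separable structure is exploited at the level of $\widetilde{\mathcal{C}}$ and inherited operationally by $\widetilde{\mathcal{C}}_0$, which is the reading of \emph{linearly separable} intended in the statement.
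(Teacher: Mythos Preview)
Your proposal is correct and matches the paper's approach: the paper does not give a standalone proof of Proposition~\ref{Propo_Matern_approx_compl} but presents it explicitly as a summary of Lemmas~\ref{Theorem_Matern_approx_linearly_sepa}--\ref{Lemma_Operator_bounds}, which is precisely the chaining you carry out. Your discussion of the separability of $\widetilde{\mathcal{C}}_0$ in fact goes beyond what the paper justifies; the paper simply asserts linear separability in the statement without commenting on the fact that the spectral truncation in \eqref{C0tilde} is nonlinear in the parameter, so your operational reading (separability is exploited for $\widetilde{\mathcal{C}}$, then negative eigenvalues are discarded after the eigensolve) is a reasonable interpretation that the paper leaves implicit.
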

The expansion in \eqref{C0tilde} has infinitely many terms.
We truncate this expansion and retain only the leading $\Nsto$ terms, denoting the resulting covariance operator by $\widetilde{\mathcal{C}}_0{(\nu, \ell, \sigma,N_\lin, \Nsto)}$.

Finally, we discuss the sample path continuity.
Samples of Gaussian random fields with Mat\'ern-type covariance operators are almost surely continuous functions.
In the following proposition we show that this also holds for the realisations of the Gaussian random fields with measure $\mathrm{N}(0, \widetilde{\mathcal{C}}_0{(\nu, \ell, \sigma,N_\lin,  \Nsto)})$.
\begin{proposition}
Let $\theta \sim \mathrm{N}\left(0, \widetilde{\mathcal{C}}_0{(\nu, \ell, \sigma,N_\lin, \Nsto)}\right)$, where $\nu \in (0, \infty)\backslash \mathbb{N}$. 
Then it holds $\theta \in C^0(D)$. 
\end{proposition}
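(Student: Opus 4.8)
The plan is to exploit that the truncated operator $\widetilde{\mathcal{C}}_0{(\nu, \ell, \sigma, N_\lin, \Nsto)}$ has finite rank, so that the Karhunen-Lo\`eve expansion of $\theta$ collapses to a \emph{finite} sum. Writing $(\widetilde{\lambda}_i, \widetilde{\psi}_i)$ for the eigenpairs as in \eqref{C0tilde} and retaining only the leading $\Nsto$ terms (all with $\widetilde{\lambda}_i > 0$), Definition~\ref{Def_KL_exp} gives
\[
\theta = \sum_{i=1}^{\Nsto} \sqrt{\widetilde{\lambda}_i}\, \xi_i\, \widetilde{\psi}_i,
\qquad \xi_i \sim \mathrm{N}(0,1)\ \text{i.i.d.}
\]
Since the $\xi_i$ are almost surely finite, every realisation of $\theta$ is a fixed finite linear combination of the eigenfunctions $\widetilde{\psi}_i$. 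A finite sum of continuous functions is continuous, so it suffices to show that each $\widetilde{\psi}_i$ with $\widetilde{\lambda}_i > 0$ admits a continuous representative; this will in fact yield the stronger conclusion that every sample path, not merely almost every one, lies in $C^0(D)$.

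First I would verify that the kernel is continuous. By Definition~\ref{Def_LinearSeparable_matern} the function $z \mapsto c{(\nu, \ell, \sigma, N_\lin)}(z)$ is a finite linear combination of the power functions $z^{2k-2}$ and $z^{2\nu+2k-2}$, $k = 1,\dots,N_\lin/2$. All exponents are nonnegative: the first family yields $0,2,\dots,N_\lin-2$, and the second yields $2\nu,2\nu+2,\dots$ with $\nu > 0$. Each $z \mapsto z^p$ with $p \geq 0$ is continuous on $[0,\infty)$ (including at $z=0$, where the noninteger powers tend to $0$), so $c{(\nu, \ell, \sigma, N_\lin)}$ is continuous there. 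Composing with the continuous map $\mathrm{dist}\colon \overline{D}\times\overline{D} \to [0,\infty)$ shows that $(\bsx, \bsy) \mapsto c{(\nu, \ell, \sigma, N_\lin)}(\mathrm{dist}(\bsx, \bsy))$ is continuous on the compact set $\overline{D}\times\overline{D}$, hence bounded and uniformly continuous.

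The key step is then a standard bootstrapping argument for the eigenfunctions. For $\widetilde{\lambda}_i > 0$ the eigenrelation reads, as an identity in $\mathcal{L}^2(D;\mathbb{R})$,
\[
\widetilde{\psi}_i(\bsy) = \frac{1}{\widetilde{\lambda}_i}\int_D \widetilde{\psi}_i(\bsx)\, c{(\nu, \ell, \sigma, N_\lin)}(\mathrm{dist}(\bsx, \bsy))\, \mathrm{d}\bsx,
\]
so $\widetilde{\psi}_i$ equals the image of the $\mathcal{L}^2$ function $\widetilde{\psi}_i$ under an integral operator with continuous kernel. I would estimate $|\widetilde{\psi}_i(\bsy) - \widetilde{\psi}_i(\bsy')|$ by Cauchy--Schwarz, bounding it by $\widetilde{\lambda}_i^{-1}\|\widetilde{\psi}_i\|_X$ times the $\mathcal{L}^2$-modulus of continuity of the kernel in its second argument. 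Uniform continuity of the kernel on $\overline{D}\times\overline{D}$, together with boundedness of $D$, forces this modulus to vanish as $\bsy' \to \bsy$, so the right-hand side is continuous in $\bsy$. Hence $\widetilde{\psi}_i$ has a continuous representative, which we select.

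There is no genuine obstacle here: the argument is Mercer-type regularity made elementary by the explicit polynomial-plus-fractional-power form of the approximate kernel. The only point requiring care is the bootstrapping step, where one must use the $\mathcal{L}^2$-continuity of $\bsy \mapsto c{(\nu, \ell, \sigma, N_\lin)}(\mathrm{dist}(\cdot, \bsy))$ rather than mere pointwise continuity; this follows at once from uniform continuity on the compact closure $\overline{D}\times\overline{D}$. Finiteness of the KL expansion, guaranteed by the $\Nsto$-term truncation, is precisely what lets us upgrade continuity of each eigenfunction to continuity of every sample path of $\theta$, so that $\theta \in C^0(D)$.
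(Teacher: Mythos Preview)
Your proposal is correct and follows essentially the same approach as the paper: write $\theta$ as the finite KL expansion $\sum_{i=1}^{\Nsto}\sqrt{\widetilde{\lambda}_i}\,\xi_i\,\widetilde{\psi}_i$, then use the eigenrelation together with continuity of the kernel $c(\nu,\ell,\sigma,N_\lin)(\mathrm{dist}(\cdot,\cdot))$ to bootstrap each $\widetilde{\psi}_i$ from $\mathcal{L}^2$ to $C^0$. Your treatment is in fact more detailed than the paper's, which simply asserts that the kernel is continuous by definition and that the integral on the right-hand side of the eigenrelation is continuous in $\bsx'$; you spell out both the continuity of the power terms at $z=0$ and the Cauchy--Schwarz estimate that the paper leaves implicit.
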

\begin{proof}
We consider the random field in terms of its (finite) KL expansion,
\begin{equation*}
\theta := \theta_{\mathrm{KL}}^{N_{\mathrm{sto}} }:= \sum_{i=1}^{N_{\mathrm{sto}}} \sqrt{\widetilde{\lambda}_i} \widetilde{\psi}_i \xi_i,
\end{equation*}
where $(\widetilde{\lambda}_i, \widetilde{\psi}_i)_{i=1}^{\Nsto}$ are the eigenpairs of $\widetilde{\mathcal{C}}_0{(\nu, \ell, \sigma,N_\lin, \Nsto)}$ with positive eigenvalues. 
Then, $\theta$ is a continuous function, if $(\widetilde{\psi}_i)_{i=1}^{\Nsto}$ is a family of continuous functions.
This is clear by the definition of $(\widetilde{\psi}_i)_{i=1}^{\Nsto}$.
Indeed, let $i = 1,\dots,\Nsto$. 
Then it holds 
\begin{equation*}
\widetilde{\psi}_i(\bsx')= \frac{1}{\lambda_i} \int_D \widetilde{\psi}_i(\bsx) \widetilde{c}^{(\nu, \ell, \sigma,N_\lin)}(\mathrm{dist}(\bsx,\bsx')) \mathrm{d}\bsx, \ \ \ y \in D.
\end{equation*}
By definition, $\widetilde{c}^{(\nu, \ell, \sigma,N_\lin)}(\mathrm{dist}(\cdot, \cdot))$  is a continuous function. 
Hence, one can easily verify that the right-hand side is a continuous function in $\bsx'$.
\end{proof}

We now comment on the error bound given in Lemma \ref{Theorem_Matern_approx_linearly_sepa} and Proposition \ref{Propo_Matern_approx_compl}.
As $N_\lin$ increases, the error bound goes to zero, asymptotically like $O(1/(N_\lin!); N_\lin \rightarrow \infty)$. 
However, when the lower bound of the correlation length $\underline{\ell}$ is small, the prefactor of the error bound explodes like $O(\exp(\underline{\ell}^{-2});\underline{\ell} \downarrow 0)$.
Hence, for small $\underline{\ell}$, a very large number of terms $N_\lin$ is required to obtain a small error.
In addition, for large $N_\lin$, numerical cancellations occur and reduce the accuracy of the approximation.
We show this for the exponential covariance in Figure \ref{Fig_truncation_error_kernel} where we plot the truncation error
\begin{equation} \label{Eq_truncationerror_covar_kernel}
\sup_{z\in [0,\sqrt{2}], \ell \in [\underline{\ell}, \sqrt{2}]}|c(\nu, \ell, \sigma, N_\lin)(z)-c(\nu, \ell, \sigma)(z)|,
\end{equation}
where $\nu = 1/2$ refers to the exponential covariance and $\sigma = 1$, for different choices of $N_\lin$ and $\underline{\ell}$.
We clearly see that the linearisation technique in Definition \ref{Def_LinearSeparable_matern} is not suitable for very small correlation lengths. 
In such a case, one could use alternative linearisation techniques, e.g. a polynomial chaos expansions in $\ell$, a Taylor expansion of the Fourier representation of the Mat\'ern kernel, or (D)EIM.
\begin{figure}
\centering
\includegraphics[scale=0.7]{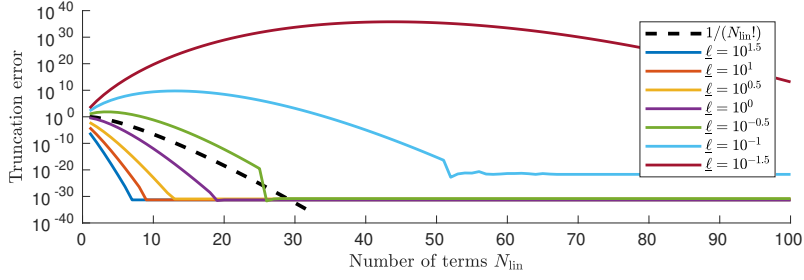}
\caption{Error in the exponential covariance kernel when using the truncated linearisation in  Definition \ref{Def_LinearSeparable_matern}, for different numbers of terms $N_\lin = 1,\dots,100$ and different minimal correlation lengths $\underline{\ell} = 10^{-1.5},\dots,10^{1.5}$.}
\label{Fig_truncation_error_kernel}
\end{figure}

Until now we considered the estimation of $\ell$ and $\sigma$, but not the estimation of $\nu$. 
We comment on this in the following remark.
Note further that the estimation of the smoothness $\nu$ of a Gaussian random field is studied for instance in the PhD thesis 
\cite[Chp. 4]{Haskard2007} where a maximum-likelihood type estimation of the smoothness is performed.
\begin{remark}
For the Mat\'ern kernel the  map $\nu \mapsto \zeta^\nu K_\nu(\zeta)$ is analytic for fixed $\zeta \neq 0$. 
Hence a linearisation as discussed in this section is generally possible. 
However, we expect that a reduced basis approach is not an efficient way to estimate the smoothness of a random field.
Note that the smoothness parameter determines the smoothness of the random field realisations and shape of the eigenfunctions.
To accurately represent functions with variable smoothnesses one would need separate reduced bases for each value of the smoothness parameter since otherwise we cannot guarantee mathematically that the random field sample is a.s. in the correct function space.
We expect that over a small range of smoothness parameters a reduced basis could be constructed, however, the potential computational savings are limited.

\end{remark}
\section{Reduced basis sampling} \label{Sec_RB_Sampling}
In \S\ref{Subsec_Comp_Cost} we discuss sampling from a Markov kernel, or, equivalently, a parameterised Gaussian measure
\begin{equation*}
K(\cdot | \parame) := \mathrm{N}(m(\parame), \mathcal{C}(\parame)),
\end{equation*}
where $\parame \sim \mu'$. 
Now, to reduce the computational cost, we combine Algorithm~\ref{Alg_SMK} and reduced bases in \S\ref{Subsec_RB_sampling_algorithm}.
We discuss the computational cost of the offline and the online phase of the suggested reduced basis sampling in \S\ref{Subsec:RB_Comp_Cost}.
Finally, in \S \ref{Subsec:RB_RF_expansion} we explain how the reduced basis induces an alternative expansion for the parameterised Gaussian random field. 

\subsection{Algorithm} \label{Subsec_RB_sampling_algorithm}
First we describe the offline phase of the reduced basis sampling.
We use a POD approach to compute a reduced basis $W$ for $\mathcal{C}(\cdot)$.
Here, it is important that the dimension of $X_\RB = \mathrm{span}(W)$ is larger than $\Nsto$.
Furthermore, we assume that $\mathcal{C}(\cdot)$ fulfils Assumption~\ref{Ass_Linearly_Separable}, i.e., it has the linearly separable form $$\mathcal{C}(\cdot) := \sum_{k=1}^{N_\lin} F_k(\cdot)\mathcal{C}_k.$$
Having constructed the reduced basis $W$, we compute $\mathcal{C}_k^\RB = W^*\mathcal{C}_kW, k =1,\dots,N_\lin$.
\begin{algorithm}[htb]

 \For{$n \in \{1,\dots,N_{\smpl}\}$}{
 Sample $\parame_n \sim \mu'$
 
 $\mathcal{C}^\RB(\parame_n) \leftarrow \sum_{k=1}^{N_\lin} F_k(\parame_n)\mathcal{C}_k^\RB$
 
$\Psi_n^\RB(\parame_n) \leftarrow  \mathrm{eigs}(\mathcal{C}^\RB(\parame_n), \Nsto$)

 $\Psi_n(\parame_n) \leftarrow W\Psi_n^\RB(\parame_n)$
 
 Sample $\vxi \sim \mathrm{N}(0, \mathrm{Id}_{\Nsto})$
 
 $\theta_n \leftarrow m(\parame_n) + \Psi_n(\parame_n) \vxi$
 }
 \caption{Reduced basis sampling from the parameterised Gaussian measure}
 \label{Alg_RBSMK}
\end{algorithm}
Then, we proceed with the online phase (see Algorithm \ref{Alg_RBSMK}).
We iterate over $n = 1,\dots,N_{\smpl}$.
In each step we first sample $\parame_n \sim \mu'$.
Then, we evaluate the reduced covariance operator $\mathcal{C}^\RB(\parame_n)$, compute the eigenpairs $(\lambda^\RB_i(\parame_n),w^\RB_i(\parame_n))_{i=1}^{\Nsto}$ of $\mathcal{C}^\RB(\parame_n)$, and return $\Psi_n^\RB(\parame_n) := \left(\sqrt{\lambda^\RB_i(\parame_n)}w^\RB_i(\parame_n) : i = 1,\dots,\Nsto \right)$.
Next, we compute the representation of  $\Psi_n^\RB(\parame_n)$ on the full space $X$, that is, $\Psi_n(\parame_n) := W\Psi_n^\RB(\parame_n)$.
Finally, we proceed as in Algorithm~\ref{Alg_SMK}:
We sample a multivariate standard Gaussian random variable with $\Nsto$ components and return $m(\parame_n) + \Psi_n(\parame_n) \vxi \sim \mathrm{N}(m(\parame_n), \mathcal{C}^{\RB,  \Nsto}(\parame_n))$.
The covariance operator is given by  $\mathcal{C}^{\RB,  \Nsto}(\parame_n) = \Psi_n(\parame_n)\Psi_n(\parame_n)^*.$

\subsection{Computational cost} \label{Subsec:RB_Comp_Cost}
We assume again that $X  = \mathbb{R}^N$.
The number of solves of the full eigenproblem in the offline phase is $N_{\snap}$.
We consider the following assumptions throughout the rest of the article.
We will see that these crucial to obtain a speed-up with reduced basis sampling.
\begin{assumptions}
	Let $N_{\snap} \ll N_{\smpl}$, $\Nsto \leq N_\RB \ll N$. 
	Moreover, assume that $\mathcal{C}(\paramedet)$ and $ \mathcal{C}^\RB(\paramedet)$ are dense matrices for $\paramedet \in R$.
\end{assumptions}

The computational cost of the tasks in the offline phase is given in Table \ref{Table_Cost_offline_phase}. 
The total offline cost is $O({N_{\snap}N^2}+ {N_{\snap}N^2 \Nsto} + {{N_{\snap}N^2 \Nsto}};N\rightarrow \infty)$.
Since $N_{\snap} \ll N_{\smpl}$ the offline cost is asymptotically much cheaper than the cost of Algorithm~\ref{Alg_SMK} where we solve the full eigenproblem for each sample. 
\begin{table}[htb]
\centering
\begin{tabular}{l|l}\hline
Task & Computational Cost \\ \hline
Construct the full operator & $O(N_{\snap}N^2)$ \\
Solve the full eigenproblem & $O(N_{\snap}N^2 \Nsto)$ \\
POD & $O(N_{\snap}N^2 \Nsto)$ \\ \hline
\end{tabular}
\caption{Computational cost of the offline phase.}
\label{Table_Cost_offline_phase}
\end{table}

The computational cost of the tasks in the online phase is given in Table \ref{Table_Cost_online_phase}.
The total online cost is $O({N_{\smpl}N_\RB^2 N_\lin} + {N_{\smpl}N_\RB^2 \Nsto}+ {N_{\smpl}N_\RB N}; N\rightarrow \infty)$.
\begin{table}[htb]
\centering
\begin{tabular}{l|l}\hline
Task & Computational Cost \\ \hline
Construct the reduced operator& $O(N_{\smpl}N_\RB^2 N_\lin)$ \\
Solve the reduced eigenproblem & $O(N_{\smpl}N_\RB^2 \Nsto)$ \\
Map the reduced solution to the full space & $O(N_{\smpl}N_\RB N)$ \\ \hline
\end{tabular}
\caption{Computational cost of the online phase.}
\label{Table_Cost_online_phase}
\end{table}
In the online phase we solve the covariance eigenproblems in the reduced space.
The high-dimensional full space $X$ is only required when we map the reduced sample to $X$.
The cost of these steps is linear in the dimension $N$ of $X$, and quadratic in the dimension of the reduced basis $N_\RB$ for every sample.
In constrast, the cost of Algorithm \ref{Alg_SMK} is at least quadratic in $N$, for every sample.
Hence, for every sample, we need to solve an $O(N;N\rightarrow \infty)$ problem using RB, but an $O(N^2;N\rightarrow \infty)$ problem in the full space.
This clearly demonstrates the advantages of RB sampling.

\subsection{Reduced basis random field expansion}
\label{Subsec:RB_RF_expansion}
In Algorithm \ref{Alg_RBSMK} we describe sampling from the full random field $\theta$. 
However, this can be inefficient for two reasons:
\begin{enumerate}
\item In some applications, e.g. the Bayesian inverse problem, the random field samples need to be stored and kept in memory. 
This is often impossible due to memory limitations.
\item In some methods, e.g. the MCMC method in Algorithm \ref{Alg_PCNMCMCGibbs}, we need to assess random fields with respect to some covariance operators. 
Hence, not only the random fields have to be kept in memory but also at least one covariance operator.
This requires one order of magnitude more memory than a single random field, that is, $O(N^2; N \rightarrow \infty)$ compared to $O(N; N \rightarrow \infty)$.
\end{enumerate}
Observe that the reduced basis enables a natural compression of the full random field $\theta$.
Let $\theta \sim K(\cdot|\paramedet)$ for some $\paramedet \in R$. 
The reduced basis implies the representation
\begin{equation*}
\theta = m(\paramedet) + \Psi_0(\paramedet) \vxi = m(\paramedet) + W\Psi_0^\RB(\paramedet) \vxi
\end{equation*}
for some $\vxi \sim \mathrm{N}(0, \mathrm{Id}_{\Nsto})$.
We can represent $\theta$ in terms of $\theta_{\RB} := (\Psi_0^\RB(\paramedet) \vxi)\in \mathbb{R}^{{N_\RB}}$
which gives
\begin{equation}\label{RBRF}
\theta = m(\paramedet) + W \theta_\RB,
\end{equation}
the \textit{reduced basis expansion} of the random field $\theta$. 
Note that $\theta_\RB \sim \mathrm{N}(0, \mathcal{C}^{\RB,  \Nsto}(\paramedet))$ is a Gaussian random field on the reduced space.
In contrast to a KL expansion, the random field expansion in \eqref{RBRF} is not necessarily optimal when we compare the number of terms $N_\RB$ required to achieve a certain approximation accuracy in the mean-square error. 
%
There are many alternative random field expansions depending on the choice of the basis, such as the KL expansion of the non-Gaussian random field with covariance operator
 $$\overline{\mathcal{C}} := \int_R \mathcal{C}(\parame) \mu'(\mathrm{d}\parame).$$
or the eigenvectors $(\psi_i(\paramedet))_{i=1}^\infty$ for a fixed $\paramedet \in R$.
Various approaches for random field expansions have been proposed and compared in \cite{Sraj2016}.
We advocate the reduced basis random field expansion, since the offline-online decomposition has been computed for the basis $W$. 
A basis change requires the computation of new reduced operators $\mathcal{C}^{\RB,  \Nsto}(\paramedet)$ and $(\mathcal{C}_k^{\RB})_{k=1}^{N_{\lin}}$, respectively.

Observe that the covariance of $\theta$ for a fixed $\paramedet$ can be fully described by  $\mathcal{C}^{\RB,  \Nsto}(\paramedet) \in\mathbb{R}^{N_\RB \times N_\RB}$.
Hence, we can sample $\theta$ cheaply, and we can also represent the covariance of $\theta$ by a small matrix.
This is particularly useful in Bayesian inverse problems as we explain next.

Consider a Bayesian inverse problem with parameterised prior random field and assume that $m(\paramedet) \in X_\RB$, $\paramedet \in R$.
If this is not the case, one can either add $m(\paramedet)$ for the required $\paramedet \in R$ to $X_\RB$ or project $m(\paramedet)$ orthogonally onto $X_\RB$.
In Algorithm \ref{Alg_PCNMCMCGibbs} we need the full random field $\theta$ as input for the potential $\Phi$ to compute the acceptance probability of the pCN-MCMC proposal.
Moreover, to compute the acceptance probability of the $\tau$-proposal, again we need to construct the full precision operator of the proposed $\tau^* \in R$.
The computational cost of this construction is of order $O(N;N\rightarrow \infty)$.
However, since $\theta_\RB$ already contains the full covariance information, we can replace $\mathcal{C}(\tau^*)^{-1}$ by $\mathcal{C}^{\RB,  \Nsto}(\parame^*)^{-1}$.
Then, the computational cost of the Gibbs step in $\tau$-direction is independent of $N$.
We summarise the associated Reduced Basis MCMC method in Algorithm \ref{Alg_RBMCMC}.

\begin{algorithm}[htb]
Let $(\parame_0, \theta_{\RB,0}) \in \mathbb{R}^{N_\RB + 1}$ be the initial state of the Markov chain.

 \For{$n \in \{1,...,N_{\mathrm{smp}}\}$}{
 Sample $\parame^* \sim q_R(\cdot |\parame_{n-1})$ 
 
$\alpha_R(\parame_{n-1};\parame^*) \leftarrow \min\left\lbrace1, \frac{q_R(\parame_{n-1}|\parame^*)}{q_R(\parame^* |\parame_{n-1})}
\frac{\mathrm{n}(\theta_{\RB,n-1};W^*m(\tau^*),\mathcal{C}^{\RB,  \Nsto}(\tau^*))}{\mathrm{n}(\theta_{\RB,n-1};W^*m(\parame_{n-1}),\mathcal{C}^{\RB,  \Nsto}(\parame_{n-1}))}\right\rbrace$
 
 Sample $U_R \sim \mathrm{Unif}[0,1]$
 
 \eIf{$U_R < \alpha_R$}{
 $\parame_{n} \leftarrow \parame^*$
 }{$\parame_{n} \leftarrow \parame_{n-1}$}
 
 Sample $\theta^*_{\RB} \sim \mathrm{N}(\sqrt{1-\beta^2}\theta_{\RB,n-1}, \beta \mathcal{C}^{\RB,  \Nsto}(\parame_n))$
 
 $\alpha_X(\theta_{\RB,n-1};\theta^*_\RB) \leftarrow \min\{1,\exp(-\Pot(W\theta^*_{\RB})+\Pot(W\theta_{\RB,n-1}))\}$
 
 Sample $U_X \sim \mathrm{Unif}[0,1]$
 
 \eIf{$U_X < \alpha_X$}{
 $\theta_{\RB,n} \leftarrow \theta^*_{\RB}$
 }{$\theta_{\RB,n} \leftarrow \theta_{\RB,n-1}$}
 }
 \caption{Reduced Basis Markov Chain Monte Carlo}
 \label{Alg_RBMCMC}
\end{algorithm}

\section{Numerical experiments}\label{Sec:Numerical_Experiments}
In this section we illustrate and verify the reduced basis sampling for use with forward and Bayesian inverse problems.
We start by measuring runtime and accuracy of the reduced basis approximation to the parametric KL eigenproblems.
In Example \ref{Example_Verification}, we consider a forward and a Bayesian inverse problem in a low-dimensional test setting.
This allows us to compare the reduced basis sampling with the samples obtained by using the full, unreduced KL eigenproblems.
We then move on to high-dimensional estimation problems in Examples \ref{Example_flowcell}--\ref{Example_Bayesian_Estimation}.
Note that we are not able to compute reference solutions in the high-dimensional test cases within a reasonable amount of time since these are computationally very expensive.
Nevertheless, these examples are a proof-of-concept and showcase potential applications.

In Examples \ref{Example_flowcell}--\ref{Example_BIP_PDE} we consider the elliptic PDE 
\begin{equation} \label{EQ_PDE_Num_Exp}
- \nabla \cdot (\exp(\theta(\bsx))\nabla p(\bsx)) = f(\bsx) \qquad (\bsx \in D)
\end{equation}
on the unit square domain $D = (0,1)^2$ together with suitable boundary conditions.
The PDE \eqref{EQ_PDE_Num_Exp} is discretised with linear, continuous finite elements on a uniform, triangular mesh.
The coefficient function $\theta$ is a parameterised Gaussian random field with exponential covariance operator, and random correlation length and standard deviation, respectively (see Example \ref{Exponential Covariance_2}).
The spatial discretisation of $\theta$ is done with piecewise constant finite elements on a uniform, rectangular mesh.
{The evaluation of the covariance operator on this finite element space requires to evaluate an integral.
We approximate this integral using a composite midpoint rule, with one quadrature node in each finite element.}

We further discretise $\theta$ by a truncated KL expansion where we retain the leading $\Nsto$ terms. 
The parameter $\Nsto$ is selected such that the truncated KL captures at least 90\% of the total variance.
We list the random field parameters for Examples~\ref{Example_Verification}--\ref{Example_Bayesian_Estimation} in Table \ref{table_simulation_settings}.
We introduce the estimation problems in more detail in the following subsections. 
Note that we solve the test problems in Examples \ref{Example_Verification}--\ref{Example_Bayesian_Estimation} using the reduced basis samplers presented in \S\ref{Sec_RB_Sampling}.

\begin{table}[htb]
\centering
\begin{tabular}{l|rrrr}\hline
                             &   Example \ref{Example_Verification}  & Example \ref{Example_flowcell} & Example \ref{Example_BIP_PDE}  & Example \ref{Example_Bayesian_Estimation} \\ \hline
$\underline{\sigma}  $     &  1    & 0.1      &     0.5      &   0.1        \\
$\overline{\sigma}$        & 1      & 1       &      0.5     &     1      \\
$m_\sigma    $              & 1      & 0.5      &     0.5      &    0.5       \\
$\sigma_\sigma^2$   & 0& $0.1$    &  0         &     0.1      \\
$\underline{\ell}$     &0.3          & 0.3       & 0.3      & 0.1       \\
$\Nsto$               &200            & 100       & 100       & 800     \\ \hline
\end{tabular}
\caption{
	Random field parameters in Examples~\ref{Example_Verification}--\ref{Example_Bayesian_Estimation}.
}
\label{table_simulation_settings}
\end{table}
\subsection{Accuracy and speed up} \label{Subsec:Accuracy}
First we assess the accuracy and time consumption of the reduced basis approximation. 
We measure the quality of the reduced basis surrogate by comparing reduced basis eigenvalues with full eigenvalues in a simplified setting.
 The \textit{full matrix} is the finite element approximation of $\mathcal{C}_{\exp}^{(\ell,1)}$ with $100 \times 100$ piecewise constant finite elements.
The goal is to compute the leading 100 eigenpairs of $\mathcal{C}_{\exp}^{(\ell,1)}$ for selected values $\ell \in [0.1,\sqrt{2}]$.
We compute reference solutions for $\ell = 0.1, 0.5, 1.4$ using the full matrix.
The reduced basis is constructed using $10$ snapshots $$\ell^{\snap} = (2^{1/2}, (2^{-1/2}+1)^{-1}, (2^{-1/2}+2)^{-1},\dots,(2^{-1/2}+9)^{-1}).$$
We compute the leading 100 eigenpairs for all correlation lengths in  $\ell^{\snap}$, and assemble the associated eigenvectors in a single matrix.
Then we apply the POD and retain $N_\RB = 2^1,\dots,2^{13}$ orthonormal basis vectors.
Recall that the offline-online decomposition requires a linearisation of the covariance operator (see \S\ref{Subsec:Matern}).
Throughout this section (\S\ref{Sec:Numerical_Experiments}) we retain $N_\lin = 39$ linearisation terms. 
In this case the truncation error defined in \eqref{Eq_truncationerror_covar_kernel} is $9.09\cdot 10^{-5}$ for the linearisation.

\begin{figure}[htb]
\centering
\includegraphics[scale=0.65]{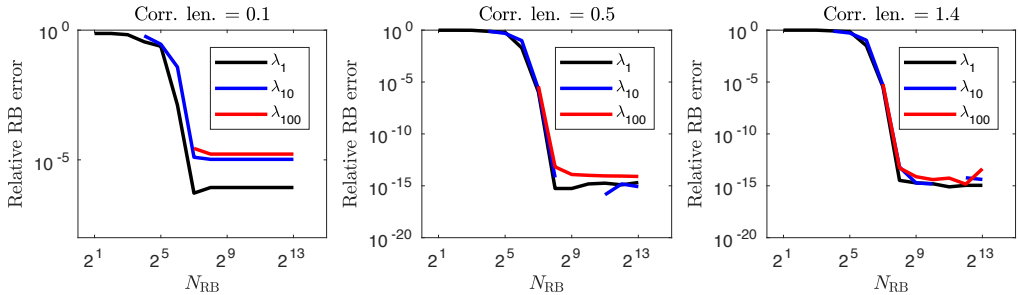}
\caption{Relative reduced basis error of the eigenvalues $\lambda_1(\ell),\lambda_{10}(\ell),\lambda_{100}(\ell)$ for correlation lengths $\ell = 0.1, 0.5, 1.4$ and reduced basis dimensions $N_\RB = 2^1,...,2^{13}$.}
\label{Fig_RB_Err}
\end{figure}

We plot the relative error of the reduced eigenvalue compared to the exact eigenvalue in Figure~\ref{Fig_RB_Err} for various reduced basis dimensions $N_\RB$.
Note that it is not possible to compute eigenvalues with an index larger than $N_\RB$.
For $\ell = 0.1$ the relative RB error stagnates at a level that is not smaller than $ 10^{-6}$. 
In further experiments not reported here we observed that this stagnation is caused by the linearisation error of the covariance kernel (recall that we use $N_\lin = 39$ terms with an error of order $10^{-5}$).
We remark that the root mean square error of the MC and MCMC estimation results in this section is of order $O(10^{-2})$.
Hence, an eigenvalue error of magnitude $ 10^{-6}$ is acceptable.
We point out, however, that a full error analysis of the reduced basis samplers (including the linearisation and RB error) is beyond the scope of this study.
For $\ell = 0.5$ and $\ell=1.4$ we achieve an accuracy of order $O(10^{-6})$ for $N_\RB \approx 128$.
For $N_\RB > 128$ the relative errors are of the size of the machine epsilon.
This error is unnecessarily much smaller than the sampling error mentioned above and introduces a higher computational cost in the online phase.
Hence, in our test problems $N_\RB = 128$ would be a sufficient choice.

To explore the speed-ups that are possible with reduced basis sampling we repeat the experiment.
This time we vary the dimension of the finite element space and use $N = 4^4,\ldots, 4^7$.
The dimension of the reduced basis is fixed with $N_\RB = 256$.
We plot the test results in Figure \ref{Fig_RB_timings}.
The time measurements correspond to serial simulations in \textsc{Matlab} with an {Intel i7 (2.6 Ghz)} CPU and {16 GB RAM} memory.
The dashed lines show the theoretical asymptotic behaviour, that is, $O(N; N \rightarrow \infty)$ for the reduced basis sampling and $O(N^2; N \rightarrow \infty)$ for the full sampling.
\begin{figure}[htb]
\centering
\includegraphics[scale=0.825]{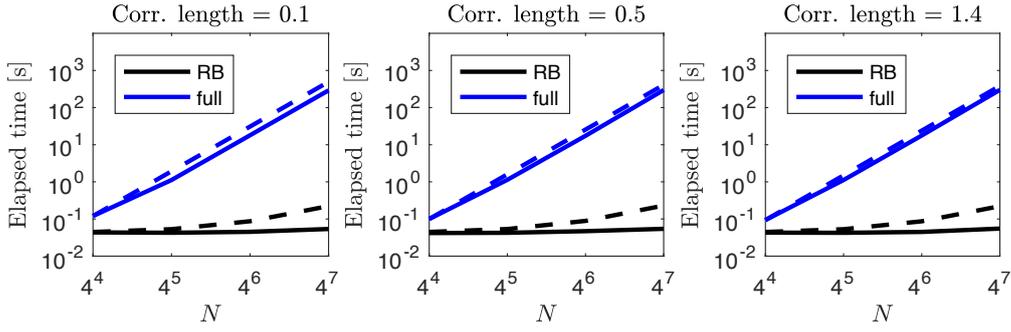}
\caption{Timings for the full and reduced problem with different FE resolutions and correlation lengths. The elapsed time is shown as solid line, and the asymptotic behaviour is shown as dashed line. }
\label{Fig_RB_timings}
\end{figure}
We see that the theoretical and observed timings for the full sampling are almost identical. 
In contrast, the observed timings for RB sampling are smaller than predicted by the theory.
This is caused by the fact that the dimension $N$ of the finite element space is quite small.
As $N$ increases we observe a massive speed-up of the reduced basis sampling compared to the full sampling.
For example, for $N = 4^7$ the reduced basis sampling requires less than $10^{-1}$ seconds, while the full sampling requires several minutes. 
In this case the speed-up in the online phase is of order $\mathcal{O}(10^3)$.

For the estimation problems in Examples~\ref{Example_Verification}--\ref{Example_Bayesian_Estimation} we use Monte Carlo and Markov Chain Monte Carlo with $10^4$ up to $1.5\cdot 10^5$ samples and a random field resolution with $N = 256 \times 256$ finite elements in space.
In these cases, MC and MCMC estimations based on the full KL eigenproblem would take a couple of days up to a couple of years to terminate.
In contrast, the (serial) run-time of the reduced basis sampling is $\sim 15$ minutes in Example \ref{Example_flowcell} and $\sim 18$ hours in Example \ref{Example_Bayesian_Estimation}.
Of course, standard Monte Carlo simulations are trivially parallelisable. 
MCMC is a serial algorithm by design, and parallelisation is not trivial, see e.g. \cite{VanDerwerken2013} for suitable strategies.
Our experiments show that RB sampling can reduce the computational cost without the need for parallelisation.

\subsection{Verification of reduced basis sampling} \label{Subse:Verification}
Next, we test the accuracy of RB sampling using coarse spatial discretisations.
This allows us to obtain reference solutions in a reasonable amount of time.
\begin{example} \label{Example_Verification}
Let $\mu$ be the joint probability measure in Example \ref{Exponential Covariance_2} together with the parameter values given in Table \ref{table_simulation_settings}.
We discretise the random field $\theta \sim \mu''$ using $32^2$ finite elements.
The test problems are as follows.
\begin{itemize}
\item[(a)] Forward uncertainty propagation: We consider a flow cell in 2D with log-permeability $\theta$. See Example \ref{Example_flowcell} for the definition of the flow cell.
Given the random coefficient $\theta$ we want to estimate the probability distribution of the outflow over the boundary $\mathcal{Q}(\theta)$.
We discretise the PDE with $2 \cdot 16^2$ finite elements.
\item[(b)] Bayesian inverse problem: We observe a random field realisation on $D=(0,1)^2$ at nine points in the spatial domain \begin{align*}D_{\mathrm{obs}} := \{(n/4, m/4): n,m = 1,2,3\}.
\end{align*}
In each of the points we observe the value $0.1$, which we assume to be noisy. We want to reconstruct the random field.
The prior measure is $\mu$ as specified above.
The likelihood is given by $$\exp\left(-\frac{1}{2 \cdot 10^{-2}}\sum_{x \in D_{\mathrm{obs}}}(0.1 - \theta(\bsx))^2\right).$$
We want to estimate the posterior mean and variance of the correlation length $\ell$ given the data $y$.
In addition, we compute the model evidence.
Note that this test problem is similar to Example \ref{Example_introductory} in the Introduction, see also the corresponding Figure \ref{Fig_Intro_Bayes}.
\end{itemize}
\end{example}
We solve the test problems in Example~\ref{Example_Verification}(a)--(b) with Monte Carlo.
In part (b) we use importance sampling with samples from the prior as proposal.
We solve (a) and (b) with reduced basis sampling as well as standard sampling based on the full (discretised) eigenvectors of the parameterised covariance operator.
The standard sampling serves as reference solution for the reduced basis sampling.
The reduced basis is constructed using the snapshot correlation lengths $\ell^\snap = (0.322, 0.433, 0.664, 1.414)$; these are simply the inverses of four equidistant points in the interval $[1/\sqrt{2}, 3.11]$ including the boundary points.
This choice clusters snapshots near zero which is desirable due to the singularity of the exponential covariance at $\ell = 0$. 
We apply the POD to construct three reduced bases with different accuracies $\underline{\lambda}:=10^{-1},10^{-5},10^{-9}$. 

For each of the settings we run 61 Monte Carlo simulations with $10^4$ samples each to estimate the mean and the variance of the pushforward measure $\mu''(\mathcal{Q}\in \cdot)$ in part (a), as well as the posterior mean, posterior variance and model evidence in the Bayesian inverse setting in part (b).
We compute a reference solution for all those quantities using $6.15 \times 10^5$ samples.
With respect to the reference solutions we compute the relative error of the 61 estimates in each setting. 
In Table \ref{table_Verification} we give the means and the associated standard deviations (StD) of the relative errors.
We observe that the (mean of the) relative error is of order $O(10^{-2})$ up to $O(10^{-3})$.

Moreover, in Table~\ref{table_Verification} we list the sample mean of the error between the full covariance operators and their representations on the reduced basis, measured in the Frobenius norm.
That is, we list the Monte Carlo estimate using $6.15 \times 10^5$ samples of the expression
\[
\mathbb{E}\left[\Vert \mathcal{C}^{\Nsto}-\mathcal{C}^{\text{RB},\Nsto}\Vert_F\right]
=
\int_{R} \Vert \mathcal{C}^{\Nsto}(\tau)-\mathcal{C}^{\text{RB},\Nsto}(\tau)\Vert_F \ \mathrm{d}\mu'(\tau).
\]
We observe that the error decreases as we include more vectors in the reduced basis, as expected.
\begin{table}[htb]
\centering
\begin{tabular}{l|llllll}\hline
    $\underline{\lambda}$                     & $10^{-1}$ & (StD)   & $10^{-5}$ & (StD) & $10^{-9}$ & (StD)\\ \hline
   
Pushforward mean  & 0.0055 & (0.0041) & 0.0057 &(0.0051) &0.0054 &(0.0040)\\

Pushforward variance  & 0.0451 & (0.0262) & 0.0442 &(0.0352) &0.0321 &(0.0282)\\ \hline

Evidence  & 0.0147 & (0.0262) & 0.0122 &(0.0352) & 0.0154 &(0.0282)\\

Posterior mean  & 0.0087 & (0.0070) & 0.0075 &(0.0062) &0.0082 &(0.0063)\\

Posterior variance  & 0.0856 & (0.0691) & 0.0733 &(0.0608) &0.0807 &(0.0619)\\ \hline
Mean covariance error & $2.609 \times 10^{-4}$&  & $1.240 \times 10^{-7}$ & & $3.606 \times 10^{-11}$ & \\\hline
\end{tabular}
\caption{Relative errors in the Monte Carlo estimation of the mean and variance of the pushforward measure, and posterior mean, posterior variance and model evidence in the Bayesian inverse problem (Example~\ref{Example_Verification}). 
Each error value is the mean taken over 61 simulations with $10^4$ samples each. 
The simulations are performed with reduced basis sampling with POD accuracies $\underline{\lambda}=10^{-1},10^{-5},10^{-9}$. 
The relative errors are computed with respect to a reference solution computed with $6.15 \times 10^{5}$ samples based on the full eigenproblem.
In the last line of the table we list the mean error between the full covariance operator and the operator represented in the reduced basis, measured in the Frobenius norm.}
\label{table_Verification}
\end{table}

\subsection{Forward uncertainty propagation} \label{Subsect_Forward_UQ}
Next we study the forward uncertainty propagation of a hierarchical random field by an elliptic PDE operator.
\begin{example} \label{Example_flowcell}
Consider a flow cell problem on $D=(0,1)^2$ where the flow takes place in the $x_1$-direction.
The boundary conditions are as follows,
\begin{align*}
p(\bsx) &= 0  \ \ \ \ (\bsx \in \{1\} \times [0,1]),\\
p(\bsx) &= 1 \ \ \ \ (\bsx \in \{0\} \times [0,1]),\\
\frac{\partial p}{\partial \vec{n}}(\bsx) &= 0   \ \ \ \ (\bsx \in (0,1) \times \{0,1\}).
\end{align*}
There are no sources within the flow cell ($f \equiv 0$). 
The random field $\theta$ is as in Example \ref{Exponential Covariance_2} with the parameters given in Table \ref{table_simulation_settings}. 
The PDE is discretised with $2 \times 128^2$ finite elements, and the random field with $256^2$ finite elements.
The quantity of interest is the outflow over the (western) boundary $\Gamma_{\mathrm{out}}:=\{0\} \times [0,1]$. 
It can be approximated by
$$\mathcal{Q}(\theta) := - \int_D \kappa(\theta) \nabla p \cdot \nabla \psi \mathrm{d}\bsx,$$
where $\psi|_{D\backslash\Gamma_{\mathrm{out}}} \approx 0$ and $\psi|_{\Gamma_{\mathrm{out}}} \approx 1$ (see e.g. \cite{Douglas1974}).
We discretise the outflow using a piecewise linear, continuous finite element function $\psi$ on the same mesh that we used for the PDE discretisation.
\end{example}

The log-permeability is modelled as a parameterised Gaussian random field.
We employ the reduced basis sampling with $N_\RB=191$. 
We construct the reduced basis analogously to the simple test setting in \S \ref{Subse:Verification}.
However, now we let $\Nsto = 100$, and remove vectors from the POD where the corresponding eigenvalue is smaller than $\underline{\lambda} = 10^{-10}$ (see also \S \ref{Subsubsec_Offline_Phase}).

We estimate the mean and variance of the output quantity of interest. 
We compare 24 estimations by computing the associated coefficient of variation (CoV) for the mean and variance estimator, respectively.
The CoV is defined as the ratio of the standard deviation of the estimator and the absolute value of its mean.
We present the estimation results in Table \ref{table_Sim_Forward}.
\begin{table}[htb]
\centering
\begin{tabular}{l|llll}\hline
                         & Mean & (CoV)    & Variance & (CoV) \\ \hline
MC estimate  & 157.286 & (0.0028) & 3012.2 &(0.0355)  \\ \hline
\end{tabular}
\caption{Mean and variance estimates with $10^4$ samples (Example \ref{Example_flowcell}). 
We compare these estimates to 23 further simulation results by computing the coefficient of variation within the 24 estimates.}
\label{table_Sim_Forward}
\end{table}
The small CoVs tell us that $N_{\smpl} = 10^4$ samples were sufficient to estimate the pushforward measure of the quantity of interest, as well as its mean and variance.
Note that with the reduced basis sampling a single Monte Carlo simulation run took about 18 minutes.

\subsection{Hierarchical Bayesian inverse problem}
We consider two hierarchical Bayesian inverse problems based on random fields. 
Note that we use again $256^2$ finite elements to discretise the random fields in both problems and $2 \times 128^2$ finite elements to discretise the elliptic PDE in Example~\ref{Example_BIP_PDE}.

\begin{example} \label{Example_BIP_PDE}
Consider the Bayesian estimation of a random field and its correlation length. The true underlying random field is propagated through the elliptic PDE \eqref{Eq_param_Gaussian} together with Dirichlet boundary conditions
\begin{equation*}
p(\bsx) = 0 \ \ \ \ (\bsx \in \partial D),\\
\end{equation*}
and 9 Gaussian-type source terms
\begin{equation*}
f(\bsx) = \sum_{n,m=1}^3 \mathrm{n}\left(\bsx_1;0.25n,0.001\right) \cdot \mathrm{n}\left(\bsx_2;0.25m,0.001\right).
\end{equation*}
We observe the solution $p$ at 49 locations.
In particular, the observation operator is given by
\[
\mathcal{O}(p) := (p(n/8,m/8) : n, m = 1,...,7).
\]
 \begin{figure}[htb]
\centering
\includegraphics[scale=0.2]{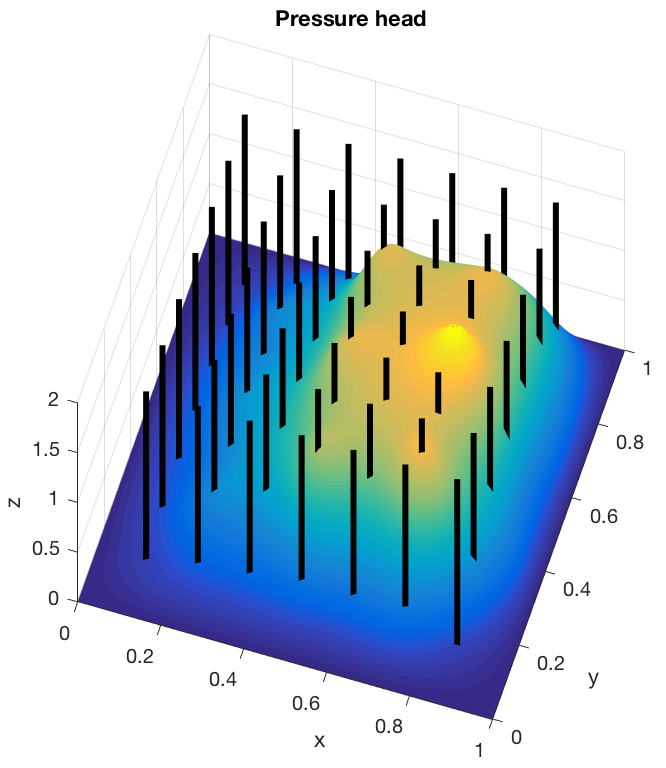} \hspace{0.75cm}
\includegraphics[scale=0.2]{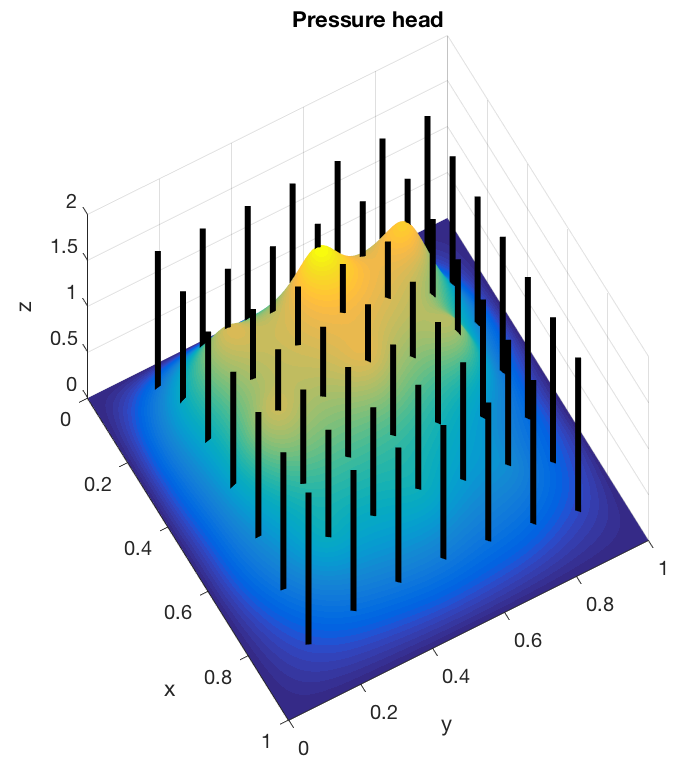}
\caption{Synthetic solution $p$ observed in the Bayesian inverse problem (Example \ref{Example_BIP_PDE}). The black lines indicate the measurement locations. 
	The figures show the model output for a log-permeability with correlation length $\ell = 0.5$ (left) and $\ell = 1.1$ (right), respectively.}
\label{Figure_PDE_pressure_Measurementlocation}
\end{figure}
The (synthetic) observations are generated with log-permeability fields that are samples of a Gaussian random field with exponential covariance operator with $\ell \in \{0.5, 1.1\}$ and $\sigma = \sqrt{1/2}$.
We show the corresponding PDE outputs and the measurement locations in Figure~\ref{Figure_PDE_pressure_Measurementlocation}.
The Gaussian random fields have been sampled with the full (unreduced) $\Nsto = 100$ leading KL terms.
Every observation is perturbed with i.i.d. Gaussian noise $\eta_1 \sim \mathrm{N}(0, 10^{-6})$.
We use the measure $\mu$ in Example \ref{Exponential Covariance_2} with parameter values given in Table \ref{table_simulation_settings} as prior measure. 
\end{example}

\begin{example} \label{Example_Bayesian_Estimation}
Consider the Bayesian estimation of a Gaussian random field together with its standard deviation and correlation length.
We observe the field directly, however, the observations are again noisy.
The estimations are performed with two data sets that have been generated with fixed hyperparameters $\ell \in \{0.2, 1.1\}$ and $\sigma = 1/(\sqrt{2} \cdot 256)$. 
We set $\sigma = 1/\sqrt{2}$ and rescale the KL eigenfunctions by $1/256$. 
The random field discretisation uses an $\Nsto = 800$ dimensional full (unreduced) KL basis.
We observe the random field at $2500$ positions.
Each observation is perturbed by i.i.d. Gaussian noise $\eta_1 \sim \mathrm{N}(0, 10^{-6})$.
The prior measure is the measure $\mu$ in Example \ref{Exponential Covariance_2} with parameter values given Table \ref{table_simulation_settings}. 
\end{example}

Note that in Examples~\ref{Example_BIP_PDE}--\ref{Example_Bayesian_Estimation} we use the same PDE and random field discretisation for the generation of the data and the estimation problem.
The reason is that we are mainly interested in the reduced basis error, and not in the reconstruction error of the inverse problem.
Note further that in Examples~\ref{Example_BIP_PDE}--\ref{Example_Bayesian_Estimation} the standard deviation $\sigma = \sqrt{1/2}$ is fixed a priori, and is not estimated.
The hierarchical Bayesian inverse problems in Example~\ref{Example_BIP_PDE}--\ref{Example_Bayesian_Estimation} are well-posed since the associated Bayesian inverse problem with fixed, deterministic hyperparameters is well-posed (see \cite{Dashti2011}), and since the hyperparameter set $R$ is compact.

\subsubsection{Observations from PDE output} \label{Subsubsect_Observations_from_PDE_Output}
We consider Example~\ref{Example_BIP_PDE} and the settings in Table~\ref{table_simulation_settings}.
The Reduced Basis MCMC method presented in Algorithm~\ref{Alg_RBMCMC} is used to sample from the posterior measure.
The correlation length $\ell \in [0.3, \sqrt{2}]$.
Since this is the same range as in Example \ref{Example_flowcell} we reuse the reduced basis computed in Example \ref{Example_flowcell}.
Recall that the standard deviation $\sigma = 1/\sqrt{2}$ of the random field  $\theta$ is fixed and not estimated.
Moreover, we assume that the observational noise is given by $\eta \sim \mathrm{N}(0,10^{-3} \mathrm{Id})$. 
This corresponds to a noise level of $\sqrt{10^{-3} }/\|y\|_Y \approx 0.6\%$.

We perform experiments for two synthetic data sets with $\ell=0.5$ and $\ell=1.1$, respectively.
For both data sets we compute a Markov chain of length $N_{\smpl}  = 10^5$.
To avoid burn-in effects the initial states are chosen close to the true parameter values for the Markov chains.
In a setting with real world data it is often possible to obtain suitable initial states with Sequential Monte Carlo (see \cite{Beskos2015}).

\begin{figure}[htb]
\centering
\includegraphics[scale=0.52]{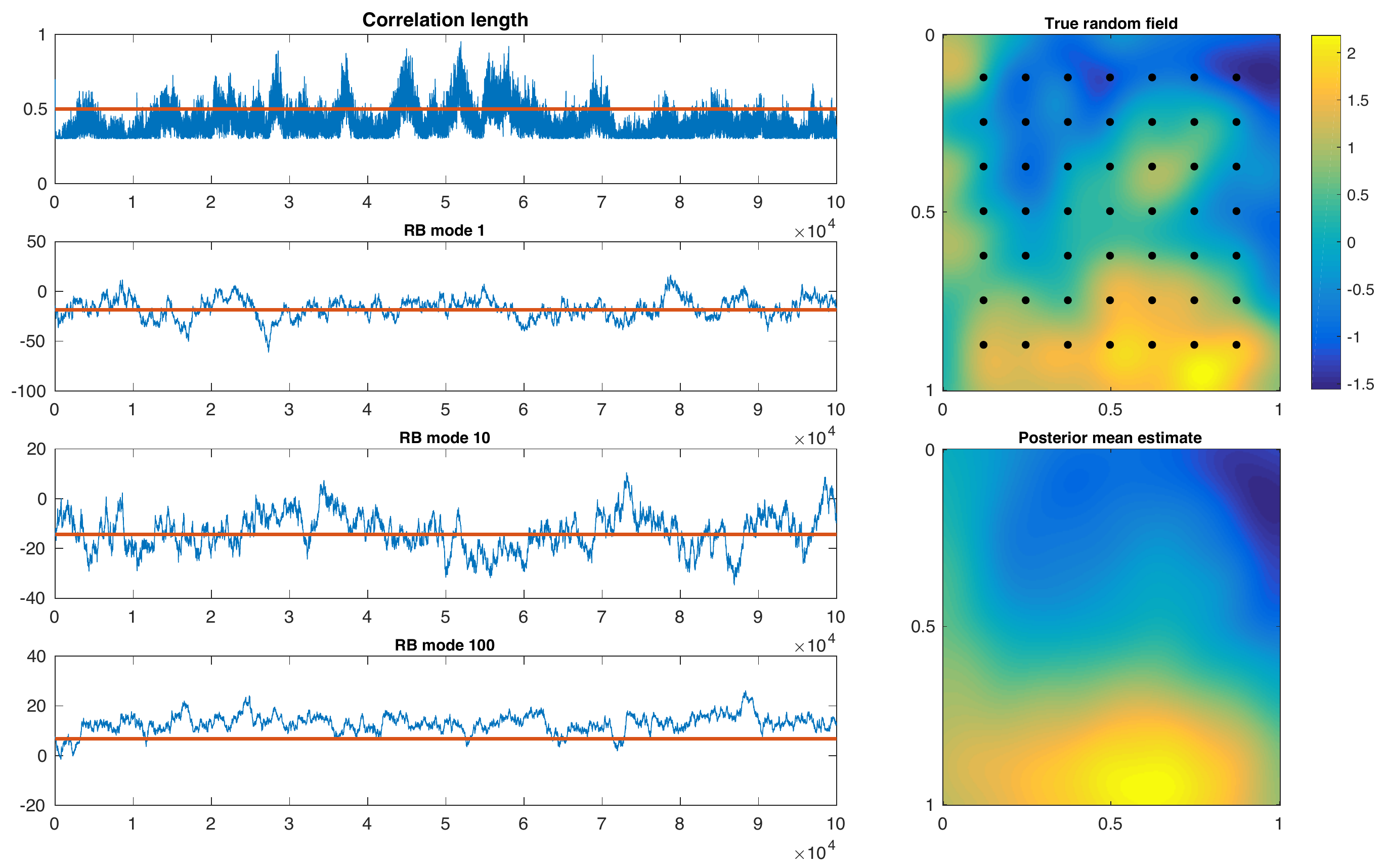}
\caption{Results of the MCMC estimation (Example \ref{Example_BIP_PDE}, $\ell = 0.5$).  
	The top-right plot shows the synthetic truth together with the measurement locations (black dots). Below we plot the posterior mean estimate computed with MCMC. The four path plots on the left side of the figure show the Markov chains for the correlation length $\ell$, and the reduced basis modes $(\theta_\RB)_1$, $(\theta_\RB)_{10}$, and $(\theta_\RB)_{100}$, respectively. The red lines mark the truth.}
\label{FIgure_MCMC_BIP_ell0_5}
\end{figure}

\begin{figure}[htb]
	\centering
	\includegraphics[scale=0.52]{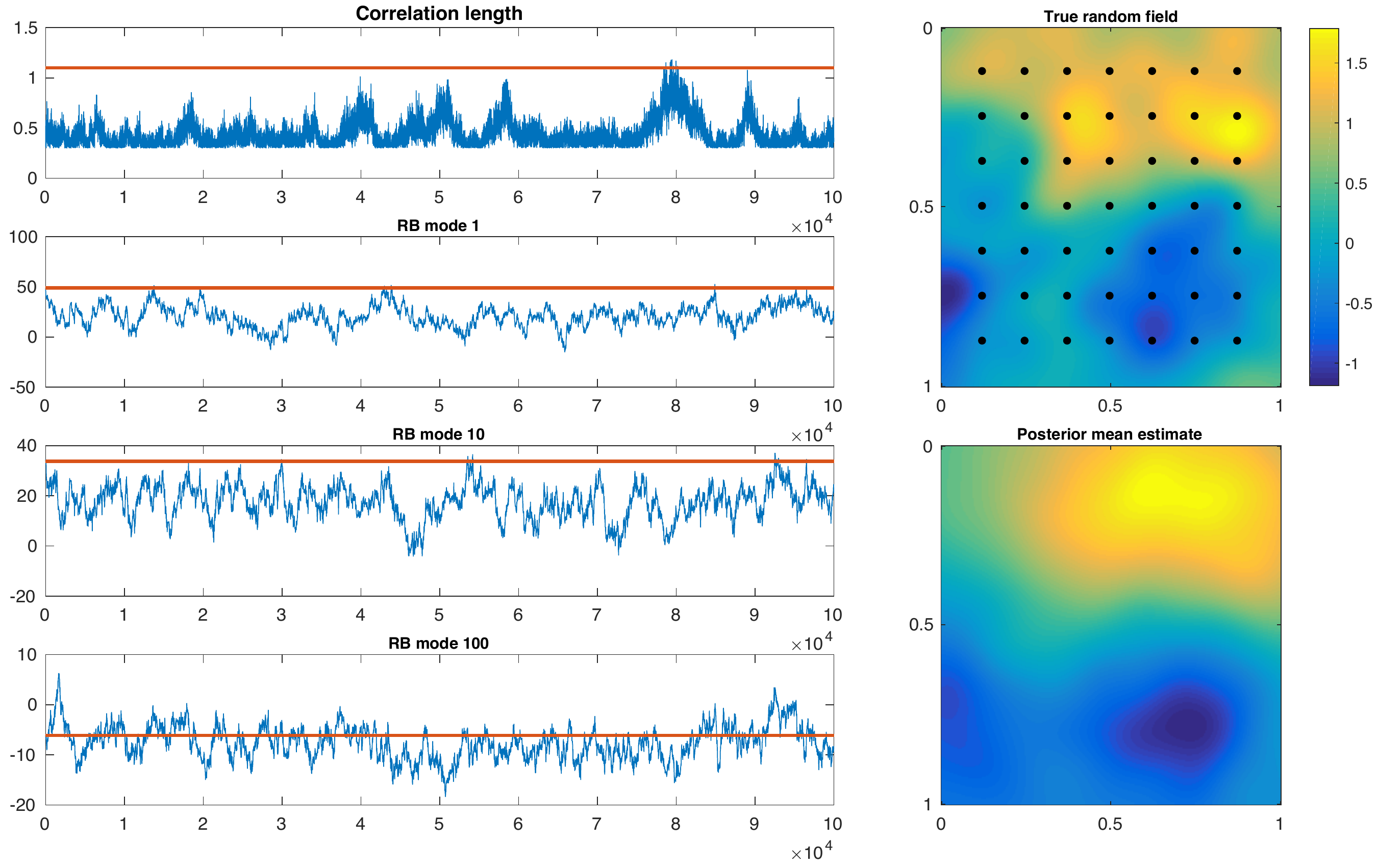}
	\caption{Results of the MCMC estimation (Example \ref{Example_BIP_PDE}, $\ell = 1.1$).  
		The top-right plot shows the synthetic truth together with the measurement locations (black dots). Below we plot the posterior mean estimate computed with MCMC. The four path plots on the left side of the figure show the Markov chains for the correlation length $\ell$, and the reduced basis modes $(\theta_\RB)_1$, $(\theta_\RB)_{10}$, and $(\theta_\RB)_{100}$, respectively. The red lines mark the truth.}
	\label{FIgure_MCMC_BIP_ell1_1}
\end{figure}

The estimation results are depicted in Figure \ref{FIgure_MCMC_BIP_ell0_5} and Figure \ref{FIgure_MCMC_BIP_ell1_1}.
We observe in both figures that the Markov chain for $\ell$ mixes very fast, however, it takes some time for the Markov chains of the reduced basis modes to explore the whole space.
To investigate this further we conduct a heuristic convergence analysis.
To this end we consider multiple Markov chains (see \S 12.1.2 in \cite{Robert2004} for a review of MCMC convergence analysis with multiple Markov chains).
For each of the two test data sets we compute $4$ additional Markov chains starting at different initial states. 
In results not reported here we observed a similar mixing and coverage of the parameter space of the additional chains.
Given these mixing properties, it can reasonably be assumed that the Markov chains have reached the stationary regime.

Moreover, we have computed posterior mean and posterior variance of the correlation length parameter $\ell$ for each of the 5 Markov chains.
The accuracy of these estimates is assessed by computing the coefficient of variation (CoV) within these five estimates.
We tabulate the posterior mean and variance estimates for $\ell$ of a single Markov chain as well as the associated CoVs in Table \ref{Table_Estimation_results_corr_BIP}.
The single Markov chains in this table are the chains shown in Figures \ref{FIgure_MCMC_BIP_ell0_5}-\ref{FIgure_MCMC_BIP_ell1_1}.
The coefficients of variation of the posterior mean and variance estimates are considerably small. 
This tells us that the posterior mean and variance estimates are reasonably accurate.

\begin{table}[htb]
\centering
\begin{tabular}{l|llll}\hline
                         & Mean & (CoV)  & Variance & (CoV) \\ \hline
MCMC $\ell$ given $y$ (Truth: $\ell = 0.5$)   & 0.4105 & (0.0040)& 0.0081 & (0.3235)  \\
MCMC $\ell$ given $y$ (Truth: $\ell = 1.1$)   & 0.4403& (0.0524) & 0.0157 & (0.2346)  \\ \hline
\end{tabular}
\caption{Estimation results of the Bayesian inverse problem with observations from PDE output (Example \ref{Example_BIP_PDE}). We tabulate the posterior mean and variance estimates of the correlation length $\ell$ of one Markov chain each and the CoVs within the estimates of 5 different Markov chains.}
\label{Table_Estimation_results_corr_BIP}
\end{table}
\paragraph{Discussion of the estimation results}
The correlation length is underestimated in both cases. 
In the first case, where the true parameter is given by $\ell = 0.5$, the posterior mean is close to the true parameter. 
The relative distance between truth and posterior mean is about 18\%.
In the second setting, where in truth $\ell = 1.1$, the posterior mean is far away from the true parameter. 
Here, the relative distance between truth and posterior mean is about 60\%.
In both cases, we conclude that the data likelihood was not sufficiently informative to estimate the correlation length more accurately.
However, we note that the succession of the estimates is correct: The posterior mean estimate in the problem with the larger true correlation is larger than the posterior mean estimate in the other case.
Hence we observe a certain consistency with the data in the estimation.

\subsubsection{Observations from a random field} \label{Subsubsect_Observations_from_RF}
Finally, we consider Example \ref{Example_Bayesian_Estimation}.
Here, we allow for much smaller correlation lengths $\ell \in [0.1, \sqrt{2}]$. 
Moreover, we consider an uncertain standard deviation $\sigma$.
This requires more KL terms for an accurate approximation, in particular, we use the leading 800 KL terms.
This also means that we cannot reuse the reduced basis computed in Example \ref{Example_flowcell}.
Instead, we construct a reduced basis as follows.
We solve the KL eigenproblem for $5$ snapshots $$\ell^\snap =  (0.1148, 0.1491, 0.2124, 0.3694, 1.4142)$$ of the correlation length.
The rationale behind this choice is explained in \S \ref{Subsect_Forward_UQ}.
Given the collection of snapshot KL eigenvectors we apply a POD and retain only the basis vectors with $\lambda^{\snap}_i \geq \underline{\lambda} = 10^{-10}$.
Note that we can compute the dependency of $\sigma$ on the eigenpairs analytically and that we do not need to consider them when constructing the reduced basis.

Recall that in this example the observational noise is given by $\eta \sim \mathrm{N}(0,10^{-4} \mathrm{Id})$. This corresponds to a noise level of $\sqrt{10^{-4} }/\|y\|_Y \approx 6.6\%$.
\begin{figure}[htb]
\centering
\includegraphics[scale=0.45]{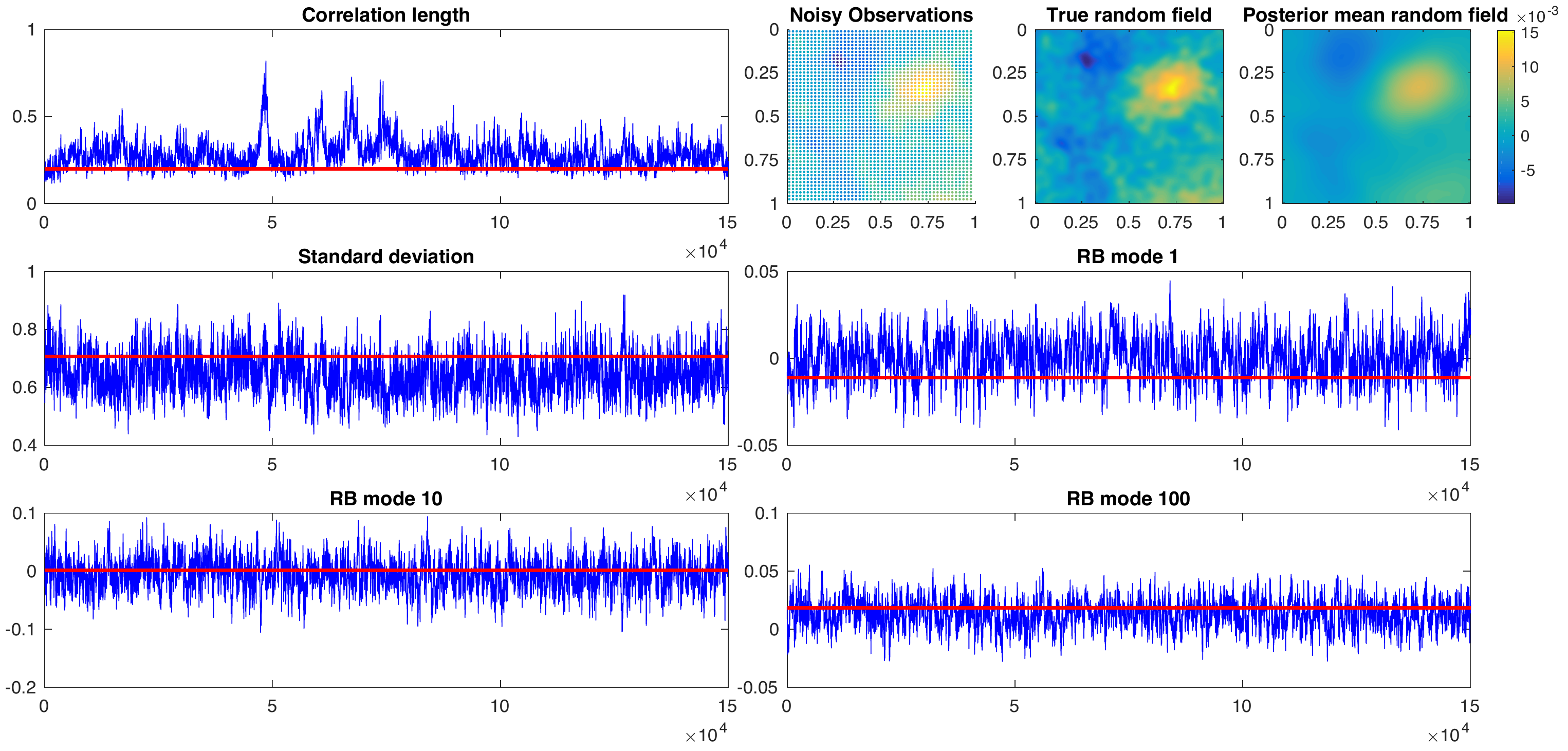}
\caption{Results of the MCMC estimation given random field observations (Example \ref{Example_Bayesian_Estimation}, $\ell=0.2$, $\sigma=1/\sqrt{2}$). 
	In the top-right corner we plot the positions and values of the noisy observations (left), the synthetic truth (middle), and the posterior mean (right). 
	The five path plots show the Markov chains for  $\ell$ and $\sigma$, and the reduced basis modes $(\theta_\RB)_1$, $(\theta_\RB)_{10}$, $(\theta_\RB)_{100}$, respectively. The red lines mark the truth.}
\label{Figure_MCMC_RF_0_2}
\end{figure}
We employ the Reduced Basis MCMC sampler in Algorithm \ref{Alg_RBMCMC} to generate $N_{\smpl}  = 1.5 \times 10^5$ samples of the posterior measure.
We present the Markov chains and estimation results in Figure \ref{Figure_MCMC_RF_0_2} and in Figure \ref{Figure_MCMC_RF_1_1}.
We observe a fast mixing of the Markov chains.
To conduct a heuristic convergence assessment we again compute 4 additional Markov chains with $N_{\smpl}  = 1.5 \times 10^5$ samples each and different initial states.
We found that the additional Markov chains mix similarly compared to the Markov chains shown in Figures \ref{Figure_MCMC_RF_0_2}--\ref{Figure_MCMC_RF_1_1}.
They also cover the same area of the parameter space.
Hence, it can reasonably be concluded that the Markov chains reached a stationary regime.
\begin{table}[htb]
\centering
\begin{tabular}{l|llll}\hline
                         & Mean & (CoV)  & Variance & (CoV)  \\ \hline
MCMC $\ell$  given $y$ (Truth: $\ell = 0.2$)   & 0.2847 & (0.0465) & 0.0064 & (0.1520)    \\ 
MCMC $\sigma$  given $y$ (Truth: $\sigma =1/\sqrt{2}$)   & 0.6438 & (0.0077) & 0.0042& (0.0207)   
\\ \hline
MCMC $\ell$  given $y$ (Truth: $\ell = 1.1$)   & 0.7248 & (0.0161) & 0.0575 & (0.1308)   \\ 
MCMC $\sigma$  given $y$ (Truth: $\sigma =1/\sqrt{2}$)   & 0.5484 & (0.0096) & 0.0052  & (0.0453)  
\\ \hline
\end{tabular}
\caption{Estimation results of the Bayesian inverse problem with observations from a random field (Example \ref{Example_Bayesian_Estimation}). We tabulate the posterior mean and variance of the correlation length $\ell$ and standard deviation $\sigma$.}
\label{Table_Estimation_results_corr_RF_obs}
\end{table}

In addition we present in Table \ref{Table_Estimation_results_corr_RF_obs} the posterior mean and posterior variance estimates of $\ell$ and $\sigma$ associated with the Markov chains given in Figures \ref{Figure_MCMC_RF_0_2}--\ref{Figure_MCMC_RF_1_1}.
To assess the accuracy of these estimates we compare them with the posterior mean and variance estimates of the 4 other Markov chains by computing the coefficients of variations of the estimators.
Again, the coefficients of variation are reasonably small.
\paragraph{Discussion of the estimation results}
While the likelihood was rather uninformative in the PDE-based Bayesian inverse problem, we see overall more consistent estimates in Example~\ref{Example_Bayesian_Estimation}.
For the short correlation length $\ell=0.2$ the relative distance between posterior mean and truth is 42\%.
The long correlation length $\ell=1.1$ is again underestimated.
The relative distance between truth and posterior mean is 34\% in this case.
This result could be explained by the uncorrelated noise that has an influence on the observation of the correlation structure.
In particular, we actually observe a random field $\theta' := \theta + \eta'$, where $\theta \sim \mathrm{N}(0, \mathcal{C}_{\exp}^{({\ell}, {\sigma})})$ and $\eta' \sim \mathrm{N}(0, \sigma^2_* \cdot \mathrm{Id}_X)$, for some $\sigma^2_* > 0$.
In this situation, the random field $\eta'$ can be understood as a random field with correlation length $0$.
This might explain the underestimation of the correlation lengths.
The standard deviations are slightly underestimated and some of the reduced basis modes are overestimated -- this is a consistent result.
The posterior mean random fields appear to be smoother than the true random fields.
This might be due to the high noise level.

\begin{figure}[htb]
\centering
\includegraphics[scale=0.45]{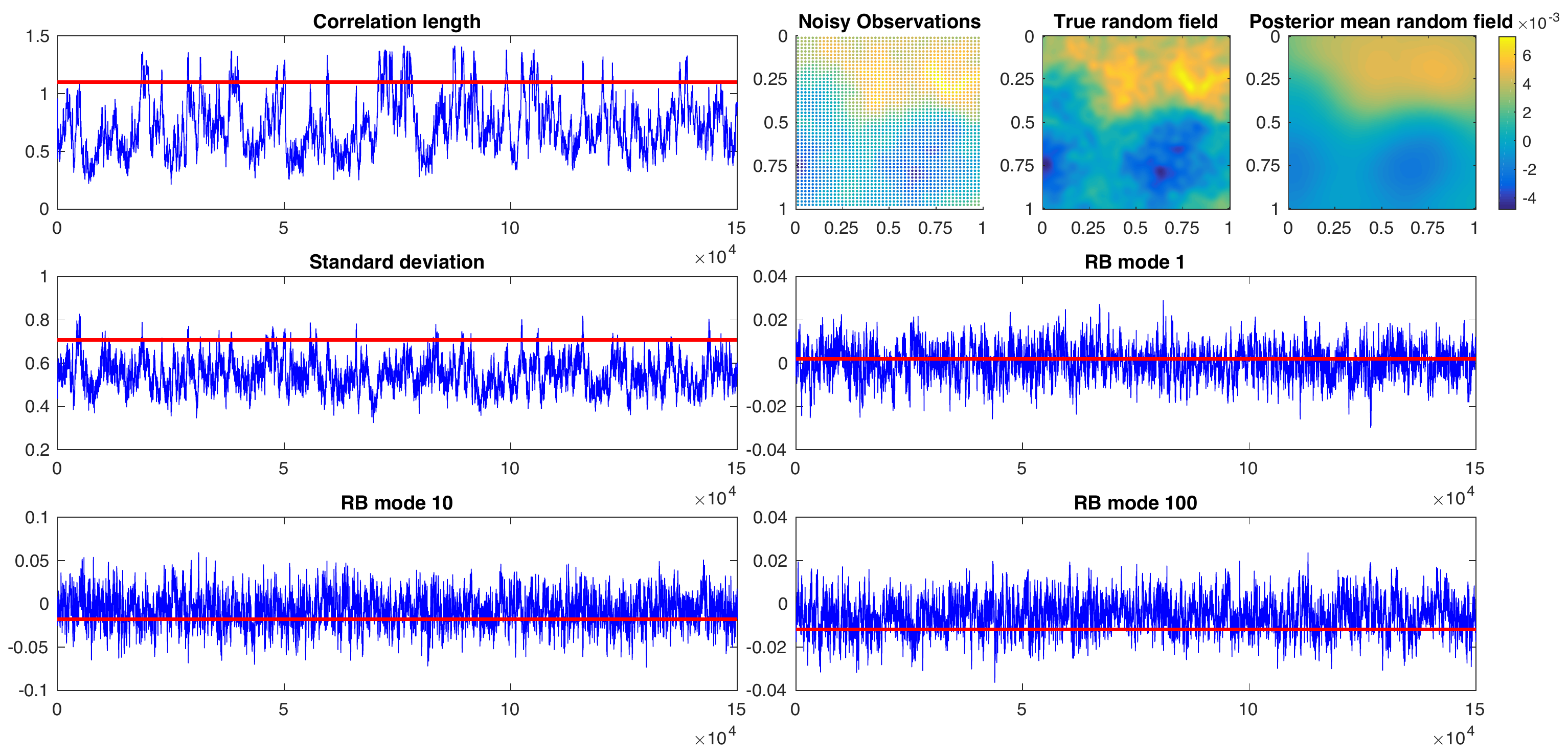}
\caption{Results of the MCMC estimation given random field observations (Example \ref{Example_Bayesian_Estimation}, $\ell=1.1$, $\sigma=1/\sqrt{2}$). 
	In the top-right corner we plot the positions and values of the noisy observations (left), the synthetic truth (middle), and the posterior mean (right). 
	The five path plots show the Markov chains for  $\ell$ and $\sigma$, and the reduced basis modes $(\theta_\RB)_1$, $(\theta_\RB)_{10}$, $(\theta_\RB)_{100}$, respectively. The red lines mark the truth.}
\label{Figure_MCMC_RF_1_1}
\end{figure}

\section{Conclusions}
We developed a mathematical and computational framework for working with parameterised Gaussian random fields arising from hierarchical forward and inverse problems in uncertainty quantification.
Under weak assumptions we proved the well-posed\-ness of the associated hierarchical problems.
We discretised the family of parameterised Gaussian random fields by (parametric) KL expansions.
We showed how the overall discretisation cost can be reduced substantially by a reduced basis surrogate for the parametric KL eigenpairs.
Moreover, we developed a reduced basis sampler for use with Monte Carlo and Markov chain Monte Carlo.
For Mat\'ern-type covariance operators with uncertain correlation length we suggested and analysed a linearisation technique to enable an efficient offline-online decomposition for the reduced basis solver.
We illustrated the accuracy and speed-up of the reduced basis surrogate and RB sampling in simple low-dimensional test problems.
Finally, we applied the reduced basis sampling to more realistic, high-dimensional, forward and Bayesian inverse test problems in 2D physical space.
The test results illustrate that the parametric KL eigenproblem can be approximated with acceptable accuracy by a reduced basis surrogate.
Moreover, the RB sampling gives acceptable accuracies compared to the full, unreduced sampling.
This enables an efficient hierarchical uncertainty quantification with parameterised random fields.
Of course, the size of the acceptable error level is heavily problem dependent, and our numerical experiments are a proof-of-concept.
A rigorous error analysis of the reduced basis samplers including the RB error and the linearisation error is beyond the scope of this study.

\begin{appendix}
\section{Parameterised Gaussian measures in forward and inverse uncertainty quantification} \label{Sec_Appendix_UQ}
In this appendix, we describe a general setting for forward uncertainty propagation and an associated Bayesian inverse problem.
Importantly, we investigate the well-posedness of these problems if the uncertain elements are modeled by \textit{parameterised} Gaussian random fields.
This is a necessary extension of the by now well-established solution theory for Gaussian random inputs. 
We conclude this section by revisiting the forward and inverse problem, respectively, and describe sampling based methods to approximate their solutions.
Both these methods require repeated sampling from a family of Gaussian random fields with different means and covariance operators each.

\subsection{Forward uncertainty propagation}\label{sec:fup}
We consider a classical model problem given by the elliptic partial differential equation (PDE)
\begin{align} \label{EQ_PDE}
- \nabla \cdot \left(\exp(\theta(\bsx)) \nabla p(\bsx)\right)&= f(\bsx) \qquad \forall x \in D,
\end{align}
together with suitable boundary conditions.
The PDE in \eqref{EQ_PDE} models the stationary, single-phase, incompressible flow of fluid in a permeable domain $D \subset \mathbb{R}^d$, combining Darcy's law and a mass balance equation. 
$p$ is the fluid pressure, $f$ is a source term, and $\theta$ is the spatially varying $\log$-permeability of the fluid reservoir. 
We assume that $D$ is connected, open and bounded, and that $\theta$ is an element of a separable Hilbert space $X$.
We define the PDE solution operator
\begin{equation*}
\mathcal{S}: X' \rightarrow H_0^1(D), \qquad \theta \mapsto p=\mathcal{S}(\theta),
\end{equation*}
where $X' \subseteq X$.
If $X = \mathcal{L}^2(D; \mathbb{R})$, a typical choice for $X'$ is the separable Banach space of continuous functions $C^0(D) := \{f \vert f\colon\overline{D} \rightarrow \mathbb{R} \text{ continuous}\}$.

We now consider $\theta$ to be uncertain.
We model the spatially varying uncertainty in $\theta$ by assuming that $\theta$ is a (parameterised) Gaussian random field with continuous realisations almost surely (a.s.).
Forward propagation of uncertainty means in our setting that we want to solve the elliptic PDE \eqref{EQ_PDE} with a random coefficient function $\theta$.
For a well-posed problem, the solution of \eqref{EQ_PDE} is a probability measure on $H_0^1(D)$.

We often consider a scalar-valued quantity of interest $Q: H_0^1(D) \rightarrow \mathbb{R}$  instead of the full space $H_0^1(D)$. 
For convenience we define $\mathcal{Q} = Q \circ \mathcal{S}$ which maps the uncertain parameter directly to the quantity of interest.
The forward propagation of uncertainty is modeled by the push-forward measure
\begin{equation*}
\mathcal{Q}^{\#}\mu'' := \mu''(\mathcal{Q} \in \cdot) := \mathbb{P}(\mathcal{Q}(\theta)\in \cdot),
\end{equation*}
where $\theta \sim \mu''$. 
If $\mu''$ is a Gaussian measure, the well-posedness and properties of $\mathcal{Q}^{\#}\mu''$ have been studied in e.g. in \cite{Charrier2012, Ernst2014}.
In the following, we extend this theory and discuss the existence of $\mathcal{Q}^{\#}\mu''$ with respect to the possibly non-Gaussian measure $\mu''$.
Moreover, we also study the existence of moments of $\mathcal{Q}^{\#}\mu''$.
To this end we make the following assumptions.
\begin{assumptions}\label{assumptions:forward}
\begin{itemize}
\item[(a)] $\mathcal{Q}^{\#}K(\cdot | \paramedet) :=  K(\mathcal{Q} \in \cdot|\paramedet) :=  \mathbb{P}(\mathcal{Q}(\theta)\in \cdot| \parame = \paramedet)$ is well-defined for $\mu'$-almost every $\paramedet \in R$.
\item[(b)] For some $k \in \mathbb{N}$ it holds
\begin{equation*}
m_k(\paramedet) := \int \mathcal{Q}(\theta)^k K(\mathrm{d}\theta|\paramedet)< \infty
\end{equation*}
for $\mu'$-almost every $\paramedet \in R$ and $\int m_k(\parame) \mu'(\mathrm{d}\parame) < \infty$.
\end{itemize}	
\end{assumptions}

\begin{theorem}
Let Assumption~\ref{assumptions:forward} hold.
Then, the measure $\mathcal{Q}^{\#}\mu''$ is well-defined.
Moreover, $\int \mathcal{Q}(\theta)^k \mu''(\mathrm{d}\theta) < \infty$ where $k\in \mathbb{N}$ is as in Assumption~\ref{assumptions:forward}(b).
\end{theorem}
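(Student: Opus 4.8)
The plan is to exploit the two-stage structure of $\mu$ in \eqref{mu} and to transfer both assertions from the Markov kernel $K$ to the marginal $\mu''$ by a disintegration argument. The only subtlety in the well-posedness statement is that $\mathcal{Q} = Q \circ \mathcal{S}$ is defined only on the domain $X' \subseteq X$ of the solution operator; hence ``$\mathcal{Q}^{\#}\mu''$ is well-defined'' really means that $\mu''$ is concentrated on $X'$ and that $\mathcal{Q}$ is $\mu''$-measurable there. Both the concentration statement and the moment bound will follow by inserting the kernel representation \eqref{mu} and integrating in the order $\theta$ first, $\paramedet$ second.

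First I would establish $\mu''(X') = 1$. Combining \eqref{mu2} with the representation \eqref{mu} gives
\begin{equation*}
\mu''(X') = \mu(R \times X') = \int_R K(X' \mid \paramedet)\,\mu'(\mathrm{d}\paramedet).
\end{equation*}
By Assumption~\ref{assumptions:forward}(a) the pushforward $\mathcal{Q}^{\#}K(\cdot \mid \paramedet)$ is well-defined for $\mu'$-almost every $\paramedet$, which is precisely the statement that $K(\cdot \mid \paramedet)$ places full mass on the domain $X'$ of $\mathcal{Q}$, i.e.\ $K(X' \mid \paramedet) = 1$ for $\mu'$-a.e.\ $\paramedet$. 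Substituting this identity yields $\mu''(X') = 1$. Since $\mathcal{S}$ and $Q$ are measurable, their composition $\mathcal{Q}$ is a measurable map on $X'$, and as it is now defined $\mu''$-almost surely, the pushforward $\mathcal{Q}^{\#}\mu''$ is a well-defined Borel probability measure on $\mathbb{R}$.

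For the moment bound I would use that $\mathcal{Q}$ depends only on $\theta$ and that $\mu''$ is the $\theta$-marginal of $\mu$, so that integrating $|\mathcal{Q}|^k$ against $\mu''$ equals integrating it against $\mu$. Applying \eqref{mu} and Tonelli's theorem to the nonnegative integrand $|\mathcal{Q}|^k$ — which needs no integrability precondition — gives
\begin{equation*}
\int_X |\mathcal{Q}(\theta)|^k\,\mu''(\mathrm{d}\theta) = \int_R \left( \int_X |\mathcal{Q}(\theta)|^k\, K(\mathrm{d}\theta \mid \paramedet)\right)\mu'(\mathrm{d}\paramedet) = \int_R m_k(\paramedet)\,\mu'(\mathrm{d}\paramedet) < \infty,
\end{equation*}
where the inner integral is $m_k(\paramedet)$ and the final finiteness is Assumption~\ref{assumptions:forward}(b). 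The measurability of $\paramedet \mapsto m_k(\paramedet)$ needed to make the outer integral meaningful is not an extra hypothesis: it follows from the defining property of a Markov kernel that integrating a measurable function against $K(\cdot \mid \paramedet)$ yields a measurable function of $\paramedet$. Finiteness of the absolute $k$-th moment then immediately gives $\int_X \mathcal{Q}(\theta)^k\,\mu''(\mathrm{d}\theta) < \infty$.

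I expect the main point requiring care to be the Fubini/Tonelli interchange for the Markov kernel $K$, i.e.\ the justification that the iterated integral over $R$ and $X$ reproduces the integral against the jointly constructed measure $\mu$ of \eqref{mu}. Working throughout with the nonnegative integrand $|\mathcal{Q}|^k$ (so that Tonelli applies unconditionally) sidesteps any sign bookkeeping that would otherwise arise for odd $k$, and reading the finiteness in Assumption~\ref{assumptions:forward}(b) as $K(\cdot\mid\paramedet)$-integrability of $\mathcal{Q}^k$ closes the remaining gap. Apart from this measure-theoretic step, the result is a direct consequence of the construction of $\mu$ together with the two parts of Assumption~\ref{assumptions:forward}.
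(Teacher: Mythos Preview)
Your proof is correct and follows essentially the same disintegration argument as the paper: both use that $\mu'' = \int_R K(\cdot\mid\paramedet)\,\mu'(\mathrm{d}\paramedet)$ to transfer well-definedness and the moment bound from the kernels $K(\cdot\mid\paramedet)$ to the marginal $\mu''$. Your version is considerably more careful than the paper's---the paper dispatches the moment claim in a single line (``the finiteness of the moments can be shown analogously'')---and your attention to the domain issue $X'\subseteq X$, the use of $|\mathcal{Q}|^k$ to justify Tonelli unconditionally, and the measurability of $\paramedet\mapsto m_k(\paramedet)$ are all welcome points of rigor that the paper leaves implicit.
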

\begin{proof}
By Assumption~\ref{assumptions:forward}(b), $\mathcal{Q}^{\#}K(\cdot | \paramedet)$ is well-defined and a probability measure for $\mu'$-almost every $\paramedet \in R$.
Hence, 
$$\mathcal{Q}^{\#}\mu'' = \int_{R} K(\cdot |\parame) \mu'(\mathrm{d}\parame)$$
is well-defined and a probability measure.
The finiteness of the moments can be shown analogously.
\end{proof}
Typically, $\mathcal{Q}^\#\mu''$ cannot be computed analytically.
We discuss the approximation of this measure by standard Monte Carlo in \S\ref{Subsec_Sampling_UQ}.

\subsection{Bayesian inverse problem} \label{Subsec_BIP}
Again, we consider the PDE \eqref{EQ_PDE} with a random coefficient $\theta \sim \mu_0$.
First, we discuss the standard (``Gaussian'') setting of a Bayesian inverse problem by assuming that $\mu_0$ is a Gaussian measure.

The starting point are noisy observations that we want to use to learn the uncertain parameter $\theta$.
We proceed by the Bayesian approach to inverse problems and compute the conditional probability measure of $\theta$ given the observations.
This measure is called \textit{posterior measure} - it reflects the knowledge about $\theta$ after observing the data.
In contrast, the knowledge about $\theta$ without observations is modelled by $\mu_0$, the \textit{prior measure}.

Let $\mathcal{O}: H_0^1 \rightarrow \mathbb{R}^{N_{\mathrm{obs}}}$ be a linear operator. 
It is called \textit{observation operator} and maps the PDE solution to the observations.
$\mathcal{G} := \mathcal{O} \circ \mathcal{S}$ is called \textit{forward response operator} and maps the uncertain parameter $\theta$ to the observations.
Let $y \in  \mathbb{R}^{N_{\mathrm{obs}}}$ denote the observations.
We assume that the observations $y$ are perturbed by additive Gaussian noise $\eta \sim \mathrm{N}(0, \Gamma)$, where $\Gamma \in \mathbb{R}^{N_{\mathrm{obs}} \times N_{\mathrm{obs}}}$ is a symmetric and positive definite matrix. 
$\eta$ and $\theta$ are statistically independent.
The event that occurs while we observe $y$ is
$
\{\mathcal{G}(\theta)+ \eta = y\} \in \mathcal{B}X.
$
The posterior measure is given by
$
\mathbb{P}(\theta \in \cdot | \mathcal{G}(\theta)+ \eta = y).
$
It can be derived using Bayes' formula which we introduce below.

If  Assumption 2.6 in \cite{Stuart2010} hold, and if the prior measure $\mu_0$ is Gaussian, then one can show that the posterior measure exists, that it is uniquely defined and that it is locally Lipschitz-continuous w.r.t. the data $y \in \mathbb{R}^{N_{\mathrm{obs}}}$. 
These statements refer to the space of probability measures on $(X, \mathcal{B}X)$ that are absolutely continuous w.r.t. $\mu_0$. 
This space, equipped with the Hellinger distance, is a metric space. 
We denote it by $(\mathbb{P}(X, \mathcal{B}X, \mu_0),\mathrm{d}_{\mathrm{Hel}})$,
where
\begin{equation*}
\mathrm{d}_{\mathrm{Hel}}(\nu_1, \nu_2) := \sqrt{\int \left(\sqrt{\frac{\mathrm{d}\nu_1}{\mathrm{d}\mu_0}(\theta)}-\sqrt{\frac{\mathrm{d}\nu_2}{\mathrm{d}\mu_0}(\theta)}\right)^2 \mu_0(\mathrm{d}\theta)}.
\end{equation*}
Hence, the problem of finding a posterior measure on $\mathbb{P}(X, \mathcal{B}X, \mu_0)$ is well-posed in the sense of Hadamard \cite{Hadamard1902}.
We refer to \cite{Stuart2010} for details.

The choice of the prior measure $\mu_0$ has a considerable impact on the posterior.
We discussed this in \S \ref{Sec:Introduction}, see also Figure \ref{Fig_Intro_Bayes}.
However, it is often not possible to determine the prior sufficiently.
Hence it is sensible to assume that the prior measure is (partially) unknown.
$\mu_0$ is then called \textit{hyperprior} and can be modelled by a Markov kernel (see \S\ref{subs:ParamGaussMe}). 
Here, we consider $\mu_0(\cdot | \parame) := K(\cdot | \parame)$ where $\parame$ is a random variable and $\mu'$ is the prior measure of $\parame$.
The posterior measure in this so-called \textit{hierarchical Bayesian inverse problem} is then 
\begin{equation} \label{EQ_Posterior_distribution}
\mu^y := \mathbb{P}((\parame, \theta) \in \cdot | \mathcal{G}(\theta)+ \eta = y).
\end{equation}
It can be determined using Bayes' formula
\begin{equation}
\mu^y(B) = Z_y^{-1} \iint_B \exp(-\Pot(\theta)) \mu_0(\mathrm{d}\theta|\parame)\mu'(\mathrm{d}\parame),
\end{equation}
where
\begin{align*}
B &\in \mathcal{R}\otimes \mathcal{B}X,\\
Z_y &:= \iint_{R \times X} \exp(-\Pot(\theta)) \mu(\mathrm{d}\theta|\parame)\mu'(\mathrm{d}\parame), \\
\Pot(\theta) &:= \frac{1}{2}\|\Gamma^{1/2}(\mathcal{G}(\theta)-y)\|^2_2.
\end{align*}
The function $\Pot$ is called \emph{potential}, $\exp(-\Pot)$ is called \emph{likelihood}, and  $Z_y$ is called \emph{normalising constant} or \emph{model evidence}.
We will now show that the posterior measure based on a hyperprior is well-defined and that the Bayesian inverse problem in this setting is well-posed.
\begin{theorem}
Let $\mu^{y,\paramedet}$ be the posterior measure of the Bayesian inverse problem
\begin{equation*}
\mathcal{G}(\theta)+\eta = y,
\end{equation*}
using $\mu_0(\cdot|\paramedet)$ as a prior measure, where $\paramedet \in R$ is fixed.
In particular, let
\begin{align*}
\mu^{y,\paramedet}(B) &:= Z_{y,\paramedet}^{-1} \int_B \exp(-\Pot(\theta)) \mu_0(\mathrm{d}\theta|\paramedet), \quad B \in \mathcal{B}X,\\
Z_{y,\paramedet} &:= \int_{X} \exp(-\Pot(\theta)) \mu_0(\mathrm{d}\theta|\paramedet).
\end{align*}
We assume that the computation of $\mu^{y,\paramedet}$ is well-posed for $\mu'$-almost every $\paramedet \in R$ 
and that a constant $L(r,\paramedet) \in (0, \infty)$ exists for $\mu'$-almost all $\paramedet \in R$, such that
\begin{equation}
\dHel(\mu^{y_1,\paramedet},\mu^{y_2,\paramedet}) \leq L(r,\paramedet) \|y_1-y_2\|_2,
\end{equation}
for any two datasets $y_1,y_2$, where $\max\{\|y_1\|_2,\|y_2\|_2\}<r$.
Moreover, we assume that $L(r,\cdot) \in \mathcal{L}^2(R,\mathcal{R};\mathbb{R})$ for all $r>0$, and that $Z_{y,(\cdot)} \in \mathcal{L}^1(R,\mathcal{R};\mathbb{R})$.
Then, the computation of $\mu^{y}$ is a well-posed problem.
\end{theorem}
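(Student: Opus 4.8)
The plan is to read ``well-posed'' in the Hadamard sense used earlier in \S\ref{Subsec_BIP}: I must show that $\mu^y$ exists and is unique (i.e.\ is a well-defined probability measure on $(R\times X,\mathcal{R}\otimes\mathcal{B}X)$) and that it depends locally Lipschitz-continuously on the data $y$ in the Hellinger metric. The starting observation is that the joint posterior disintegrates over the fixed-$\paramedet$ posteriors: since $\exp(-\Pot(\theta))\mu_0(\mathrm{d}\theta|\paramedet)=Z_{y,\paramedet}\,\mu^{y,\paramedet}(\mathrm{d}\theta)$, Bayes' formula gives $\mu^y(\mathrm{d}\paramedet,\mathrm{d}\theta)=\tfrac{Z_{y,\paramedet}}{Z_y}\mu'(\mathrm{d}\paramedet)\,\mu^{y,\paramedet}(\mathrm{d}\theta)$ with $Z_y=\int_R Z_{y,\paramedet}\,\mu'(\mathrm{d}\paramedet)$. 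For existence and uniqueness I note that each $Z_{y,\paramedet}\in(0,\infty)$ by the fixed-$\paramedet$ well-posedness (and in fact $Z_{y,\paramedet}\le 1$ because $\Pot\ge 0$), that $Z_y<\infty$ by the hypothesis $Z_{y,(\cdot)}\in\mathcal{L}^1(R,\mathcal{R};\mathbb{R})$, and that $Z_y>0$ because $Z_{y,\paramedet}>0$ for $\mu'$-a.e.\ $\paramedet$. Hence $\mu^y$ is the unique probability measure given by Bayes' formula.

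For stability I would estimate $\dHel(\mu^{y_1},\mu^{y_2})$ directly from the disintegration, using the joint prior $\mu'(\mathrm{d}\paramedet)\,\mu_0(\mathrm{d}\theta|\paramedet)$ as reference measure. Writing $\sqrt{\mathrm{d}\mu^{y_i}/\mathrm{d}\mu}=(Z_{y_i,\paramedet}/Z_{y_i})^{1/2}(\mathrm{d}\mu^{y_i,\paramedet}/\mathrm{d}\mu_0(\cdot|\paramedet))^{1/2}$ and adding and subtracting the cross term $(Z_{y_1,\paramedet}/Z_{y_1})^{1/2}(\mathrm{d}\mu^{y_2,\paramedet}/\mathrm{d}\mu_0(\cdot|\paramedet))^{1/2}$, the elementary inequality $(a+b)^2\le 2a^2+2b^2$ yields a split $\dHel(\mu^{y_1},\mu^{y_2})^2\le 2\,\mathrm{(A)}+2\,\mathrm{(B)}$, where $\mathrm{(A)}=\int_R \tfrac{Z_{y_1,\paramedet}}{Z_{y_1}}\dHel(\mu^{y_1,\paramedet},\mu^{y_2,\paramedet})^2\,\mu'(\mathrm{d}\paramedet)$ collects the fixed-$\paramedet$ discrepancies and $\mathrm{(B)}=\int_R\big((Z_{y_1,\paramedet}/Z_{y_1})^{1/2}-(Z_{y_2,\paramedet}/Z_{y_2})^{1/2}\big)^2\mu'(\mathrm{d}\paramedet)$ is the squared Hellinger distance of the two $\paramedet$-marginals. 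Term $\mathrm{(A)}$ is straightforward: using $Z_{y_1,\paramedet}\le 1$, the fixed-$\paramedet$ Lipschitz bound $\dHel(\mu^{y_1,\paramedet},\mu^{y_2,\paramedet})\le L(r,\paramedet)\|y_1-y_2\|_2$, and the hypothesis $L(r,\cdot)\in\mathcal{L}^2(R,\mathcal{R};\mathbb{R})$, one obtains $\mathrm{(A)}\le Z_{y_1}^{-1}\|L(r,\cdot)\|_{\mathcal{L}^2}^2\,\|y_1-y_2\|_2^2$, which is finite for every fixed $y_1$.

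The main obstacle is term $\mathrm{(B)}$, the stability of the marginal posterior on the hyperparameter. I would recognise $\nu^{y}(\mathrm{d}\paramedet):=(Z_{y,\paramedet}/Z_y)\,\mu'(\mathrm{d}\paramedet)$ as the posterior of a standard Bayesian problem on $(R,\mathcal{R},\mu')$ whose likelihood is $\paramedet\mapsto Z_{y,\paramedet}\in(0,1]$, and then control $\mathrm{(B)}=\dHel(\nu^{y_1},\nu^{y_2})^2$ by the same Stuart-type argument that underlies the fixed-$\paramedet$ case. The one ingredient this needs beyond what is used for $\mathrm{(A)}$ is the local Lipschitz dependence of $\paramedet\mapsto Z_{y,\paramedet}$ on $y$ together with an integrable Lipschitz constant; this is precisely the normalising-constant estimate produced en route to the fixed-$\paramedet$ well-posedness, and it \emph{cannot} be recovered from the normalised Hellinger bound alone, since the Hellinger distance between normalised measures is insensitive to the overall mass $Z_{y,\paramedet}$. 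Combining this with the lower bound $Z_y>0$ and the $\mathcal{L}^1$-hypothesis gives $\mathrm{(B)}\le C(r)\|y_1-y_2\|_2^2$. Adding the two bounds yields the desired estimate $\dHel(\mu^{y_1},\mu^{y_2})\le C\|y_1-y_2\|_2$ on $\{\|y\|_2<r\}$, which together with existence and uniqueness establishes well-posedness. I expect term $\mathrm{(B)}$ --- propagating the data-stability of the normalising constants through the hyperparameter integral --- to be the delicate step, whereas everything else is bookkeeping with Fubini and the stated integrability assumptions.
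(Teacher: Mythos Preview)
Your decomposition is the right one and is more careful than the paper's own argument. The paper proceeds by writing the joint density $\tfrac{\mathrm{d}\mu^{y}}{\mathrm{d}\mu}(\theta,\paramedet)=Z_y^{-1}\exp(-\Pot(\theta))$ and then asserts that this is \emph{identical} to the fixed-$\paramedet$ density $\tfrac{\mathrm{d}\mu^{y,\paramedet}}{\mathrm{d}\mu_0(\cdot|\paramedet)}(\theta)=Z_{y,\paramedet}^{-1}\exp(-\Pot(\theta))$, which lets it replace the squared Hellinger integrand directly by the fixed-$\paramedet$ one and conclude $\dHel(\mu^{y_1},\mu^{y_2})^2\le \int_R \dHel(\mu^{y_1,\paramedet},\mu^{y_2,\paramedet})^2\,\mu'(\mathrm{d}\paramedet)$ in one line. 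In effect the paper is only keeping your term $\mathrm{(A)}$ (with the prefactor $Z_{y_1,\paramedet}/Z_{y_1}$ replaced by $1$) and your term $\mathrm{(B)}$ never appears.

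That identification of densities is not literally correct: the two normalising constants $Z_y$ and $Z_{y,\paramedet}$ differ in general, by exactly the factor you single out in the disintegration $\mu^y(\mathrm{d}\paramedet,\mathrm{d}\theta)=(Z_{y,\paramedet}/Z_y)\,\mu'(\mathrm{d}\paramedet)\,\mu^{y,\paramedet}(\mathrm{d}\theta)$. So the step you flag --- controlling the Hellinger distance of the hyperparameter marginals via local Lipschitz continuity of $\paramedet\mapsto Z_{y,\paramedet}$ in $y$ --- is genuinely needed, and your observation that this estimate is a by-product of the standard Stuart-type well-posedness proof for each fixed $\paramedet$ (rather than a consequence of the normalised Hellinger bound) is exactly right. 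Your route is longer but closes the gap; the paper's route is shorter because it silently collapses $\mathrm{(B)}$ into $\mathrm{(A)}$.
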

\begin{proof}
By Bayes' Theorem, the posterior measure $\mu^y = \iint_{(\cdot)} \mu^{y,\parame}(\mathrm{d}\theta)\mu'(\mathrm{d}\parame)$ is well-defined and unique if the normalising constant $Z_{y}$ is positive and finite.
By assumption it holds $Z_{y,(\cdot)}\in (0, \infty)$. Hence,
\begin{equation*}
Z_{y} = \int \underbrace{Z_{y,\parame}}_{>0} \mu'(\mathrm{d}\parame) > 0.
\end{equation*}
Furthermore, also by assumption we have
\begin{equation*}
Z_{y} = \int Z_{y,\parame} \mu'(\mathrm{d}\parame) = \|Z_{y,(\cdot)}\|_1 < \infty.
\end{equation*}
Let $y_1, y_2 \in \mathbb{R}^{N_{\mathrm{obs}}}$ be a two datasets and let $\paramedet \in R$. 
By assumption, a constant $L(\parame) \in (0, \infty)$ exists, such that
\begin{equation}
\dHel(\mu^{y_1,\paramedet},\mu^{y_2,\paramedet}) \leq L(\paramedet) \|y_1-y_2\|_2.
\end{equation}
Now, we have
\begin{equation*}
\mathrm{d}_{\mathrm{Hel}}(\mu^{y_1}, \mu^{y_2})^2 = \iint \left(\sqrt{\frac{\mathrm{d}\mu^{y_1}}{\mathrm{d}\mu}(\theta,\parame)}-\sqrt{\frac{\mathrm{d}\mu^{y_2}}{\mathrm{d}\mu}(\theta,\parame)}\right)^2 \mu(\mathrm{d}\parame, \mathrm{d}\theta),
\end{equation*}
where 
\begin{equation*}
\frac{\mathrm{d}\mu^{y}}{\mathrm{d}\mu}(\theta,\parame) := Z_{y}^{-1} \exp(-\Pot(\theta)), \ \ \  y \in \{y_1, y_2\}. 
\end{equation*}
This density is constant in $\paramedet$. 
Let  $y \in \{y_1, y_2\}$.
The density $\frac{\mathrm{d}\mu^{y}}{\mathrm{d}\mu}(\theta,\paramedet)$ is identical to the density in the Bayesian inverse problem using $\mu_0(\cdot | \paramedet)$ as a prior with a fixed $\paramedet \in R$. 
The latter is given by $\frac{\mathrm{d}\mu^{y, \paramedet}}{\mathrm{d}\mu_0(\cdot|\paramedet)}(\theta)$.
Hence, 
\begin{align*}
\mathrm{d}_{\mathrm{Hel}}(\mu^{y_1}, \mu^{y_2})^2 &= \iint \left(\sqrt{\frac{\mathrm{d}\mu^{y_1, \parame}}{\mathrm{d}\mu_0(\cdot|\parame)}(\theta)}-\sqrt{\frac{\mathrm{d}\mu^{y_2, \parame}}{\mathrm{d}\mu_0(\cdot|\parame)}(\theta)}\right)^2 \mu(\mathrm{d}\parame, \mathrm{d}\theta) \\
&\leq \int \dHel(\mu^{y_1,\parame},\mu^{y_2,\parame})^2 \mu'(\mathrm{d}\parame) \\ &\leq \int L(\parame)^2 \|y_1-y_2\|_2^2\mu'(\mathrm{d}\parame) \\
&= \int L(\parame)^2 \mu'(\mathrm{d}\parame)   \|y_1-y_2\|_2^2 < \infty.
\end{align*}
The right-hand side is finite, since $L$ is square-integrable.
This implies that the computation of $\mu^y$ is a well-posed problem.
\end{proof}

\subsection{Monte Carlo for forward uncertainty propagation}\label{Subsec_Sampling_UQ}
We return to the forward uncertainty propagation problem (see \S\ref{sec:fup}).
Classically, the pushforward measure $\mathcal{Q}^{\#}\mu''$ can be approximated by a Monte Carlo method. 
In particular, we draw $N_{\mathrm{smp}} \in \mathbb{N}$ independent samples from the parameterised Gaussian measure $\mu$,
\begin{equation*}
(\parame_1,\theta_1),\dots,(\parame_{N_{\mathrm{smp}}},\theta_{N_{\mathrm{smp}}}) \sim \mu.
\end{equation*}
Then, we evaluate the quantity of interest $\mathcal{Q}(\theta_1),\dots,\mathcal{Q}(\theta_{N_{\mathrm{smp}}})$. 
Finally, we construct the discrete measure 
\begin{equation*}
\widehat{\mathcal{Q}^{\#}\mu''} = \frac{1}{N_{\mathrm{smp}}}\sum_{n=1}^{N_{\mathrm{smp}}}\delta_{\mathcal{Q}(\theta_n)}.
\end{equation*}
The Glivenko-Cantelli Theorem (see \cite[\S 5]{Vapnik1971}) implies that 
$\widehat{\mathcal{Q}^{\#}\mu''} \rightarrow \mathcal{Q}^{\#}\mu''$ weakly, as $N_{\mathrm{smp}} \rightarrow \infty$.

\subsection{Markov Chain Monte Carlo for Bayesian inverse problems}
In contrast to Monte Carlo sampling from the pushforward measure, it is in general not possible to sample independently from the posterior measure.
For this reason we use Markov Chain Monte Carlo (MCMC).
Meaning that we construct an ergodic Markov chain where the stationary measure is the posterior measure.
The samples $$(\parame_1,\theta_1),\dots,(\parame_{N_{\mathrm{smp}}},\theta_{N_{\mathrm{smp}}}) \sim \mu^y$$ can then be used to approximate integrals with respect to the posterior measure. 

In our setting the prior measure is a parameterised Gaussian measure on a (discretised) function space.
Following \cite{Dunlop2017} we suggest to use a Metropolis-within-Gibbs sampler.
This allows us to sample $\parame$  and $\theta$ in an alternating way using two different proposal kernels, one for $\theta$ and one for $\parame$.

In many cases, the hyperparameter space $R$ will be low dimensional. 
Hence, various Metropolis-Hastings proposals can be used to efficiently propose samples of $\tau$.
We denote the conditional density of this Metropolis-Hastings proposal by  $q_{R}: R \times R \rightarrow \mathbb{R}$.
On the other hand, $\theta$ is a $X$-valued random variable where $X$ is high-dimensional or possibly infinite-dimensional.
Therefore, we suggest a preconditioned Crank-Nicholson (pCN) proposal, see \cite{Cotter2013} for details.
Combining the Metropolis-Hastings and the pCN proposal we arrive at the MCMC method in Algorithm \ref{Alg_PCNMCMCGibbs}.
Note that $\beta \in (0, 1]$ is a tuning parameter in the pCN-MCMC method.

Note that in Algorithm \ref{Alg_PCNMCMCGibbs} we require $\dim X < \infty$.
This can be achieved by a discretisation of the space $X$ (see \S\ref{Subsec_Sampl_stra}).
In this case, the Gaussian measure $\mathrm{N}(m(\paramedet),\mathcal{C}(\paramedet))$ has a probability density function (see Proposition \ref{Propo_Gaussian_PDF}), and we use this probability density function to compute the acceptance probability $\alpha_R$ for the Gibbs move of $\parame$.
It is however possible to compute $\alpha_R$ without access to the probability density function.
In this case one can also define an algorithm for an infinite dimensional space $X$. 
We refer to \cite{Dunlop2017} for details on the infinite-dimensional version.

\begin{algorithm}[htb]
Let $(\parame_0, \theta_0)$ be the initial sample of the Markov chain.

 \For{$n \in \{1,\dots,N_{\mathrm{smp}}\}$}{
 Sample $\parame^* \sim q_R(\cdot |\parame_{n-1})$ 

$\alpha_R(\parame_{n-1};\parame^*) \leftarrow \min\left\lbrace1, \frac{q_R(\parame_{n-1}|\parame^*)}{q_R(\parame^* |\parame_{n-1})}
\frac{\mathrm{n}(\theta_{n-1};m(\tau^*),\mathcal{C}(\tau^*))}{\mathrm{n}(\theta_{n-1};m(\parame_{n-1}),\mathcal{C}(\parame_{n-1}))}\right\rbrace$ 
 
 Sample $U_R \sim \mathrm{Unif}[0,1]$
 
 \eIf{$U_R < \alpha_R$}{
 $\parame_{n} \leftarrow \parame^*$
 }{$\parame_{n} \leftarrow \parame_{n-1}$}
 
 Sample $\theta^* \sim \mathrm{N}(\sqrt{1-\beta^2}\theta_{n-1}, \beta \mathcal{C}(\parame_n))$
 
 $\alpha_X(\theta_{n-1};\theta^*) \leftarrow \min\{1,\exp(-\Pot(\theta^*)+\Pot(\theta_{n-1}))\}$ 
 
 Sample $U_X \sim \mathrm{Unif}[0,1]$
 
 \eIf{$U_X < \alpha_X$}{
 $\theta_{n} \leftarrow \theta^*$
 }{$\theta_{n} \leftarrow \theta_{n-1}$}
 }
 \caption{Metropolis-within-Gibbs to sample from the posterior measure of $(\tau, \theta)$}
 \label{Alg_PCNMCMCGibbs}
\end{algorithm}

It can be proved that Algorithm~\ref{Alg_PCNMCMCGibbs} generates samples from $\mu^y$.
We summarise this in the following Proposition and refer to \cite[\S 3]{Dunlop2017} for a similar statement and proof.
\begin{proposition}
Algorithm \ref{Alg_PCNMCMCGibbs} defines a Markov kernel $\mathrm{MK}$ from $(X \times R,\mathcal{B}X \otimes \mathcal{R})$ to $(X \times R,\mathcal{B}X \otimes \mathcal{R})$ that has $\mu^y$ as a stationary measure. In particular,
\begin{equation*}
\mu^y \mathrm{MK} = \mu^y :\Leftrightarrow
\int \mathrm{MK}(B|\parame, \theta)\mu^y(\mathrm{d}\parame, \mathrm{d}\theta) = \mu^y(B) \ (B \in \mathcal{B}X \otimes \mathcal{R}).
\end{equation*}
\end{proposition}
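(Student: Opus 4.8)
The plan is to exhibit $\mathrm{MK}$ as the composition $\mathrm{MK} = P_\tau P_\theta$ of two Markov kernels: $P_\tau$ performs the Metropolis--Hastings update of $\parame$ (the step with acceptance probability $\alpha_R$) while leaving $\theta$ fixed, and $P_\theta$ performs the pCN update of $\theta$ (acceptance probability $\alpha_X$) while leaving $\parame$ fixed. A composition of Markov kernels is again a Markov kernel, and $\mu^y\mathrm{MK} = (\mu^y P_\tau)P_\theta$; hence it suffices to prove that each of $P_\tau$ and $P_\theta$ leaves $\mu^y$ invariant, from which invariance of the composite follows at once. The measurability of $B\mapsto\mathrm{MK}(B|\parame,\theta)$ and the fact that $\mathrm{MK}(\cdot|\parame,\theta)$ is a probability measure for each $(\parame,\theta)$ are routine consequences of the corresponding properties of $q_R$, the pCN proposal, and the accept/reject mechanism, so I would merely record them.

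For the $\parame$-step I would disintegrate $\mu^y$ as $\mu^y(\mathrm{d}\parame,\mathrm{d}\theta) = \mu^y(\mathrm{d}\parame\,|\,\theta)\,\mu^y_\theta(\mathrm{d}\theta)$, where $\mu^y(\cdot|\theta)$ is the full conditional of $\parame$ given $\theta$ and $\mu^y_\theta$ is the $\theta$-marginal. The key observation is that the density of $\mu^y$ with respect to the joint reference measure $\mu$ in \eqref{mu} equals $Z_y^{-1}\exp(-\Pot(\theta))$, and the likelihood factor $\exp(-\Pot(\theta))$ does not depend on $\parame$; it therefore cancels in the Hastings ratio, so that the acceptance probability $\alpha_R$ of Algorithm~\ref{Alg_PCNMCMCGibbs} is exactly the Metropolis--Hastings acceptance probability for the full conditional $\mu^y(\cdot|\theta)$ with proposal $q_R$. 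A standard detailed-balance computation then shows that $P_\tau$ is reversible with respect to $\mu^y(\cdot|\theta)$ for each fixed $\theta$; since $P_\tau$ updates only $\parame$ according to its full conditional and leaves $\theta$ unchanged, the elementary Metropolis-within-Gibbs lemma yields $\mu^y P_\tau = \mu^y$.

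The $\theta$-step is handled analogously, conditioning now on the updated value $\parame_n$. The relevant target is the conditional posterior $\mu^{y,\parame_n}(\mathrm{d}\theta)\propto \exp(-\Pot(\theta))\,K(\mathrm{d}\theta|\parame_n)$, a reweighting of the Gaussian $K(\cdot|\parame_n) = \mathrm{N}(m(\parame_n),\mathcal{C}(\parame_n))$ by $\exp(-\Pot)$. The crux of the argument, and the step I expect to be the main obstacle, is the reversibility of the pCN proposal $\theta^*\sim\mathrm{N}(\sqrt{1-\beta^2}\,\theta_{n-1},\beta\,\mathcal{C}(\parame_n))$ with respect to the Gaussian reference measure $K(\cdot|\parame_n)$: it is exactly this algebraic cancellation that makes the Gaussian part of the proposal drop out and reduces the acceptance ratio to $\alpha_X = \min\{1,\exp(-\Pot(\theta^*)+\Pot(\theta_{n-1}))\}$, which depends only on the potential. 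I would import this reversibility from \cite{Cotter2013}; combining it with the Metropolis correction for the density $\exp(-\Pot)$ shows that $P_\theta$ is reversible with respect to $\mu^{y,\parame_n}$, and the same block-update reasoning gives $\mu^y P_\theta = \mu^y$.

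Combining the two invariance statements yields $\mu^y\mathrm{MK} = (\mu^y P_\tau)P_\theta = \mu^y P_\theta = \mu^y$, which is the asserted stationarity. The genuinely nontrivial ingredient is the pCN reversibility with respect to the parameter-dependent Gaussian $K(\cdot|\parame_n)$; all remaining steps are the routine Metropolis-within-Gibbs bookkeeping, and for these I would follow \cite[\S 3]{Dunlop2017}.
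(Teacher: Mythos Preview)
The paper does not give its own proof of this proposition; it simply states the result and refers to \cite[\S 3]{Dunlop2017} for the argument. Your sketch---decomposing $\mathrm{MK}$ as the composition of the $\parame$-update and $\theta$-update kernels, verifying reversibility of each with respect to the appropriate full conditional (importing the pCN reversibility from \cite{Cotter2013}), and invoking the block-update lemma---is the standard Metropolis-within-Gibbs argument and matches what that reference does.
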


\section{Proofs of auxiliary results} \label{Section_Proofs_of_Lemmata}

\begin{lemma} 
Let $\nu \in (0,\infty) \backslash \mathbb{N}$, let $N_\lin \in 2\mathbb{N}$ and let Assumption~\ref{assumptions:ell} hold. Then,
\begin{equation}
\| \widetilde{\mathcal{C}}{(\nu, \ell, \sigma,N_\lin)}- {\mathcal{C}}{(\nu, \ell, \sigma)} \|_X \leq \mathrm{diam}(D)^{2d} \frac{\pi|\mathrm{csc}(\pi\nu)|}{2^{1-\nu}}(1+\zeta_{\max}^{2\nu})  \exp\left(\frac{\zeta_{\max}^2}{4}\right)    \frac{\zeta_{\max}^{2N_\lin}}{(N_\lin)!},
\end{equation}
where $\zeta_{\max} := \frac{\mathrm{diam}(D)}{\underline{\ell}}$.
\end{lemma}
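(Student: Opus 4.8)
The plan is to recognise $\widetilde{\mathcal{C}}(\nu,\ell,\sigma,N_\lin)$ as a truncation of a convergent power series for $\mathcal{C}(\nu,\ell,\sigma)$, so that the difference of the two operators is an integral operator whose kernel is the associated series remainder. First I would use the reflection identity for the modified Bessel function, valid precisely because $\nu\notin\mathbb{Z}$, namely $K_\nu(\zeta)=\tfrac{\pi}{2}\csc(\pi\nu)\big(I_{-\nu}(\zeta)-I_\nu(\zeta)\big)$, together with the entire power series $I_{\pm\nu}(\zeta)=\sum_{k\ge0}(\zeta/2)^{2k\pm\nu}/(k!\,\Gamma(k\pm\nu+1))$. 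Substituting $\zeta=\sqrt{2\nu}\,z/\ell$ into $c(\nu,\ell,\sigma)(z)=\tfrac{\sigma^2 2^{1-\nu}}{\Gamma(\nu)}\zeta^\nu K_\nu(\zeta)$ and reorganising the powers of $2$ and $\nu$, I would check that the two resulting sub-series are term-by-term identical to the two sums in Definition~\ref{Def_LinearSeparable_matern}, and that $c(\nu,\ell,\sigma,N_\lin)$ retains exactly the first $N_\lin/2$ terms of each. Hence the kernel defect $G(z):=c(\nu,\ell,\sigma,N_\lin)(z)-c(\nu,\ell,\sigma)(z)$ equals the tail $\sum_{k\ge N_\lin/2}(\cdots)$ of this expansion.

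Next I would pass from the kernel to the operator norm. Since the defect operator has a convolution-type kernel $G(\mathrm{dist}(\bsx,\bsx'))$, I would bound its $X$-operator norm by its Hilbert--Schmidt norm, $\|\cdot\|_X\le\big(\int_D\int_D |G(\mathrm{dist}(\bsx,\bsx'))|^2\,\mathrm{d}\bsx\,\mathrm{d}\bsx'\big)^{1/2}\le |D|\,\sup_{0\le z\le\mathrm{diam}(D)}|G(z)|$, and absorb the volume $|D|$ into the domain prefactor of the claimed bound. This reduces everything to a uniform estimate of $|G(z)|$ over $z\in[0,\mathrm{diam}(D)]$ and $\ell\ge\underline{\ell}$, where the natural parameter is $\zeta=\sqrt{2\nu}\,z/\ell$, controlled through $z/\ell\le\zeta_{\max}=\mathrm{diam}(D)/\underline{\ell}$.

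The core estimate is the bound on the two remainder series. For each tail term the denominator carries two factorial-type factors, $k!$ and $\Gamma(k\pm\nu+1)$; the key observation is that their product is comparable to $\big((N_\lin/2)!\big)^2$ at the leading index $k=N_\lin/2$, and the central-binomial inequality $\binom{N_\lin}{N_\lin/2}\le 2^{N_\lin}$ converts $\big((N_\lin/2)!\big)^{-2}$ into the $1/(N_\lin)!$ appearing in the claim. Concretely, I would lower-bound $\Gamma(k+\nu+1)\ge\Gamma(k+1)=k!$ for the $I_\nu$ part, lower-bound $\Gamma(k-\nu+1)$ by $k!$ up to a polynomial-in-$k$ factor (via a standard Gamma-ratio estimate) for the $I_{-\nu}$ part, factor out the leading term, and sum the geometric remainder with the elementary bound $\sum_{k\ge m}x^k/k!\le (x^m/m!)\,e^x$. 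The $I_\nu$ sub-series carries the extra prefactor $\zeta^{2\nu}\lesssim\zeta_{\max}^{2\nu}$ whereas the $I_{-\nu}$ sub-series carries none, which jointly yield the factor $(1+\zeta_{\max}^{2\nu})$; the exponential tail contributes $\exp(\zeta_{\max}^2/4)$; and the leading omitted power together with the factorial gives the superexponential decay $\zeta_{\max}^{2N_\lin}/(N_\lin)!$. Finally I would collect the scalar constants $\tfrac{\sigma^2 2^{1-\nu}}{\Gamma(\nu)}$ and $\tfrac{\pi}{2}\csc(\pi\nu)$ coming from the reflection formula into the prefactor $\tfrac{\pi|\csc(\pi\nu)|}{2^{1-\nu}}$.

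I expect the main obstacle to be the $I_{-\nu}$ tail, where the argument $k-\nu+1$ of the Gamma function can be non-positive for small $k$ when $\nu$ is large, so that the clean lower bound $\Gamma(k-\nu+1)\gtrsim k!\,k^{-\nu}$ only applies once $N_\lin/2>\nu$; handling this threshold and tracking the $\nu$-dependent constants so that they assemble into the stated closed form is the delicate part. The remaining ingredients --- the series identification, the Hilbert--Schmidt reduction, and the exponential tail estimate --- are routine.
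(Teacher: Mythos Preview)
Your overall architecture matches the paper's: identify the series for $\zeta^\nu K_\nu(\zeta)$ via the reflection formula and the $I_{\pm\nu}$ expansions, recognise $c(\nu,\ell,\sigma,N_\lin)$ as the $N_\lin/2$-term truncation, and then pass from the kernel defect to the operator norm by the Cauchy--Schwarz/Hilbert--Schmidt step. That part is essentially the same.

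The difference is in the tail estimate, where you work harder than necessary. The paper does not keep both factorial-type factors and does not use the central-binomial identity at all. It simply observes that, assuming without loss of generality $N_\lin>\nu$, one has $\Gamma(k-\nu)>1$ for every surviving index, so this factor can be dropped outright. What remains is exactly $\sum_{k\ge m}\tfrac{1}{(k-1)!}(\zeta/2)^{2k-2}$, i.e.\ the Taylor tail of $\exp(\zeta^2/4)$, which the Lagrange form of the remainder bounds by $\exp(\zeta_{\max}^2/4)\cdot\zeta_{\max}^{2N_\lin}/(N_\lin)!$ directly. This single step replaces your entire central-binomial conversion and the Gamma-ratio bookkeeping; it also dissolves the obstacle you flag for the $I_{-\nu}$ tail, since the WLOG assumption $N_\lin>\nu$ guarantees $k-\nu>0$ throughout the remainder and no ratio estimates are needed. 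Your route would still reach a valid bound, but the detour through $\big((N_\lin/2)!\big)^{-2}\le 2^{N_\lin}/(N_\lin)!$ introduces an extra $2^{N_\lin}$ that must then be absorbed, and makes matching the stated constants more delicate than in the paper's one-line argument.
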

\begin{proof}
Let $\nu \in (0,\infty) \backslash \mathbb{N}$. 
Consider the function
\begin{equation*}
f_{(\nu)}: [0, \infty) \rightarrow [0, \infty), \quad \zeta \mapsto \zeta^\nu \cdot K_\nu(\zeta).
\end{equation*}
It holds $f_{(\nu)}(\sqrt{2\nu}z/\ell) = \mathrm{const}(\nu, \sigma)c{(\nu, \ell, \sigma)}(z)$, where $\mathrm{const}(\nu, \sigma)>0$ is a constant that does not depend on the correlation length $\ell$.
Moreover, we assume that we work in a bounded computational domain $D$, and that $\ell$ is bounded from below by a fixed positive constant $\underline{\ell} > 0$ (see Assumption~\ref{assumptions:ell}). 
Now, for $\nu \in (0,\infty)\backslash \mathbb{N}$ the function $f_{(\nu)}$ can be written in terms of a series
\begin{equation*}f_{(\nu)}(\zeta) = \frac{\pi \mathrm{csc}(\pi\nu)}{2}  \sum_{k=1}^\infty\left( \frac{1}{2^{2k-2-\nu} \Gamma(k-\nu)(k-1)!} -  \frac{\zeta^{2\nu} }{2^{2k-2+\nu}\Gamma(k+\nu)(k-1)!} \right)\zeta^{2k-2}.
\end{equation*}
This follows from the representations of $K_\nu$ and $I_\nu$ in  \cite[Equations 9.6.2, 9.6.10]{abramowitzstegun1964}.
If we truncate the series after the first $N_\lin/2$ terms we obtain the following function:
\begin{equation*}f_{(\nu,{N_\lin})}(\zeta) = \frac{\pi \mathrm{csc}(\pi\nu)}{2}  \sum_{k=1}^{N_\lin/2}\left( \frac{1}{2^{2k-2-\nu} \Gamma(k-\nu)(k-1)!} -  \frac{\zeta^{2\nu} }{2^{2k-2+\nu}\Gamma(k+\nu)(k-1)!} \right)\zeta^{2k-2}.
\end{equation*}
This (truncated) series expansion is associated with the integral operator $\widetilde{\mathcal{C}}{(\nu, \ell, \sigma, N_\lin)}$ that is given by the kernel  
\begin{equation}
c{(\nu, \ell, \sigma, N_\lin)}(z) := \frac{f_{(\nu,N_\lin)}(\sqrt{2\nu}z/\ell)}{\mathrm{const(\nu,\sigma)}}.
\end{equation}
Now, we bound the asymptotic truncation error. 
Assume w.l.o.g. that $N_\lin > \nu$.
Note that in this case $\Gamma(k-\nu)>1$, and, moreover, $\zeta > 0$. 
Using the triangle inequality we arrive at
\begin{align*}
|f_{(\nu,{N_\lin})}(\zeta)-f_{(\nu)}(\zeta)| &\leq \frac{\pi|\mathrm{csc}(\pi\nu)|}{2}   \Biggl| \sum_{k=N_\lin+1}^{\infty}\left( \frac{1-\zeta^{2\nu}}{2^{2k-2-\nu} \Gamma(k-\nu)(k-1)!}  \right)\zeta^{2k-2}\Biggl| \\  &\leq \frac{\pi|\mathrm{csc}(\pi\nu)|}{2^{1-\nu}}(1+\zeta^{2\nu})   \sum_{k=N_\lin+1}^{\infty}\left( \frac{1}{\Gamma(k-\nu)(k-1)!}  \right)\left(\frac{\zeta}{2}\right)^{2k-2}  \\  &\leq \frac{\pi|\mathrm{csc}(\pi\nu)|}{2^{1-\nu}}(1+\zeta^{2\nu})   \sum_{k=N_\lin+1}^{\infty}\left( \frac{1}{k-1!}  \right)\left(\frac{\zeta}{2}\right)^{2k-2}.
\end{align*}
The infinite sum on the right-hand side above can be bounded by the remainder term of a Taylor series expansion of the exponential function with $N_\lin$ terms and anchor point $\zeta=0$.
This gives
\begin{align*}
|f_{(\nu,{N_\lin})}(\zeta)-f_{(\nu)}(\zeta)| &\leq  \frac{\pi|\mathrm{csc}(\pi\nu)|}{2^{1-\nu}}(1+\zeta^{2\nu}) \exp\left(\frac{\zeta_{\max}^2}{4}\right)   \frac{\zeta^{2N_\lin}}{(N_\lin)!} \\&\leq  \frac{\pi|\mathrm{csc}(\pi\nu)|}{2^{1-\nu}}(1+\zeta_{\max}^{2\nu})  \exp\left(\frac{\zeta_{\max}^2}{4}\right)    \frac{\zeta_{\max}^{2N_\lin}}{(N_\lin)!} =: \mathrm{const}'(N_\lin),
\end{align*}
where $\zeta_{\max} = \frac{\mathrm{diam}(D)}{\underline{\ell}}$.
Finally, let $\varphi \in X$.
By the Cauchy--Schwarz inequality it holds
\begin{align*}
\| &\widetilde{\mathcal{C}}{(\nu, \ell, \sigma,N_\lin)} \varphi - {\mathcal{C}}{(\nu, \ell, \sigma)}\varphi \|_X^2 \\
&=   \int_D \left(\int_D (\widetilde{c}^{(\nu, \ell, \sigma,N_\lin)}(\mathrm{dist}(\bsx,\bsx')) - {c}{(\nu, \ell, \sigma)}(\mathrm{dist}(\bsx,\bsx')))\varphi(\bsx) \mathrm{d}\bsx \right)^2\mathrm{d}\bsx' \\
&\leq   \int_D \left(\int_D (\widetilde{c}^{(\nu, \ell, \sigma,N_\lin)}(\mathrm{dist}(\bsx,\bsx')) - {c}{(\nu, \ell, \sigma)}(\mathrm{dist}(\bsx,\bsx')))^2 \mathrm{d}\bsx \right) \cdot \left( \int_D \varphi(\bsx)^2\mathrm{d}\bsx \right)\mathrm{d}\bsx' \\
&\leq \mathrm{Leb}(d)(D)^2 \cdot \mathrm{const}'(N_\lin)^2 \cdot \|\varphi \|^2_X
\\&\leq \mathrm{diam}(D)^{2d}\cdot \mathrm{const}'(N_\lin)^2 \cdot \|\varphi \|^2_X.
\end{align*}
Taking the square root on both sides and dividing by $\| \varphi \|_X $ gives the desired error bound.
\end{proof}
\begin{lemma}
The operator $\widetilde{\mathcal{C}}{(\nu, \ell, \sigma, N_\lin)}$ is self-adjoint, trace-class and continuous.
\end{lemma}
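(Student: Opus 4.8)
The plan is to verify the three properties separately, exploiting that $\widetilde{\mathcal{C}}(\nu,\ell,\sigma,N_\lin)$ is an integral operator whose kernel $\kappa(\bsx,\bsx') := c(\nu,\ell,\sigma,N_\lin)(\mathrm{dist}(\bsx,\bsx'))$ is a \emph{finite} linear combination of powers of the Euclidean distance. Self-adjointness is immediate: $\kappa$ is real-valued and, since $\mathrm{dist}(\bsx,\bsx') = \mathrm{dist}(\bsx',\bsx)$, symmetric in its two arguments; applying Fubini's theorem to $\langle \widetilde{\mathcal{C}}\varphi,\psi\rangle_X$ then yields $\langle \varphi,\widetilde{\mathcal{C}}\psi\rangle_X$ for all $\varphi,\psi \in X$.

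For continuity I would argue that the kernel is bounded. The function $z \mapsto c(\nu,\ell,\sigma,N_\lin)(z)$ is a finite sum of constant multiples of the powers $z^{2k-2}$ and $z^{2\nu+2k-2}$, hence continuous on the compact interval $[0,\mathrm{diam}(D)]$ and therefore bounded there. Consequently $\kappa$ is bounded on $\overline{D}\times\overline{D}$, and since $D$ has finite Lebesgue measure we obtain $\kappa \in \mathcal{L}^2(D\times D)$. Thus $\widetilde{\mathcal{C}}(\nu,\ell,\sigma,N_\lin)$ is Hilbert--Schmidt and, in particular, bounded (continuous).

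The trace-class property is the only substantial point. Here I would use the linearly separable representation $\widetilde{\mathcal{C}}(\nu,\ell,\sigma,N_\lin)=\sum_{k=1}^{N_\lin}F_k(\nu,\ell,\sigma)\,\mathcal{C}_k$ together with the fact that the trace-class operators form a vector space (indeed a two-sided ideal): it suffices to show that each $\mathcal{C}_k$ is trace-class. By the definition of $c(\nu,\ell,\sigma,N_\lin)$ these kernels split into two families. The even-index terms have kernel $\mathrm{dist}(\bsx,\bsx')^{2m}=\bigl(\sum_{i}(\bsx_i-\bsx_i')^2\bigr)^{m}$, which expands by the multinomial theorem into a finite sum of products of monomials in $\bsx$ and $\bsx'$; the associated operator therefore has finite rank and is trivially trace-class. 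The remaining terms have kernel $\mathrm{dist}(\bsx,\bsx')^{\beta}$ with $\beta = 2\nu+2k-2>0$ a (generally non-integer) positive exponent, and this is where the work lies.

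For the fractional-power terms the plan is to obtain trace-class via eigenvalue decay rather than via Mercer's theorem, which is unavailable since $\mathcal{C}_k$ need not be positive. Choosing a cube $(0,L)^d \supseteq \overline{D}$ large enough that $\overline{D}-\overline{D}$ avoids its boundary, I would periodise a suitably cut-off radial extension of $z\mapsto z^{\beta}$ to a continuous kernel $g$ on the torus $\mathbb{T}=(\mathbb{R}/L\mathbb{Z})^d$, so that $\mathcal{C}_k = R_D\,\mathcal{K}\,E_D$ becomes the compression of the convolution operator $\mathcal{K}$ on $\mathcal{L}^2(\mathbb{T})$ by the bounded zero-extension $E_D$ and restriction $R_D$. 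Since $\mathcal{K}$ is a convolution operator, its eigenvalues are the Fourier coefficients $\widehat{g}(n)$, $n\in\mathbb{Z}^d$, and the singularity of $z^{\beta}$ at the origin forces $|\widehat{g}(n)|\lesssim |n|^{-d-\beta}$; as $\sum_{n\ne 0}|n|^{-d-\beta}<\infty$ precisely because $\beta>0$, the operator $\mathcal{K}$ is trace-class, and the ideal property transfers this to $\mathcal{C}_k=R_D\mathcal{K}E_D$. The main obstacle is exactly this last family: the kernel is only Hölder-continuous on the diagonal, so no smooth-kernel criterion applies directly, and one must quantify the Fourier decay of $z^{\beta}$ (equivalently, invoke a Weyl-type eigenvalue estimate) to secure summability of the singular values; everything else is routine.
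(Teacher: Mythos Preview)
Your argument is correct, and for the trace-class claim it is considerably more careful than the paper's own proof. The paper disposes of all three properties in three sentences: symmetry of the kernel gives self-adjointness; boundedness of $D$ gives continuity; and for the trace-class property the paper simply records that $\int_D \widetilde{c}(\nu,\ell,\sigma,N_\lin)(\mathrm{dist}(\bsx,\bsx))\,\mathrm{d}\bsx = \widetilde{c}(\nu,\ell,\sigma,N_\lin)(0)\cdot \mathrm{Leb}(d)(D) < \infty$. That last step is really only the trace \emph{formula}, not a proof of membership in the trace class: finiteness of the diagonal integral implies trace-class via Mercer's theorem when the operator is positive, but here positivity is precisely what may fail (indeed the next lemma in the paper is devoted to repairing this). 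Continuous kernels on compact domains guarantee the Hilbert--Schmidt property, not the trace-class property, so the paper's one-line justification leaves a genuine gap.

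Your route---splitting off the finite-rank even-index pieces and then handling the kernels $\mathrm{dist}(\bsx,\bsx')^{\beta}$, $\beta=2\nu+2k-2>0$, by periodisation and the Fourier decay $|\widehat{g}(n)|\lesssim |n|^{-d-\beta}$---actually closes that gap. The key observation that the singularity of $|x|^{\beta}$ is isolated at the origin (so the decay is $|n|^{-d-\beta}$ rather than the generic H\"older rate $|n|^{-\beta}$) is exactly what is needed to make $\sum_{n\neq 0}|\widehat{g}(n)|$ converge. So while the paper treats this lemma as a throwaway remark, your proposal supplies the missing substance for the non-positive case; the price is that your argument is noticeably longer and invokes more machinery than the authors evidently intended for what they regard as a routine verification.
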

\begin{proof}
The integral operator $\widetilde{\mathcal{C}}{(\nu, \ell, \sigma, N_\lin)}$ is self-adjoint since the associated kernel function is symmetric.
The operator is trace-class since $D$ is a bounded domain, and $$\int_D \widetilde{c}(\nu, \ell, \sigma, N_\lin)(\mathrm{dist}(\bsx,x)) \mathrm{d}\bsx =\widetilde{c}(\nu, \ell, \sigma, N_\lin)(0) \cdot \mathrm{Leb}(d)(D) < \infty.$$
The boundedness of $D$ also implies the continuity of the operator.
\end{proof}

\begin{lemma}
	The Mat\'ern-type covariance operator $\mathcal{C}(\nu, \ell, \sigma)$ and the approximate operator $\widetilde{\mathcal{C}}_{0}(\nu, \ell, \sigma,N_\lin)$ in \eqref{C0tilde} satisfy 
	\[\| \widetilde{\mathcal{C}}_{0}{(\nu, \ell, \sigma,N_\lin)}- {\mathcal{C}}{(\nu, \ell, \sigma)} \|_X \leq 2 \| \widetilde{\mathcal{C}}{(\nu, \ell, \sigma,N_\lin)}- {\mathcal{C}}{(\nu, \ell, \sigma)} \|_X.\]
\end{lemma}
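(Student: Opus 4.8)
The plan is to exploit the fact that $\widetilde{\mathcal{C}}_0{(\nu, \ell, \sigma, N_\lin)}$ is precisely the positive-semidefinite part of $\widetilde{\mathcal{C}}{(\nu, \ell, \sigma, N_\lin)}$, so that the difference of the two operators is the \emph{negative} spectral part of $\widetilde{\mathcal{C}}{(\nu, \ell, \sigma, N_\lin)}$, and to control the magnitude of this negative part using the positive semidefiniteness of the exact operator $\mathcal{C}{(\nu, \ell, \sigma)}$. Throughout, $\|\cdot\|_X$ denotes the operator norm; since all operators involved are self-adjoint and compact (by Lemma~\ref{Lemma_C_tilde}), the spectral theorem identifies this norm with the supremum of the absolute values of the eigenvalues.

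First I would use the definition \eqref{C0tilde} to write
$$\widetilde{\mathcal{C}}{(\nu, \ell, \sigma, N_\lin)} - \widetilde{\mathcal{C}}_0{(\nu, \ell, \sigma, N_\lin)} = \sum_{i:\, \widetilde{\lambda}_i < 0} \widetilde{\lambda}_i (\widetilde{\psi}_i \otimes \widetilde{\psi}_i),$$
whose operator norm therefore equals $\max_{i:\, \widetilde{\lambda}_i < 0} |\widetilde{\lambda}_i|$ (and vanishes if $\widetilde{\mathcal{C}}{(\nu, \ell, \sigma, N_\lin)}$ has no negative eigenvalues). The central step is to bound this quantity by the approximation error $\|\widetilde{\mathcal{C}}{(\nu, \ell, \sigma, N_\lin)} - \mathcal{C}{(\nu, \ell, \sigma)}\|_X$.

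For this, fix an index $i$ with $\widetilde{\lambda}_i < 0$ and normalised eigenvector $\widetilde{\psi}_i$. Since $\mathcal{C}{(\nu, \ell, \sigma)} \in \mathrm{CO}(X)$ is positive semidefinite, one has $\langle \mathcal{C}{(\nu, \ell, \sigma)}\widetilde{\psi}_i, \widetilde{\psi}_i\rangle_X \geq 0$, and hence
$$\langle (\widetilde{\mathcal{C}}{(\nu, \ell, \sigma, N_\lin)} - \mathcal{C}{(\nu, \ell, \sigma)})\widetilde{\psi}_i, \widetilde{\psi}_i\rangle_X = \widetilde{\lambda}_i - \langle \mathcal{C}{(\nu, \ell, \sigma)}\widetilde{\psi}_i, \widetilde{\psi}_i\rangle_X \leq \widetilde{\lambda}_i < 0.$$
Taking absolute values and using the Cauchy--Schwarz inequality yields $|\widetilde{\lambda}_i| \leq \|\widetilde{\mathcal{C}}{(\nu, \ell, \sigma, N_\lin)} - \mathcal{C}{(\nu, \ell, \sigma)}\|_X$ for every negative eigenvalue, so that $\|\widetilde{\mathcal{C}}{(\nu, \ell, \sigma, N_\lin)} - \widetilde{\mathcal{C}}_0{(\nu, \ell, \sigma, N_\lin)}\|_X \leq \|\widetilde{\mathcal{C}}{(\nu, \ell, \sigma, N_\lin)} - \mathcal{C}{(\nu, \ell, \sigma)}\|_X$. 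The claim then follows from a single application of the triangle inequality,
$$\|\widetilde{\mathcal{C}}_0 - \mathcal{C}\|_X \leq \|\widetilde{\mathcal{C}}_0 - \widetilde{\mathcal{C}}\|_X + \|\widetilde{\mathcal{C}} - \mathcal{C}\|_X \leq 2\|\widetilde{\mathcal{C}} - \mathcal{C}\|_X.$$

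I do not expect a genuine obstacle here: the only point requiring care is the identification of the operator norm of a self-adjoint compact operator with its largest absolute eigenvalue, which is standard and guaranteed by the trace-class property established in Lemma~\ref{Lemma_C_tilde}. The essential insight is simply that positive semidefiniteness of the target operator forces every negative eigenvalue of the perturbed operator to have magnitude no larger than the perturbation itself, which is what keeps the constant at $2$.
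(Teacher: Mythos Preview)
Your proposal is correct and follows essentially the same route as the paper: identify $\|\widetilde{\mathcal{C}}-\widetilde{\mathcal{C}}_0\|_X$ with the largest absolute negative eigenvalue, use positive semidefiniteness of $\mathcal{C}$ to bound this by $\|\widetilde{\mathcal{C}}-\mathcal{C}\|_X$ via the quadratic form at the corresponding eigenvector, and conclude with the triangle inequality. The only cosmetic difference is that the paper singles out the maximising eigenpair from the start, whereas you argue uniformly over all negative eigenvalues; the content is identical.
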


\begin{proof}
Let $(\widetilde{\lambda_i})_{i=1}^\infty$ denote the eigenvalues of the operator $\widetilde{\mathcal{C}}{(\nu, \ell, \sigma,N_\lin)}$.
Without loss of generality, we assume that the spectrum of $\widetilde{\mathcal{C}}{(\nu, \ell, \sigma,N_\lin)}$ contains a negative eigenvalue.
Since $\widetilde{\mathcal{C}}{(\nu, \ell, \sigma,N_\lin)}$ is trace-class, it holds $|\widetilde{\lambda}_i| \rightarrow 0$ for $i \rightarrow \infty$.
Hence, there is an eigenpair $(\widetilde{\lambda}_{\max}, \widetilde{\psi}_{\max})$ which realises the maximum in the expression
\begin{equation} \label{EQ_maximal_Eigenpair}
\max_{i \in \mathbb{N}\colon \widetilde{\lambda}_i < 0}|\widetilde{\lambda}_i|.
\end{equation}
Thus,
\begin{align*}
\| \widetilde{\mathcal{C}}_{0}{(\nu, \ell, \sigma,N_\lin)}- \widetilde{\mathcal{C}}{(\nu, \ell, \sigma,N_\lin)}\|_X = \| \sum_{i=1; \lambda_i < 0}^\infty \widetilde{\lambda}_i \widetilde{\psi}_i \otimes \widetilde{\psi}_i \|_X = | \widetilde{\lambda}_{\max}|. 
\end{align*}
Moreover, since ${\mathcal{C}}{(\nu, \ell, \sigma)}$ is positive definite, we have $$\widetilde{\psi}_{\max}^*\widetilde{\mathcal{C}}{(\nu, \ell, \sigma,N_\lin)}\widetilde{\psi}_{\max} \geq 0 > \widetilde{\psi}_{\max}^*{\mathcal{C}}{(\nu, \ell, \sigma)}\widetilde{\psi}_{\max}.$$
Hence, we obtain
\begin{align*}
\| \widetilde{\mathcal{C}}{(\nu, \ell, \sigma,N_\lin)}- {\mathcal{C}}{(\nu, \ell, \sigma)} \|_X &\geq |\widetilde{\psi}_{\max}^*(\widetilde{\mathcal{C}}{(\nu, \ell, \sigma,N_\lin)}- {\mathcal{C}}{(\nu, \ell, \sigma)})\widetilde{\psi}_{\max}| \\ &=  \widetilde{\psi}_{\max}^*{\mathcal{C}}{(\nu, \ell, \sigma)}\widetilde{\psi}_{\max}
- \widetilde{\psi}_{\max}^*\widetilde{\mathcal{C}}{(\nu, \ell, \sigma,N_\lin)}\widetilde\psi_{\max} \\ &=  \widetilde\psi_{\max}^*{\mathcal{C}}{(\nu, \ell, \sigma)}\widetilde{\psi}_{\max}
- \widetilde{\lambda}_{\max} \widetilde{\psi}_{\max}^*\widetilde{\psi}_{\max} \\
&\geq |\widetilde{\lambda}_{\max}|.
\end{align*}
This gives the bound
\begin{align*}
\| \widetilde{\mathcal{C}}_{0}{(\nu, \ell, \sigma,N_\lin)}- \widetilde{\mathcal{C}}{(\nu, \ell, \sigma,N_\lin)}\|_X \leq \| \widetilde{\mathcal{C}}{(\nu, \ell, \sigma,N_\lin)}- {\mathcal{C}}{(\nu, \ell, \sigma)} \|_X .
\end{align*}
Finally, using the triangle inequality, we arrive at
\begin{align*}
\| &\widetilde{\mathcal{C}}_0{(\nu, \ell, \sigma,N_\lin)}- {\mathcal{C}}{(\nu, \ell, \sigma)} \|_X \\ &\leq \| \widetilde{\mathcal{C}}_{0}{(\nu, \ell, \sigma,N_\lin)}- \widetilde{\mathcal{C}}{(\nu, \ell, \sigma,N_\lin)}\|_X +  \| \widetilde{\mathcal{C}}{(\nu, \ell, \sigma,N_\lin)}-\mathcal{C}{(\nu, \ell, \sigma)}\|_X \\
&\leq 2\| \widetilde{\mathcal{C}}{(\nu, \ell, \sigma,N_\lin)}-\mathcal{C}{(\nu, \ell, \sigma)}\|_X.
\end{align*}
\end{proof}
\end{appendix}

\section*{Acknowledgements}
The authors would like to thank Barbara Wohlmuth for pointing them to the work \cite{Horger2016} and for many helpful comments. 
JL and EU would like to thank the Isaac Newton Institute for Mathematical Sciences for support and hospitality during the programme \textit{Uncertainty quantification for complex systems: theory and methodologies} when work on this paper was undertaken.
This work was supported by the DFG through the International Graduate School of Science and Engineering at the Technical University of Munich within the project 10.02 BAYES, and by EPSRC Grant Number EP/K032208/1.

\bibliographystyle{plain} 
\bibliography{library_new}





\end{document}